\pdfoutput=1
\documentclass[a4paper, reqno, 12pt]{amsart}
\usepackage[margin=0.8in]{geometry}
\usepackage[utf8]{inputenc}
\usepackage[T1]{fontenc}
\usepackage{lmodern, csquotes, graphicx}
\usepackage{amssymb, physics, bm}
\usepackage{hyperref}
\usepackage[subsetnonamb, bbsets]{jkmath}
\usepackage[toc,page]{appendix}
\usepackage{comment}
\usepackage{enumitem} 
\usepackage[makeroom]{cancel}
\usepackage[backend=bibtex]{biblatex}
\addbibresource{references-2.bib}
\usepackage{dynkin-diagrams}

\newtheorem{theorem}{Theorem}[section]
\newtheorem{definition}[theorem]{Definition}
\newtheorem{lemma}[theorem]{Lemma}
\newtheorem{proposition}[theorem]{Proposition}
\newtheorem{corollary}[theorem]{Corollary}
\newtheorem{example}[theorem]{Example}
\newtheorem{notation}[theorem]{Notation}
\newtheorem{remark}[theorem]{Remark}

\newtheorem{thmintro}{Theorem}

\usepackage{tikz}
\usepackage{tikz-cd}
\usepackage{mathrsfs}
\usetikzlibrary{cd}
\usepackage{float}

\DeclareMathOperator{\Hom}{Hom}

\newcommand{\uq}{\mathbf {U}}

\newcommand{\uqb}{\mathbf{B}}

\newcommand{\uqbs}{\mathbf{B}_{\boldsymbol{c},\boldsymbol{s}}}
\newcommand{\uqds}{\mathbf{B}_{\boldsymbol{d},\boldsymbol{t}}}
\newcommand{\I}{I}

\newcommand{\Addresses}{{
		\bigskip
		\footnotesize
		\textsc{IMAPP, Radboud Universiteit, P.O. Box 9010, 6500 GL Nijmegen, The Netherlands}\par\nopagebreak
		\emph{E-mail address}: \texttt{stein.meereboer@ru.nl}
		
}}
\title{Based morphisms for characters of \mbox{quantum symmetric pairs}}
\author{Stein Meereboer}
\begin{document}
\begin{abstract}
We study based one-dimensional modules of quantum symmetric pairs over the field $\Q(q)$. We provide a complete classification of one-dimensional $\uqb$-modules that appear as submodules of simple finite-dimensional based $\uq$-modules and determine the corresponding branching rules. The main result of this paper shows that the corresponding projections are morphisms of based $\uqb$-modules. To this end we characterize one-dimensional modules at $q=\infty$, thus developing a $\imath$crystal basis theory for these modules.
This is then applied to show compatibility with the integral forms of the (dual-)canonical basis. 
\end{abstract}
\subjclass[2020]{Primary 17B37} 
\keywords{Quantum symmetric pairs, Based modules, Characters} 
	\maketitle
\section{Introduction}

\subsection{Quantum symmetric pairs}
Let $\mathfrak{g}$ be a semisimple Lie algebra over $\C$ and $\Theta:\mathfrak{g}\to \mathfrak{g}$ an involutive Lie-algebra automorphism. The fixed point subalgebra of $\Theta$ is denoted by $\mathfrak{k}=\mathfrak{g}^\Theta$. The pairs $(\mathfrak{g},\mathfrak{k})$ are known as symmetric pairs, in this paper we restrict our attention to the symmetric pairs of Hermitian type.
The theory of quantum symmetric pairs, as developed by Letzter, provides quantum analogs of the pairs of universal enveloping algebras $(U(\mathfrak{g}),U(\mathfrak{k}))$ \cite{Le99}.
Let $(\uq,\uqb)$ be a quantum symmetric pair over the field $\Q(q)$, cf. \cite{Kol14} \cite{Le99}. This pair consists of a Drinfel'd Jimbo quantum group $\uq$ and coideal subalgebra $\uqb\subset \uq$ whose structure is determined by a family of scalars $(\boldsymbol{c},\boldsymbol{s})$.

\subsection{Based $\uqb$-morphisms for the trivial module}

The theory of based $\uqb$-modules provides generalizations of Lusztig's theory of based $\uq$-modules \cite[Definition 6.11]{Bao21}. The existence of the $\imath$canonical basis \cite{Bao18} implies that every finite-dimensional based $\uq$-module is naturally equipped with the structure of a based $\uqb$-module. 

In \cite{Wat24}, Watanabe proved the \emph{stability conjecture of $\imath$canonical bases} \cite[Remark 6.18]{Bao18}. Watanabe's proof of this conjecture is a consequence of \cite[Theorem 4.3.1]{Wat24}, which verified that distinguished $\uqb$-homomorphisms to the trivial module are morphisms of based $\uqb$-modules. 
Denote $\breve{X}^+$ the subset of $X^+$ consisting of spherical weights, see Section \ref{sec:qsp}.

\begin{thmintro}\cite[Theorem 4.3.1]{Wat24}\label{thm:watintro}
Let $\lambda\in \breve{X}^+$. Then there exists a unique morphism as based $\uqb$-modules
\[L(\lambda)\to L(0),\qquad v_\lambda\mapsto 1.\]
\end{thmintro}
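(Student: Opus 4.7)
The plan is to establish existence and uniqueness of $\phi$ as a $\uqb$-morphism, and then upgrade to a based morphism by verifying bar-invariance, preservation of the integral form, and compatibility at $q=\infty$.

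Existence and uniqueness should fall out of sphericity: the condition $\lambda \in \breve{X}^+$ should guarantee that the trivial $\uqb$-module appears in $L(\lambda)$ with multiplicity one and that the highest weight vector $v_\lambda$ has non-zero projection onto the resulting one-dimensional invariant subspace. This immediately produces a unique $\uqb$-morphism $\phi: L(\lambda) \to L(0)$ with $\phi(v_\lambda)=1$. Bar-invariance is then automatic: the $\imath$bar involution $\psi_\imath$ fixes $v_\lambda$ by the standard normalization of the $\imath$canonical basis and acts as the identity on $L(0)$, so $\psi_\imath \circ \phi \circ \psi_\imath$ is a $\uqb$-morphism with the same normalization, and therefore equals $\phi$.

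The main content is the based-morphism property. I would first verify that $\phi$ carries the $\Z[q,q^{-1}]$-integral form of $L(\lambda)$, spanned by the $\imath$canonical basis, into $\Z[q,q^{-1}]\cdot 1 \subset L(0)$. Combined with bar-invariance, and using that a bar-invariant element of $\Z[q,q^{-1}]$ lying in the $q=\infty$ lattice $\Z[q^{-1}]$ must be a constant integer, this reduces the task to controlling the behaviour at $q=\infty$: namely, showing that $\phi$ sends the $\imath$crystal lattice $\mathcal{L}^\imath(\lambda)$ into $\mathcal{L}^\imath(0) = \Z[q^{-1}] \cdot 1$, and that modulo $q^{-1}$ each $\imath$canonical basis element maps to either $0$ or $1$.

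The decisive step, and the main obstacle, is precisely this combinatorics at $q=\infty$. One needs to single out the unique $\imath$canonical basis element whose image is $1$ and show that every other basis element reduces to $0$. This demands an explicit combinatorial description of the trivial $\uqb$-isotypic line inside the $\imath$crystal of $L(\lambda)$, which is exactly what the $\imath$crystal basis theory for one-dimensional modules developed in this paper is designed to provide. Once that theory pinpoints the distinguished basis vector and forces the remaining basis vectors into $q^{-1}\Z[q^{-1}]\cdot 1$ after applying $\phi$, the based-morphism property follows.
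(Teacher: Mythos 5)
First, note that the paper does not prove this statement: it is imported verbatim from Watanabe, and the paper's own contribution is the generalization to nontrivial characters (Theorem \ref{thm:mainthm}), whose proof at $l=0$ recovers exactly this result. Measured against that proof, your architecture is the right one and matches the paper's: existence and uniqueness of the $\uqb$-morphism from multiplicity one of the trivial module in $L(\lambda)$ together with the fact that the spherical vector has the form $v_\lambda+\mathrm{l.o.t.}$ (Proposition \ref{prop:complement}, Lemma \ref{lem:sometechnical}, Theorem \ref{thm:branching}); bar-equivariance by your conjugation-plus-uniqueness argument, which is literally the $a'=1$ computation in the proof of Theorem \ref{thm:rankone}; preservation of $\mathcal A$-forms (Proposition \ref{prop:aform}, using $_\mathcal A L(\lambda)={_\mathcal A\dot{\uqb}}\,v_\lambda$); and then the verification is packaged through the criterion of Lemma \ref{lem:equivbased} rather than through your ``bar-invariant element of $\mathcal A\cap\mathbf{A}_\infty$ is an integer'' argument, but these are interchangeable.

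The gap is that you correctly isolate the decisive step --- the behaviour at $q=\infty$ --- and then do not prove it; saying it is ``exactly what the $\imath$crystal basis theory developed in this paper is designed to provide'' is circular in a blind proof. Concretely, what is needed is not a description of a distinguished $\imath$canonical basis element but the single statement that the normalized spherical vector $f=v_\lambda+\mathrm{l.o.t.}$ satisfies $f\equiv_\infty v_\lambda$ (Lemma \ref{lem:qinfty}); once this is known, the explicit realization of the projection via the $\varrho$-contravariant form, $w\mapsto (w,f)/(f,f)$, immediately yields that $\phi(\mathcal L(\lambda))\subset\mathbf{A}_\infty\cdot 1$ and that each basis element maps to $0$ or $1$ modulo $q^{-1}$. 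The paper proves $f\equiv_\infty v_\lambda$ by showing $\tilde{E}_i f\equiv_\infty 0$ for every $i\in I$ and invoking the crystal-basis characterization of the highest weight vector: for $i\in I_\bullet$ this is because $E_if=F_if=0$, and for $i\in I_\circ$ one restricts to the rank-one subpair $(\uq_i,\uqb_i)$ and uses the explicit rank-one computations of Section \ref{sec:rankone} (in the trivial-character case, Lemma \ref{lem:wat}). This reduction to rank one via Kashiwara operators, together with the explicit rank-one spherical vectors and the constraint on the parameters $(\boldsymbol{c},\boldsymbol{s})$ from Lemma \ref{lem:injective} (without which $f\equiv_\infty v_\lambda$ actually fails), is the missing content of your outline.
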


Theorem \ref{thm:watintro} also formed the foundation for \cite{Bao24}, where it was applied to construct integral forms of symmetric spaces over algebraically closed fields of characteristic not equal to 2. 

\subsection{Results}

The main purpose of this paper is to generalize Theorem \ref{thm:watintro} to nontrivial one-dimensional modules of $\uqb$. Here, we remind the reader that the quantum symmetric pair $(\uq,\uqb)$ is assumed to be a quantization of a symmetric pair of Hermitian type.
In \cite{Mee24}, we
classified the one-dimensional $\uqb$-modules appearing as submodules of finite-dimensional weight $\uq$-modules and determined their branching rules. However, the field of definition is the bigger field $k=\overline{\C(q)}$, the algebraic closure of $\C(q)$. This was needed to import specialization results of \cite{Let00}. It is natural to ask whether the given classification persists over the smaller field $\Q(q)\subset k$. For suitable parameters $(\boldsymbol{c},\boldsymbol{s})$ of the coideal subalgebra $\uqb$ we give an affirmative answer.
\begin{thmintro}[Proposition \ref{prop:fieldextend}
]\label{thmintro}
Let $l\in \Z$. Then there exists a character $\chi_l:\uqb\to \Q(q)$ and a unique weight $\mu_l\in X^+$ such that
	\[[L(\lambda)|_\uqb: \chi_l]=\begin{cases}
		1&\text{if\,\,}\lambda=\mu_l+\mu\text{ with }\mu\in \breve{X}^+\\
		0&\text{else}
	\end{cases}.\]
Moreover, the characters $\chi_l$ are pairwise inequivalent and provide a classification of the one-dimensional $\uqb$-modules that appear as submodules of finite-dimensional weight $\uq$-modules.
\end{thmintro}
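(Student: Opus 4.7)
The plan is to bootstrap from the classification over $k=\overline{\C(q)}$ obtained in \cite{Mee24} and perform a descent to $\Q(q)$ under the ``suitable'' assumption on $(\boldsymbol{c},\boldsymbol{s})$. Since $(\mathfrak{g},\mathfrak{k})$ is of Hermitian type, the center of $\mathfrak{k}$ is one-dimensional, so the $k$-classification produces a family of one-dimensional $\uqb$-submodules of finite-dimensional weight $\uq$-modules naturally indexed by $\Z$; I denote the associated $k$-characters by $\widetilde{\chi}_l$, each attached to a minimal weight $\mu_l\in X^+$ with multiplicity-one branching on $L(\mu_l+\mu)$ for $\mu\in\breve{X}^+$.

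The first real step is the descent itself. Because $\uqb$ is generated, over its Cartan part, by the coideal generators $B_i$ together with certain group-like elements, checking that $\widetilde{\chi}_l$ takes values in $\Q(q)$ reduces to a finite computation on these generators. The eigenvalue formulas in \cite{Mee24} depend rationally on the parameters $(\boldsymbol{c},\boldsymbol{s})$; taking the ``suitable'' parameters to lie in $\Q(q)$, each value $\widetilde{\chi}_l(B_i)$ already lies in $\Q(q)$, yielding a character $\chi_l:\uqb\to\Q(q)$ whose base change to $k$ is $\widetilde{\chi}_l$.

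Having $\chi_l$ defined over $\Q(q)$, the branching multiplicities in the statement are inherited from \cite{Mee24}: for each $\lambda\in X^+$ the space $\Hom_\uqb(L(\lambda),\chi_l)$ is the solution set of a system of $\Q(q)$-linear equations whose base change to $k$ computes the $\widetilde{\chi}_l$-multiplicity in the same module, so the $\Q(q)$- and $k$-multiplicities agree and equal $1$ or $0$ according to whether $\lambda-\mu_l\in\breve{X}^+$. Uniqueness of $\mu_l$ follows from minimality in the dominance order, together with the fact that $\breve{X}^+$ contains no nontrivial dominance-negative elements. Pairwise inequivalence is detected by evaluating $\chi_l$ on a distinguished element realizing the central generator of $Z(\mathfrak{k})$, whose eigenvalue separates the parameter $l$, and completeness descends from the $k$-classification since any $\Q(q)$-character appearing inside a weight $\uq$-module base-changes to some $\widetilde{\chi}_l$.

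The main obstacle I anticipate is the descent step itself: precisely isolating the class of parameters $(\boldsymbol{c},\boldsymbol{s})$ for which the characters of \cite{Mee24} become $\Q(q)$-rational requires reworking the specialization arguments there, which relied on $k$ being algebraically closed in essential ways. I expect the verification to come down to identifying a finite set of algebraic expressions in $q$ arising in the character values on the $B_i$ and imposing conditions on $(\boldsymbol{c},\boldsymbol{s})$ that force each such expression to lie in $\Q(q)$, after which the remaining assertions of the theorem follow by transport from the $k$-theory.
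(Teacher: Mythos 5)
Your proposal is correct in outline and, for the existence and classification of the characters, follows essentially the same route as the paper: both start from the $k=\overline{\C(q)}$ classification of \cite{Mee24}, observe that for the distinguished parameters the character values on the generators ($[l-n]_i$ on $B_i$, $q_i^l$ on $K_iK_{\tau i}^{-1}$, $\epsilon$ on $\uq_\bullet$) already lie in $\Q(q)$, and restrict; the "obstacle" you flag at the end is resolved in the paper simply by fixing the parameters of Remark \ref{rem:par} in advance, so no reworking of the specialization arguments of \cite{Mee24} is needed. Where you genuinely diverge is in transferring the branching multiplicities. You argue that $\Hom_{\uqb}(L(\lambda),V_{\chi_l})$ is the kernel of a matrix over $\Q(q)$ whose base change computes the $k$-multiplicity, so the two dimensions coincide; this is a valid and more elementary argument than the paper's (it only needs that $\uqb$ is finitely generated, $L(\lambda)$ finite-dimensional, and that $V_{\chi_l}\otimes_{\Q(q)}k\cong V_{\chi_l^k}$ by construction). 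The paper instead proves only $[L(\lambda)|_{\uqb}:\chi_l]\leq 1$ directly and then \emph{constructs} the $\uqb$-homomorphism $L(\lambda)\to V_{\chi_l}$ over $\Q(q)$ by showing $V_{\chi_l}$ is an integrable $\uqb$-module in Watanabe's sense and invoking \cite[Proposition 4.1.1 \& Theorem 4.2.6]{Wat2025} (Lemma \ref{lem:integrable} and Theorem \ref{thm:branching}). What the heavier route buys is reused downstream: the integrability of $V_{\chi_l}$ and the explicit homomorphism $v_{\mu_l+\lambda}\mapsto 1$ are exactly what is needed to control $\mathcal A$-forms (Proposition \ref{prop:aform}) and to prove that the projection is a \emph{based} morphism (Theorem \ref{thm:mainthm}), which a pure dimension count cannot provide. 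Two small points to tighten: the uniqueness of $\mu_l$ should be phrased as "if $\mu_l=\mu_l'+\mu$ and $\mu_l'=\mu_l+\mu'$ with $\mu,\mu'\in\breve{X}^+$ then $\mu+\mu'=0$, forcing $\mu=\mu'=0$"; and pairwise inequivalence is read off directly from the listed values on $B_i$ resp. $K_iK_{\tau i}^{-1}$ rather than from an unspecified element realizing the center of $\mathfrak{k}$.
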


Consequently, for each $l\in \Z$, we obtain a one-dimensional module $V_{\chi_l}=\mathrm{span}_{\Q(q)}\{1\}$ satisfying
\[b1=\chi_l(b)1,\qquad \text{where}\qquad b\in \uqb.\]

\begin{thmintro}[Theorem \ref{thm:mainthm}]\label{thm:intromain}
	Let $\lambda\in \breve{X}^+$ and let $l\in \Z$. Then there exists a unique morphism as based $\uqb$-modules
	$$L(\mu_l+\lambda) \to V_{\chi_l},\qquad \qquad  v_{\mu_l+\lambda}\mapsto 1.$$
\end{thmintro}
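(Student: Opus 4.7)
The plan is to upgrade the abstract $\uqb$-module map produced by Theorem \ref{thmintro} to a morphism of based $\uqb$-modules by verifying the three standard compatibilities of the based framework: bar-invariance, preservation of the integral form over $\mathbb{Z}[q,q^{-1}]$, and the correct asymptotic behaviour at $q=\infty$ relative to the $\imath$canonical basis. The existence and uniqueness of the underlying $\uqb$-module map is immediate from Theorem \ref{thmintro}: since $[L(\mu_l+\lambda)|_{\uqb}:\chi_l]=1$, a nonzero $\uqb$-equivariant map $L(\mu_l+\lambda)\to V_{\chi_l}$ is unique up to scalar, and the normalisation $v_{\mu_l+\lambda}\mapsto 1$ pins it down; denote it by $\pi$.

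For bar-invariance, I equip $V_{\chi_l}$ with the $\imath$bar involution fixing $1$. Since the highest-weight vector $v_{\mu_l+\lambda}$ is bar-invariant in $L(\mu_l+\lambda)$, the map $\pi$ intertwines the two involutions on the generator, and $\uqb$-equivariance together with compatibility of the $\imath$bar involution propagates this to the whole module. Integrality and the crystal-limit condition form the substantive part: one identifies the $\imath$crystal basis of $V_{\chi_l}$ using the $\imath$crystal theory for one-dimensional modules developed in this paper, and then shows that (a) $\pi$ carries the $\mathbb{Z}[q,q^{-1}]$-integral form of $L(\mu_l+\lambda)$ into $\mathbb{Z}[q,q^{-1}]\cdot 1$, and (b) at $q=\infty$, each $\imath$canonical basis element of $L(\mu_l+\lambda)$ projects either to $0$ or to the distinguished generator of the $\imath$crystal basis of $V_{\chi_l}$. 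Combined with bar-invariance, these conditions force the image of every $\imath$canonical basis vector to lie in $\{0,\pm 1\}$, which is precisely the definition of a based morphism.

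The hard step is (b). For the trivial character $\chi_0$ this is Watanabe's Theorem \ref{thm:watintro}; for general $l$, however, the values of $\chi_l$ involve powers of $q$ that obstruct any direct reduction, and one must first pin down the correct $\imath$crystal structure on $V_{\chi_l}$, ensuring that its natural generator is simultaneously bar-invariant and has the right asymptotic form. The $\imath$crystal theory for one-dimensional modules highlighted in the abstract is exactly the machinery designed to resolve this: it produces a canonical bar-invariant generator of $V_{\chi_l}$ together with compatible Kashiwara-type operators, after which an inductive argument down from the highest weight $v_{\mu_l+\lambda}$—parallelling Watanabe's proof in the trivial case but running over the new $\imath$crystal data—completes the verification.
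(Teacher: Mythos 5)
Your overall strategy coincides with the paper's: both reduce the problem to verifying the criteria for a based $\uqb$-morphism (Lemma \ref{lem:equivbased}) --- preservation of crystal lattices, preservation of $\mathcal A$-forms, and commutation with the $\imath$bar involutions --- for the unique $\uqb$-homomorphism supplied by the multiplicity-one branching rule. Your treatment of bar-invariance and of the $\mathcal A$-form is essentially what the paper does (the latter via $_\mathcal A L(\mu_l+\lambda)={_\mathcal A\dot{\uqb}}\,v_{\mu_l+\lambda}$ together with the fact that $V_{\chi_l}$ is itself a based $\uqb$-module; see Proposition \ref{prop:aform}).

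There is, however, a genuine gap at exactly the step you flag as the hard one. You appeal to ``the $\imath$crystal theory for one-dimensional modules'' as a black box yielding that the spherical vector becomes the highest weight vector at $q=\infty$, and you sketch ``an inductive argument down from the highest weight.'' This is precisely the content of Lemma \ref{lem:qinfty}, and it cannot be obtained by formal propagation from $v_{\mu_l+\lambda}$: the paper proves it by showing $\tilde{E}_i w_0\equiv_\infty 0$ for every $i\in I$, reducing for $i\in I_\circ$ to the rank one subalgebras of Definition \ref{def:rank}, and the rank one input rests on an explicit computation of the minimal spherical vectors (e.g.\ $f_{1,n}=v_1+q^{-n}v_2$ in type $\mathsf{AI}$), assembled into general $f_{l,n}$ by the shift-of-basepoint tensor construction (Lemma \ref{lem:shift}) and the Cartan projection (Lemma \ref{lem:preserveaform}). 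Crucially, that computation shows the crystal-limit statement is \emph{false} for general parameters: $f_{1,n}\equiv_\infty v_1$ holds only under the sign conditions on $n$ recorded in Lemma \ref{lem:injective}, and the theorem is proved under the standing assumption of Remark \ref{rem:parameters}. Your argument never invokes, and could not succeed without, this restriction on $(\boldsymbol{c},\boldsymbol{s})$; a parameter-blind argument would prove a false statement. (A smaller imprecision: a based morphism must send basis elements into $B\cup\{0\}$, not $\{0,\pm 1\}$; the sign is eliminated by the normalisation at the highest weight vector, as in the paper's verification of condition (iii) of Lemma \ref{lem:equivbased}.)
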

This generalizes \cite[Theorem 4.3.1]{Wat24} to non-trivial one-dimensional modules.
In order to prove Theorem \ref{thm:intromain} we follow the strategy outlined in  \cite{Wat24} and \cite{Wat23}. As in \cite{Wat23}, we need to understand
the one-dimensional modules at $q= \infty$. 
We characterize the  $\uqb$-modules $V_{\chi_l}$
in each highest weight $\uq$-module $L(\lambda)$ in terms of its (ordinary) crystal base. 
Following the approach in \cite{Wat24}, we reduce the general case to the rank one case. 
In the rank one case, we apply Theorem \ref{thm:watintro} to further reduce the problem to characterizing the one-dimensional modules $V_{\chi_l}$ at $q=\infty$ in its minimal realization $L(\mu_l)$. 
This final step is done by direct computation. 
The most important part in the final step is that non-trivial characters appear only in limited situations; there are just two cases to consider. Although in these cases there are infinitely many inequivalent characters, their behavior at $q=\infty$ can be deduced from the minimal cases $\chi_1$ and $\chi_{-1}$. 
Therefore, we only need to perform a small amount of case-by-case computation to adopt results from Watanabe.

We write $v=v_\lambda+\mathrm{l.o.t.}$ in the highest weight module $L(\lambda)$ to indicate that $\mathrm{l.o.t.}=\sum_{\mu<\lambda}v_\mu$ for weight vectors $v_\mu\in L(\lambda)_\mu$. Let $\mathcal A=\Z[q,q^{-1}]$ and denote $_\mathcal A L(\lambda)$ the integral form of $L(\lambda)$ arising from the integral form of $\uq$.  Let $\lambda \in \breve{X}^+$ and $l\in \Z$. By Theorem \ref{thm:intromain} there exists a vector $f^\lambda_{l}=v_\lambda+\mathrm{l.o.t.}\in L(\mu_l+\lambda)$ spanning a module of type $\chi_l$. As an application of Theorem \ref{thm:mainthm} we show that the vectors $ f^\lambda_{l}$ are compatible with integral forms.

\begin{thmintro}[Theorem \ref{lem:integral1}]
	Let $l\in \Z$. Then $f^0_{l}=v_\lambda+\mathrm{l.o.t}\in {_\mathcal AL(\mu_l)}$
\end{thmintro}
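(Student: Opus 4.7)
The plan is to deduce integrality from bar-invariance combined with membership in Kashiwara's crystal lattice, via the standard characterization of ${_\mathcal A}L(\mu_l)$ as the $\mathcal A$-span of the canonical basis of $L(\mu_l)$. Concretely, the goal reduces to showing that the canonical basis coefficients of $f^0_l$ all lie in $\mathcal A$.

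First I would establish bar-invariance of $f^0_l$. By Theorem \ref{thmintro} the vector $f^0_l \in L(\mu_l)$ is uniquely characterized (up to scalar) by the two conditions that it is of the form $v_{\mu_l}+\lot$ and spans a $\uqb$-submodule of type $\chi_l$. The bar involution of $L(\mu_l)$ fixes the highest weight vector $v_{\mu_l}$, and the $\imath$bar involution on $\uqb$ fixes the character $\chi_l$ (this compatibility, for the Hermitian-type characters $\chi_l$, is already needed in the proof of Theorem \ref{thm:mainthm}). It follows that $\overline{f^0_l}$ satisfies the same two characterizing properties, so by uniqueness $\overline{f^0_l}=f^0_l$.

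Next I would show that $f^0_l$ lies in Kashiwara's crystal lattice $\mathcal L(\mu_l)\subset L(\mu_l)$. This is precisely where the $\imath$crystal basis theory developed in the body of the paper intervenes: the characterization of $V_{\chi_l}$ inside $L(\mu_l)$ at $q=\infty$ identifies the class of $f^0_l$ modulo $q^{-1}\mathcal L(\mu_l)$ with a distinguished crystal basis element, and in particular places $f^0_l$ in $\mathcal L(\mu_l)$. Combined with bar-invariance, the Kashiwara--Lusztig characterization of the canonical basis then gives that $f^0_l$ is a $\Z$-linear combination of canonical basis elements, which is exactly the assertion $f^0_l\in {_\mathcal A}L(\mu_l)$.

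The main obstacle is the second step: extracting from the $\imath$crystal basis theory the concrete statement that $f^0_l$ lies in the ordinary crystal lattice of $L(\mu_l)$, rather than merely in an $\imath$-twisted analog. This requires a careful comparison of the two lattice structures, and controlling the lower-order terms of $f^0_l$ by invoking the reduction to rank one from the proof of Theorem \ref{thm:mainthm} together with the explicit behavior of the minimal characters $\chi_{\pm 1}$, from which the general case is deduced in the paper's strategy.
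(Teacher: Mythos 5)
There is a genuine gap here, in fact two. First, $f^0_l$ is \emph{not} fixed by the ordinary bar involution of $L(\mu_l)$. The coideal subalgebra $\uqb$ is not stable under $\overline{\,\cdot\,}$ (the generators $B_i$ are not bar-invariant in $\uq$), so $\overline{f^0_l}$ need not span a $\uqb$-module of type $\chi_l$, and your uniqueness argument does not apply to it. The invariance $f^0_l$ actually enjoys, by Lemma \ref{lem:sometechnical}, is under the $\imath$bar involution $\psi^\imath=\Upsilon\circ\overline{\,\cdot\,}$. Already in rank one type $\mathsf{AI}$ one has $f_{1,n}=v_1+q^{-n}v_2$, and $\overline{f_{1,n}}=v_1+q^{n}v_2\neq f_{1,n}$ for $n\neq 0$; bar-invariance would in fact force $f^0_l=v_{\mu_l}$ exactly, which is false. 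Second, even after replacing $\overline{\,\cdot\,}$ by $\psi^\imath$ and $B(\mu_l)$ by $B^\imath(\mu_l)$, your last step is circular: the Kashiwara--Lusztig characterization identifies ${_\mathcal A}L\cap\mathcal L\cap\overline{\mathcal L}$ (bar-invariant part) with the $\Z$-span of $\mathcal B$, i.e.\ it takes $\mathcal A$-integrality as a \emph{hypothesis}. From $\psi^\imath$-invariance together with $f^0_l\in\mathcal L(\mu_l)$ you only learn that the coefficients of $f^0_l$ in the basis $B^\imath(\mu_l)$ lie in $\mathbf{A}_\infty\cap\overline{\mathbf{A}_\infty}$, and this ring is strictly larger than $\Q$ (for instance $(q^3+q)/(q^4+1)$ belongs to it), so no integrality can be extracted.

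The paper's proof circumvents both problems by a different mechanism: since the based projection $\pi:L(\mu_l)\to V_{\chi_l}$ preserves $\mathcal A$-forms (Proposition \ref{prop:aform}), dualizing places $f^0_l$ in the \emph{dual} integral form ${_\mathcal A}L^{\mathrm{up}}(\mu_l)$ (Lemmas \ref{lem:inclusion} and \ref{lem:dualcan}); for $l=\pm1$ the weight $\mu_{\pm 1}$ is quasi-minuscule, so ${_\mathcal A}L^{\mathrm{up}}$ and ${_\mathcal A}L$ agree outside the zero weight space and hence $f^0_{\pm1}\in{_\mathcal A}L(\mu_{\pm1})$; the general $l$ is then reached by writing $f^0_{el}$ as the Cartan projection of a tensor product of the minimal spherical vectors and invoking the triangularity of the canonical basis of tensor products. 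If you want to salvage your route, you must first establish the $\mathcal A$-integrality by independent means, at which point $\psi^\imath$-invariance and the crystal limit $f^0_l\equiv_\infty v_{\mu_l}$ (Lemma \ref{lem:qinfty}) would identify $f^0_l$ with the $\imath$canonical basis element attached to the highest weight vector --- but that integrality is precisely the content of the theorem.
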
 

The proof of this result is based on the following remark: If $\pi: {_\mathcal A} L(\lambda)\to {_\mathcal A V_{\chi_l}}$ is an $\mathcal A$-linear map then $\pi^\ast({_\mathcal A V_{\chi_l}}^\ast) \subset {_\mathcal A L}(\lambda)^\ast$. In the literature the $\mathcal A$-module ${_\mathcal A L}(\lambda)^\ast$ is known as the \emph{integral form of the dual canonical basis.} We can identify $L(\lambda)$ with its dual using a bilinear form. 
We denote the identification of ${_\mathcal A L}(\lambda)^\ast$ by $_\mathcal A^{\mathrm{up}}L(\lambda)$, cf. \cite[(4.2.6)]{Kas93}. This identification often leads to a strict inclusion $_\mathcal A L(\lambda)\subsetneq {_\mathcal A^{\mathrm{up}}L(\lambda)}$. However, when the weight $\lambda$ is (quasi-)minuscule, we have an equality (up to the zero weight space), cf. \cite[\S 9.21]{Jan96}. This equality allows us to identify the \emph{minimal} spherical vectors in the integral form, which can be extended to all characters using tensor products.
\begin{center}
	$\textsc{Acknowledgment}$
\end{center}
The paper was inspired by conversations with Huanchen Bao during my visit to the NUS. I would to like to Huanchen Bao for the
discussions and the hospitality. Moreover, I would like to thank Erik Koelink for corrections and valuable comments.
This work is funded by grant number \texttt{OCENW.M20.108} of the
Dutch Research Council. 
\section{Quantum groups and and based modules}\label{Sec:QG}
\subsection{Quantum groups}
We introduce notation of quantum groups as in \cite{Lus10}. Let $\mathfrak{g}$ be a complex semisimple Lie algebra with  Cartan matrix $C=(c_{ij})_{i,j\in \I}$. Let $D=\operatorname{diag}(d_i\,:\,d_i\in \Z_{\geq 1}, \,i\in\I)$ be a symmetrizer, meaning that $DC$ is symmetric and $\gcd\{d_i\,:\, i\in\I\}=1$. We fix a set of simple roots $\Pi=\{\alpha_i\,:\,i\in \I\}$ and a set of simple coroots $\Pi^\vee=\{\alpha^\vee_i\,:\,i\in \I\}$ with corresponding root and coroot systems $\mathcal R$ and $\mathcal R^\vee$. The root lattice is denoted by $\Z\I=\oplus_{i\in \I } \Z\alpha_i$. The root lattice is equipped with the normalized Killing form $(\,,\,)$, scaled so that short roots have squared length $2$. The Weyl group $W$ is generated by the simple reflections $s_i:\Z\I\to \Z\I$, for $i\in \I$ defined by $s_i(\alpha_j)=\alpha_j-c_{ij}\alpha_i,$ for $j\in \I$. 

Let $q$ be an indeterminate, and let $\Q(q)$ denote the field of rational functions in $q$ with coefficients in $\Q$. Let  $\overline{\,\cdot \,}:\Q(q)\to \Q(q)$ be the unique $\Q$-linear automorphism such that $q$ is mapped to $q^{-1}$.  For each $i\in\I$ and $n\in \Z_{\geq0}$, set
\[q_i:= q^{d_i},\qquad [n]_i:=\cfrac{q_i^n-q_i^{-n}}{q_i-q_i^{-1}}\qquad\text{and}\qquad [n]_{i}!:=\prod_{k=1}^n[k]_i.\]
Recall $\mathcal A=\Z[q,q^{-1}]$ and let $\mathbf{A}_\infty=\Q(q)\cap \Q[[q^{-1}]]$, the latter consists of all rational functions regular at $q= \infty$.

Let $(Y,X, \langle \,,\, \rangle.\dots)$ be a root datum of type $(\I,\, (\,,\,))$, cf. \cite[\mbox{2.2}]{Lus10}.  By definition, there are embeddings $\I\to X$, $i\mapsto \alpha_i$ and $\I\to Y$, $i\mapsto \alpha_i^\vee$.  The pairing $\langle\,,\,\rangle$ and the form $(\,,\,)$ are related by the formula $\langle \alpha_i^\vee,\alpha_j\rangle=\frac{2(\alpha_i,\alpha_j)}{(\alpha_i,\alpha_i)}$, for $i,j\in \I$. The lattice $X$ is partially ordered by the rule
$$\lambda\leq\lambda'\quad\text{if and only if}\quad \lambda'-\lambda\in \Z_{\geq0}\I.$$
\begin{definition}[Drinfel'd Jimbo quantum group]
The \emph{quantum group} $\uq$, associated to the root datum, is the unital associative algebra over $\Q(q)$, generated by the symbols $E_i,F_i$ for $i\in \I$ and $K_{h}$ for $h\in Y$. These generators satisfy the following relations for $ i,j\in\I,$ and $ h,h_1,h_2\in Y$
\begin{align*}
	K_0&=1,\\
	K_{h_1}K_{h_2}&=K_{h_1+h_2},\\
	K_{h}E_i&= q^{\langle h,\alpha_i\rangle}E_iK_h,\\
	K_{h}F_i&= q^{-\langle h,\alpha_i\rangle}F_iK_h,\\
	E_iF_j-F_jE_i&=\delta_{i,j}\cfrac{K_i-K_i^{-1}}{q_i-q_i^{-1}},
\end{align*}
as well as the quantum Serre relations for $i\neq j$,

\begin{align*}
	\sum_{r+s=1-a_{i,j}}(-1)^sE_i^{(r)}E_jE_i^{(s)}=\sum_{r+s=1-a_{i,j}}(-1)^s F_i^{(r)}F_jF_i^{(s)}=0.
\end{align*}
Here we use the notation
$$K_i:=K_{d_i\alpha_i^\vee},\qquad E_i^{(n)}:=\frac{1}{[n]_i!}E_i^n\qquad\text{and}\qquad F_i^{(n)}=\frac{1}{[n]_i!}F_i^n.$$
\end{definition}
The quantum group $\uq$ has the structure of a Hopf-algebra $(\uq, \triangle,\epsilon,\iota,S)$, where we take the conventions of the Hopf-algebra structure as given in \cite[\mbox{Ch 4}]{Jan96}. 
We use \emph{Sweedler notation} to denote the coproduct, writing $\triangle(x)=x_{(1)}\otimes x_{(2)}\in \uq\otimes \uq$. The \emph{bar involution} $\overline{\,\cdot\,}: \uq\to\uq$ is the unique $\Q$-linear automorphism with
\[E_i\mapsto E_i,\quad F_i\mapsto F_i,\quad K_i\mapsto K_i^{-1},\quad q\mapsto q^{-1},\quad \text{for all }i\in I,\]
this is well-defined by \cite[\S3.1.12]{Lus10}.
Set $_\mathcal A\uq$ equal to the $\mathcal A$-subalgebra generated $K_h$ with $h\in Y$ and by all divided powers 
$F^{(a)}_i = F^a_i /[a]_i!$, $E^{(a)}_i = E^a_i /[a]_i!$ for $a\geq 0$ and $i\in I$.
\subsection{Based modules}\label{sec:base}

A $\uq$-module $M$ is called a \emph{weight} $\uq$-module if
\[M=\bigoplus_{\lambda\in X}M_{\lambda},\qquad \text{where}\qquad M_\lambda=\{m\in M\,:\, K_h m=q^{\langle h,\lambda\rangle}m\,\text{for all }h\in Y\}.\]
Moreover, we say that a weight module is \emph{integrable} if for each $i\in I$, the Chevalley generators $E_i$ and $F_i$ act locally nilpotent on $M$.
The simple left highest weight $\uq$-module corresponding to a dominant weight $\lambda\in X^+$ is denoted by $L(\lambda)$ cf. \cite[\mbox{3.5}]{Lus10}. We write $v_\lambda$ for the highest weight vector and $B(\lambda)$ for the corresponding canonical basis, cf. \cite[\S 14.4.12]{Lus10}.
\begin{definition}[Based module]
	A \emph{based module} is a pair $(M,B)$, consisting of a weight $\uq$-module $M$ and a basis $B$ of $M$ satisfying
	\begin{enumerate}
		\item For each $\lambda \in X$ the set $B\cap M_\lambda$ is a basis of $M_\lambda$.
		\item The $\mathcal A$-submodule $_\mathcal AM$ of  $M$ spanned by $B$ is stable under the action of $_\mathcal A\uq $.
		\item The $\Q$-linear endomorphism $\overline{\,\cdot\,}$ on $M$ , defined by $\overline{q^nb}= q^{-n}b$ for all $n \in \Z$ and
		$b \in B$, is compatible with the bar involution on $\uq$:
		\[\overline{xm}=\overline{x}\overline{m},\qquad \text{where }\qquad x\in \uq,m\in M.\]
		\item Set $\mathcal L := \mathbf{A}_\infty B$ and $\mathcal B := \{b + (q^{-1})\mathcal L \,: \,b \in B\}$. Then, the set $\mathcal B$ is a $\Q$-basis of $\mathcal L/q^{-1}\mathcal L$.
	\end{enumerate}
\end{definition}

\begin{example}
For each $\lambda\in X^+$ the pair $(L(\lambda),B(\lambda))$ is based, cf. \cite[\S 27.1.4]{Lus10}. 
\end{example}

\begin{definition}[Based submodules and based morphisms]\label{def:basedmor}Let $(M,B)$ be a based module. A $\uq$-submodule $N\subset M$ is called a \emph{based submodule} if $N$ is spanned by a subset $B_N\subset B$. 
We remark that $(N,B_N)$ is a based $\uq$-module. In this case, the quotient $(M/N, B\setminus B_N+N)$ is a based $\uq$-module. 

Let $(M_1,B_1)$ and $(M_2,B_2)$ be based $\uq$-modules and $f: M_1\to M_2$ a homomorphism of $\uq$-modules. Then $f$ is called a morphism of based $\uq$-modules if $f(B_1)\subset B_2\cup \{0\}$ and $\ker (f)$ is a based submodule of $M_1$.
\end{definition}

\begin{example}\label{ex:tensor}
Let $\lambda,\mu \in X^+$. By \cite[Theorem 27.3.2]{Lus10}, the canonical basis \[B=\{ b_1\diamondsuit b_2 : b_1\in B(\lambda), b_2\in B(\mu)\}\]
of $ L(\lambda)\otimes L(\mu)$ satisfies a triangularity condition;  
\begin{equation}\label{eq:triangular}
	b_1\diamondsuit b_2=b_1\otimes b_2+\sum _{(b_1, b_2)>(b_1', b_2')}a_{b_1', b_2'}b_1'\otimes b_2'\qquad \text{for scalars}\qquad a_{b_1', b_2'}\in q^{-1}\Z[q^{-1}] .
\end{equation}
Here $>$ denotes a partial ordering on the Cartesian product of the canonical basis, cf. \cite[\S 27.3]{Lus10}. 
\end{example}

\begin{notation}\label{not:cartan}
Let $\lambda,\mu\in X^+$, then we denote $\pi_{\lambda+\mu}: L(\lambda)\otimes L(\mu)\to L(\lambda+\mu)$ the $\uq$-module morphism known as the Cartan projection.
\end{notation}

\begin{notation}\label{not:infty}
Let $v,w\in \mathcal L$, then we write $v\equiv_\infty w$ if $v-w\in q^{-1}\mathcal L$.
\end{notation}

\begin{lemma}\label{lem:preserveaform}
Let $\lambda,\mu \in X^+$
and let $v\in  {\mathcal L}(\lambda)$, $w\in {\mathcal L}(\mu)$ with $v\equiv_{\infty} v_\lambda$, $w\equiv_{\infty} v_\mu$. 
Then $\pi_{\lambda+\mu}(v\otimes w)\equiv_{\infty} v_\lambda\otimes v_\mu$.
\end{lemma}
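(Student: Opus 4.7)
The plan is to reduce the statement to the basedness of the Cartan projection $\pi_{\lambda+\mu}$, which itself is a standard consequence of Example~\ref{ex:tensor} together with Lusztig's compatibility of the tensor-product canonical basis with the isotypic decomposition \cite[\S27.3]{Lus10}.

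First, I would invoke Kashiwara's tensor-product theorem, which identifies the crystal lattice of $L(\lambda)\otimes L(\mu)$ at $q=\infty$ with $\mathcal L(\lambda)\otimes \mathcal L(\mu)$. Writing $v=v_\lambda+r$ and $w=v_\mu+s$ with $r\in q^{-1}\mathcal L(\lambda)$ and $s\in q^{-1}\mathcal L(\mu)$, the expansion
\[v\otimes w=v_\lambda\otimes v_\mu+\bigl(v_\lambda\otimes s+r\otimes v_\mu+r\otimes s\bigr)\]
places the parenthesized remainder inside $q^{-1}(\mathcal L(\lambda)\otimes \mathcal L(\mu))$. Hence $v\otimes w\equiv_\infty v_\lambda\otimes v_\mu$ inside $L(\lambda)\otimes L(\mu)$.

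Next, the weight space $(L(\lambda)\otimes L(\mu))_{\lambda+\mu}$ is one-dimensional and spanned by the bar-invariant element $v_\lambda\otimes v_\mu$, so the triangularity \eqref{eq:triangular} degenerates to $v_\lambda\diamondsuit v_\mu=v_\lambda\otimes v_\mu$. Consequently, the $\uq$-submodule generated by $v_\lambda\otimes v_\mu$ is a based submodule isomorphic to $L(\lambda+\mu)$, and the Cartan projection $\pi_{\lambda+\mu}$ becomes a morphism of based $\uq$-modules sending $v_\lambda\otimes v_\mu\mapsto v_{\lambda+\mu}$ and sending $\mathcal L(\lambda)\otimes\mathcal L(\mu)$ into $\mathcal L(\lambda+\mu)$.

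Applying the $\Q(q)$-linear map $\pi_{\lambda+\mu}$ to the congruence from the second paragraph then yields $\pi_{\lambda+\mu}(v\otimes w)\equiv_\infty v_{\lambda+\mu}$, matching the stated conclusion once $v_\lambda\otimes v_\mu$ is identified with $v_{\lambda+\mu}$ via the based inclusion. The only substantive point is the basedness of $\pi_{\lambda+\mu}$, i.e.\ its compatibility with the crystal lattice; once this is invoked from Lusztig's setup, the remainder is a routine expansion modulo $q^{-1}\mathbf{A}_\infty$, and I do not anticipate a further obstacle.
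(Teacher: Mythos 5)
Your argument is essentially the paper's proof: expand to get $v\otimes w - v_\lambda\otimes v_\mu\in q^{-1}\mathcal L(\lambda)\otimes\mathcal L(\mu)$, identify this lattice with the $\mathbf{A}_\infty$-span of the tensor-product canonical basis via the triangularity \eqref{eq:triangular}, and push the congruence through the Cartan projection using \cite[Proposition 27.1.7]{Lus10}. One caveat on wording: $\pi_{\lambda+\mu}$ is not literally a morphism of based $\uq$-modules in the sense of Definition \ref{def:basedmor}, since its kernel (the sum of the lower isotypic components) is in general not spanned by a subset of the canonical basis and the $\mathcal A$-form is not preserved --- already for $L(1)\otimes L(1)$ in type $\mathsf{A_1}$ the projection of a weight-zero canonical basis element acquires a factor $\frac{1}{[2]}\notin\mathcal A$. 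What is true, and what both you and the paper actually use, is the weaker statement that the projection onto the top isotypic component preserves the crystal lattice $\mathcal L$ and fixes $v_\lambda\otimes v_\mu$; since you explicitly reduce your ``basedness'' claim to exactly this lattice compatibility, the argument goes through.
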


\begin{proof}
We recall the canonical basis $B=\{ b_1\diamondsuit b_2 : b_1\in B(\lambda), b_2\in B(\mu)\}$ of $M=L(\lambda)\otimes L(\mu)$ introduced in Example \ref{ex:tensor}.
We note that
\[v\otimes w-v_\lambda\otimes v_{\mu}\in q^{-1}\mathcal L(\lambda)\otimes \mathcal L(\mu).\]
By triangularity \eqref{eq:triangular} of the canonical base of $M$ and \cite[Proposition 27.1.7]{Lus10}, it follows that $\pi_{\lambda+\mu}(v\otimes w)\equiv_\infty v_\lambda\otimes v_{\mu}$.
\end{proof}

The dual canonical basis of Lusztig, also known as Kashiwara's upper global basis, has several constructions and interpretations. In this section we introduce the dual canonical basis of highest weight modules.
Set $\varrho:\uq\to \uq$ equal to the unique involutive algebra anti-automorphism with
\begin{equation}\label{eq:varrho}
E_i\mapsto  q_iK_iF_i, ,\qquad F_i\mapsto q_iK^{-1}_i E_i,\qquad K_h\mapsto K_h,\qquad \text{where}\qquad i\in I, h\in Y.\end{equation}
Let $\lambda \in X^+$. By \cite[Proposition 19.1.2]{Lus10}, there exists a unique non-degenerate symmetric $\varrho$-contravariant bilinear form $(\,,\,):L(\lambda)\times L(\lambda)\to \Q(q)$ with $(v_\lambda,v_\lambda)=1$. For a left $\uqb$-module $M$, we consider the dual $M^\ast=\Hom(M,\Q(q))$ as a right $\uqb$-module via
\[[f\triangleleft x](v)=f(xv),\qquad \text{where}\qquad f\in M^\ast ,x\in \uq, v\in M.\]
The preceding bilinear form induces an isomorphism of right $\uq$-modules $L(\lambda)^\ast \cong {^\varrho L(\lambda)}$ via
\begin{equation}\label{eq:vectiso}
v\mapsto (w\mapsto (v,w)),\qquad \text{where}\qquad v,w\in L(\lambda).
\end{equation}
Here, $^\varrho L(\lambda)$  denotes the right $\uq$-module with underlying vector space $L(\lambda)$ where the action is twisted by $\varrho$, cf. \cite[Definition 2.2]{Mee24}
\begin{definition}[Dual canonical basis]\label{def:dualcan}
Let $(M,B)$ be a based $\uq$-module. Then, the dual canonical basis $B^{\ast}=\{b^\ast:b\in B\}$ of $M^\ast$ is the dual of $B$; that is to say
\[b^\ast(b')=\delta_{b,b'},\qquad \text{where}\qquad b,b'\in B.\]
\end{definition}

Using the vector space isomorphism $L(\lambda)^\ast \cong L(\lambda)$ from \eqref{eq:vectiso}, this defines an integral form $_\mathcal A L^{\mathrm{up}}(\lambda)={\mathrm{span}_{\mathcal A}}B^{\ast}(\lambda)$ dual to $_\mathcal A L(\lambda)$, in the sense that
\[_\mathcal AL^{\mathrm{up}}(\lambda)=\{v\in L(\lambda): (v, {_\mathcal AL(\lambda))}\subset \mathcal A\}.\]
According to \cite[Proposition 19.3.3]{Lus10}, we have $(b,b')\in \mathcal A$. Thus, $_\mathcal AL^{\mathrm{up}}(\lambda)$ is a $_\mathcal A\uq$-module with $_\mathcal AL^{\mathrm{up}}(\lambda)\supset _\mathcal AL(\lambda)$. 
\section{Quantum symmetric pairs and based $\uqb$-modules}\label{Sec:QSP}
In the following we introduce quantum symmetric pairs in the sense of \cite{Kol14} and \cite{Le99}. 
\subsection{Admissible pairs and $\imath$root datum}
\begin{notation}
	For each $I_\bullet\subset I$ denote $W_{\bullet}\subset W$ the parabolic subgroup associated with $I_\bullet$, i.e the subgroup generated by the reflections $s_i$ with $i\in I_\bullet$. Let $w_\bullet$ be the longest element of the parabolic subgroup $W_{\bullet}$. 
\end{notation}

\begin{definition}[Admissible pair]
An \emph{admissible pair} $(\I_\bullet,\tau)$ consists of a subset $\I_\bullet\subset \I$ and an involution $\tau:\I\to \I$ with
\begin{enumerate}
	\item $a_{i,j}=a_{\tau i, \tau j}$ for all $i,j \in I$.
	\item $\tau|_{\I_\bullet}=-w_\bullet$.
	\item If $i\in I_\circ:=\I\setminus\I_\bullet$ and $\tau i=i$ then $\langle \rho_\bullet^\vee,\alpha_i\rangle\in \Z$.
\end{enumerate}
Here $\rho_\bullet^\vee$ denotes the half sum of positive coroots relative to $\I_\bullet$.
\end{definition}
We only consider admissible pairs that are \emph{irreducible}, i.e. for each $i,j \in I$, there exists
a sequence $i= i_1\dots ,i_r = j \in I$ such that for each $k = 1,...,r- 1$, we have either
$a_{i_k ,i_{k+1}} \neq 0$ or $i_{k+1} = \tau i_k$. 

\begin{definition}[Rank, rank one admissible pair]\label{def:rankone}	The \emph{rank} of an admissible pair $(I_\bullet,\tau)$ equals the number of $\tau$-orbits in $I_\circ$. Each $i\in I_\circ$ induces an irreducible admissible pair $( I_\bullet,\tau |_{I_{i\cup \tau i}\cup I_\bullet})$ of \emph{rank one}; if needed we restrict it to the irreducible component containing $i$.
\end{definition}

Let
 $\theta = -w_\bullet \circ \tau$. It defines an involution the weight lattice $X$ and the coroot lattice $Y$.
We fix the following notation as in \cite[(3.3)]{Bao18}
\begin{equation*}
	X_\imath=X/\langle \lambda-\theta\lambda\mid\lambda\in X\rangle\qquad \text{and}\qquad Y^\imath=\{h\in Y\mid \theta h=h\}.
\end{equation*}
The abelian group $X_\imath$ is called \emph{$\imath$weight lattice}, and the abelian group $Y^\imath$ the \emph{$\imath$coweight lattice}. We denote $\overline{\,\cdot\,}: X\to X_\imath$ the quotient map.
The pairing between $Y$ and $X$ induces a pairing between $Y^\imath\times X_\imath\rightarrow\mathbb{Z}$ defined by
\[\langle \overline{\lambda},h\rangle:= \langle \lambda,h\rangle,\qquad \text{where}\qquad \lambda \in X,h\in Y^\imath. \] 
Define $\breve{X}^+=\{\lambda\in X^+\,:\,\overline{\lambda}=0\}$, we refer to their elements as \emph{spherical weights.} The \emph{restricted root system} $\Sigma$ associated to the involution $\Theta$ equals

\begin{equation}\label{eq:restroot}
	\Sigma=\big\{\frac{\alpha-\Theta \alpha}{2}:\alpha\in \mathcal R\big\}.\end{equation}
Let \[I_{\mathrm{ns}}:=\{i\in I_\circ\,:\, \tau i=i,\,\langle \alpha_i^\vee,\alpha_j\rangle =0\text{ for all }\}.\]
\subsection{Quantum symmetric pairs}\label{sec:qsp}
For each $i\in I_\circ $, consider a family of \emph{parameters} $(\boldsymbol{c},\boldsymbol{s})=(c_i,s_i)_{i\in I_\circ}$ with $c_i \in \mathcal A^\times$ and $s_i\in \mathcal A$ for all $i\in I_\circ$.

\begin{remark}\label{rem:par}
	The parameters $(c_i,s_i)_{i\in I_\circ}$ are assumed to coincide with those of \cite[Lemma 4.2.1]{Wat23}. That is to say
	$s_i=0$ and 
	\[c_i=
	\begin{cases}
		q^{-1} & \text{if } ( I_\bullet,\tau |_{I_{i\cup \tau i}\cup I_\bullet,}) \text{ is of type }\mathsf{AIII_b^1}, \\
		q & \text{if } ( I_\bullet,\tau |_{I_{i\cup \tau i}\cup I_\bullet,}) \text{ is of type }\mathsf{AII_3}, \\
		1 & \text{if } ( I_\bullet,\tau |_{I_{i\cup \tau i}\cup I_\bullet,})\text{ is of type }\mathsf{AIII_2}, \\
		q^{2n-3} & \text{if } ( I_\bullet,\tau |_{I_{i\cup \tau i}\cup I_\bullet,}) \text{ is of type }\mathsf{BII_n}, \\
		q^{n-1} & \text{if } ( I_\bullet,\tau |_{I_{i\cup \tau i}\cup I_\bullet,}) \text{ is of type }\mathsf{CII_n}, \\
		q^{n-2} & \text{if } ( I_\bullet,\tau |_{I_{i\cup \tau i}\cup I_\bullet,}) \text{ is of type }\mathsf{DII_n}, \\
		q^5 & \text{if } ( I_\bullet,\tau |_{I_{i\cup \tau i}\cup I_\bullet,}) \text{ is of type }\mathsf{F II_4}.
	\end{cases}
	\]	
	 with the following exceptions: 
	\begin{enumerate}\label{enum:2}
		\item For $i\in I_\circ$ with rank one diagram of type $\mathsf{AIV^m}=\mathsf{AIII_a}$ (see Table \ref{table:1}, with $m=|I_\bullet|$), we allow all parameters $c_i,c_{\tau i}$ that satisfy $c_ic_{\tau i}=(-1)^m q^{m-1}$. They depend on an integer $n$ by $s=0$, $c_{1,n}=q^n$ and $c_{m,n}=(-1)^mq^{m-1-n}$.
		\item For $i\in I_\circ$ with rank one diagram of type $\mathsf{AI}=\mathsf{AIII_b}$ (see Table \ref{table:1}) and $\langle \alpha_k^\vee,\alpha_i\rangle\in 2\Z$ for all $k\in I_{\mathrm{ns}}$, we allow $s_i=[n]_i$ with $n\in\Z$.
	\end{enumerate}
\end{remark}

\begin{definition}[QSP coideal subalgebra]\label{def:qsp}
Let $\uqb=\uqbs \subset \uq$ be the \emph{quantum symmetric pair coideal subalgebra} defined in \cite{Kol14}. It is the $\mathbb{Q}(q)$-subalgebra of $\uq$ generated by elements
\[
B_i=F_i+c_i T_{w_{\bullet}}(E_{\tau i})K_i^{-1}+s_iK_i^{-1},\quad F_j, \quad E_j,\quad K_h, \quad\text{where}\quad i\in I_\circ ,j\in I_\bullet, h\in Y^\imath,\]
were $T_{w_{\bullet}}=T_{w_{\bullet},1}''$ is the Lusztig's braid group operator associated with $w_\bullet$, cf. \cite[\S 37.1]{Lus10}. The pairs $(\uq, \uqb)$ are called \emph{quantum symmetric pairs.} 
\end{definition}
The term \emph{coideal subalgebra} in Definition \ref{def:qsp} refers to the fact that $\triangle(\uqb)\subset \uqb\otimes \uq, $ cf. \cite[Proposition 5.2.]{Kol14}. 
The \emph{type} of a quantum symmetric symmetric pair refers to the type of the admissible pair $(I_\bullet,\tau)$. These are the Satake diagrams classified in \cite{Araki}. The \emph{rank} of the quantum symmetric pair $(\uq,\uqb)$ refers to the rank of its corresponding admissible pair.

Recall from Definition \ref{def:rankone} that each $i\in I_\circ$ induces an irreducible admissible pair of rank one.

\begin{definition}[Rank one QSP]\label{def:rank}
Let $i\in I_\circ$. Then, we denote $(\uq_i,\uqb_i)$ the rank one quantum symmetric pair associated to the rank one admissible pair $( I_\bullet,\tau |_{I_{i\cup \tau i}\cup I_\bullet,})$.
\end{definition}

We denote $\uq_\bullet\subset\uqb$ the $\Q(q)$-subalgebra generated by
\[
F_j, \quad E_j,\quad K_j, \qquad\text{where}\qquad j\in I_\bullet.\]
A consequence of the assumptions on the parameters is that the \emph{$\imath$bar involution }exists on $\uqb$, cf. \cite[Lemma 3.15]{Bao18}. 
\begin{definition}[$\imath$bar involution]
The \emph{$\imath$bar involution} is the unique $\Q$-algebra involution $\psi^\imath:\uqb\to\uqb$ with
\[B_i\mapsto B_i,\qquad K_h\mapsto K_{-h},\qquad  \psi^\imath|_{\uq_\bullet}=\overline{\,\cdot\,}|_{\uq_\bullet},\qquad q\mapsto q^{-1},\qquad\text{where}\qquad i\in I_\circ, h\in Y^\imath.\]
\end{definition}

\subsection{Based $\uqb$-modules}\label{subsec:based}
In analogy of based $\uq$-modules, we introduce the notion of a based $\uqb$-module, cf. \cite[\S6.3.1.]{Bao21}.
\begin{definition}\cite[\S3.2]{Wat2025}
A $\uqb$-module $M$ is called a \emph{weight $\uqb$-module} if
\[M=\bigoplus_{\lambda\in X_\imath}M_{\lambda},\qquad \text{where}\qquad M_\lambda\subset\{m\in M\,:\, K_h m=q^{\langle h,\lambda\rangle}m\,\text{ for all }h\in Y^\imath\}.\]
such that 
\[ E_j M_\zeta \subset M_{\zeta+\overline{\alpha_j}},\qquad F_j M_\zeta \subset M_{\zeta-\overline{\alpha_j}},\qquad B_kM_\zeta \subset  M_{\zeta-\overline{\alpha_k}},\qquad \text{for all}\qquad j\in I_\bullet,k\in I_\circ.\]
\end{definition}

\begin{example}
Each weight $\uq$-module $M$ is a weight $\uqb$-module by restriction, to be precise
\begin{equation}\label{eq:iweightspace}
	M=\bigoplus_{\lambda\in X_\imath}M_{\lambda},\qquad \text{where}\qquad M_\lambda=\bigoplus_{\mu\in X\, \overline{\mu}=\lambda} M_\mu.
\end{equation}
\end{example}

Next, we recall some facts about the modified form of $\uqb$. Its definition is needed to introduce the notion of based $\uqb$-modules.

Let $\dot{\uqb}$ be the modified algebra of $\uqb$ with idempotents $\{\boldsymbol{1}_\zeta\,:\,  \zeta \in X_\imath\}$, we refer the reader for its definition to \cite[\S 3.5]{Bao18}.
By construction each weight $\uqb$-module is a $\dot{\uqb}$-module, where the idempotents $1_\zeta$ act on weight $\uqb$-modules by projection on the corresponding weight space.

Let $i\neq \tau i=w_\bullet i$. Then, the parity of $\langle \alpha_i^\vee,\lambda\rangle\in \Z$ is independent of representative of $\overline{\lambda}$, we denote $p_i (\overline{\lambda})$ the parity in $\Z_2=\{\overline{0},\overline{1}\}$, cf. \cite[\S 4.1.1]{Wat2025}, we use this notation in Definition \ref{def:idiv}.
\begin{definition}[$\imath$divided power]\cite[Theorem 5.1]{Bao21}\cite[\S 4.1]{Wat2025}\label{def:idiv}
Let $i\in I_\circ$, $a\in \Z_{\geq0}$ and $\lambda \in X$. We denote by $B_{i,\overline{\lambda}}^{(a)}\in \dot{\uqb}$ the \emph{$\imath$divided power}, they are given by
\begin{enumerate}\label{enum:divided}
	\item 
	Let $\alpha_i\neq \tau \alpha_i$ or $\alpha_i\neq w_\bullet \alpha_i$. Then 
	$	B_{i,\overline{\lambda}}^{(a)}=\frac{B_i^a}{[a]_i!}\boldsymbol{1}_{\overline{\lambda}}.
$
	\item Let $\alpha_i=\tau \alpha_i\neq w_\bullet \alpha_i$. 
	Then the elements $B_{k,\overline{\lambda}}^{(a)}\in \dot{\uqb}$ are inductively defined according to
	\begin{enumerate}
		\item $B_{k, \overline{\lambda}}^{(0)} := \boldsymbol{1}_\zeta$.
		\item If $p_i(\overline{\lambda}) = \bar{0}$, then $B_{k,\zeta}^{(1)} := (B_k - s_i)\boldsymbol{1}_{\overline{\lambda}}$.
		\item If $p_i(\overline{\lambda}) = \overline{a}$ and $a \geq 1$, then $B_{i,\overline{\lambda}}^{(a)} := \frac{1}{[a]_i} B_i B_{i,\overline{\lambda}}^{(a-1)}$.
		\item If $p_i(\overline{\lambda}) \neq \overline{a}$ and $a \geq 2$, then
		\[
		B_{i, \overline{\lambda}}^{(a)} := \frac{1}{[a]_i[a-1]_i}(B_i^2 - (q_i^{a-1} + q_i^{-a+1})s_i B_i + (s_i^2 - q_ic_i[a-1]_i^2 )) B_{i,\overline{\lambda}}^{(a-2)}.
		\]
	\end{enumerate}
\end{enumerate}
\end{definition}
\begin{definition}
Let $_\mathcal A\dot{\uqb}$ be the $\mathcal A$-form of $\dot{\uqb}$ defined by
\begin{equation}\label{eq:dotaform}
_\mathcal A\dot{\uqb}=\{x\in \dot{\uqb}\,: x{_\mathcal AM}\subset {_\mathcal AM}\text{ for all integrable }\uq\text{-modules }M\,\}.
\end{equation}

\end{definition}
By \cite[Corollary 7.5]{Bao18}, the $\mathcal A$-algebra $_\mathcal A\dot{\uqb}$ is generated by 
\begin{equation}\label{eq:genaform}
B_{i,\zeta}^{(a)},\qquad \text{and}\qquad E_j \boldsymbol{1}_{\zeta}\qquad\text{ where}\qquad \zeta\in X_\imath, i\in I_\circ , j\in I_\bullet \,\text{ and }\,a\geq0.
\end{equation}

\begin{definition}[Based $\uqb$-module]\cite[Definition 6.11]{Bao21}\label{def:Bbased} 
A \emph{based $\uqb$-module} is a pair $(M,B)$, consisting of a weight $\uqb$-module $M$ and a basis $B$ of $M$ satisfying:
\begin{enumerate}
	\item For each $\lambda \in X_\imath$ the set $B\cap M_\lambda$ is a basis of $M_\lambda$.
	\item The $\mathcal A$-submodule $_\mathcal AM$ of  $M$ spanned by $B$ is stable under the action of $_\mathcal A \dot{\uqb}$.
	\item The $\Q$-linear endomorphism $\psi^\imath$ on $M$, defined by $\psi^\imath(q^nb)= q^{-n}b$ for all $n \in \Z$ and
	$b \in B$, is compatible with the bar involution on $\uqb$;
	\[\psi^\imath(xm)=\psi^\imath(x)\psi^\imath(m),\qquad \text{where}\qquad x\in \uqb,m\in M.\]
	\item Set $\mathcal L := \mathbf{A}_\infty B$ and $\mathcal B := \{b + q^{-1}\mathcal L \,: \,b \in B\}$. Then, the set $\mathcal B$ is a $\Q$-basis of $\mathcal L/q^{-1}\mathcal L$.
\end{enumerate}

\end{definition}

In Example \ref{ex:BW} we recall that each finite dimensional based $\uq$-module is naturally equipped with the structure of a based $\uqb$-module.

\begin{example}\cite[Theorem 6.12]{Bao21}, \cite[\S 2.2.2]{Bao24}\label{ex:BW}
Let $\lambda\in X^+$ and let $\Upsilon: L(\lambda)\to L(\lambda)$ denote the \emph{quasi-K} matrix, cf. \cite[\S6]{Ko19}. Then the involution $\psi^\imath=\Upsilon\circ \overline{\,\cdot\,}: L(\lambda)\to L(\lambda)$ is compatible with the bar involution on $\uqb$, that is to say
	\[\psi^\imath(xv)=\psi^\imath(x)\psi^\imath(v),\qquad \text{where}\qquad x\in \uqb,v\in L(\lambda).\]
	Moreover, $L(\lambda)$ admits a unique basis $B^\imath(\lambda)=\{b^\imath: b\in B(\lambda)\}$ such that for all $b\in B(\lambda)$:
	\begin{enumerate}
		\item $\psi^\imath(b^\imath)=b^\imath$.
		\item $b^\imath=b+\sum_{\mathrm{wt}(b')<\mathrm{wt}(b)}a_{b,b'}b'$, with $a_{b,b'}\in q^{-1}\Z[q^{-1}]$ for all $b'\in B(\lambda)$.
		\item The $\mathcal A$ and $\mathbf{A}_{\infty}$-submodules spanned by $B^\imath$ equal $_\mathcal A L(\lambda)$ and $\mathcal L(\lambda)$, respectively.
		\item $(L(\lambda),B^\imath(\lambda))$ is a based $\uqb$-module.
	\end{enumerate}
	
\end{example}

We write $\mathcal B= B+q^{-1}\mathcal L$. In analogy to the case of $\uq$-modules (Definition \ref{def:basedmor}), one defines the notion of a based $\uqb$-submodule and morphisms of based $\uqb$-modules. 

\begin{lemma}\cite[Lemma 3.3.1]{Wat23}\label{lem:equivbased}
Let $(M,B_M)$,$(N,B_N)$ be based $\uqb$-modules, and let $f : M \to N$ a
$\uqb$-module homomorphism. Then, $f$ is a based $\uqb$-module homomorphism if and only if it satisfies the following:
\begin{enumerate}
	\item $f(\mathcal L_M) \subset  \mathcal L_N$; it thus induces an map $\phi: \mathcal L_M/q^{-1}\mathcal L_M\to \mathcal L_N/q^{-1}\mathcal L_N$ defined by 
	\begin{equation}\label{eq:L}
		b+q^{-1}\mathcal L_M\mapsto f(b)+q^{-1}\mathcal L_N,\qquad \text{where}\qquad b\in B_M.\end{equation}
	\item  $f(_\mathcal AM) \subset  {_\mathcal AN}.$
	\item $f \circ \psi^\imath_M = \psi^\imath_N\circ f$, where $\psi^\imath_M$, $\psi^\imath_N$ denote the $\imath$bar involutions on $M$ and $N$, respectively.
	\item $\phi$ is injective on $\{b \in  \mathcal B_M\,:\, \phi(b) \neq 0\}$.
\end{enumerate}
\end{lemma}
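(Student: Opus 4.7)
The forward direction is a direct unpacking of Definition \ref{def:basedmor}. Since $f(B_M)\subset B_N\cup\{0\}$, the relations $\mathcal L_N=\mathbf{A}_\infty B_N$ and ${_\mathcal A N}=\mathcal A B_N$ give (1) and (2). For (3), every $b\in B_M$ is fixed by $\psi^\imath_M$ and every $f(b)\in B_N\cup\{0\}$ is fixed by $\psi^\imath_N$, so $f\circ\psi^\imath_M$ and $\psi^\imath_N\circ f$ agree on $\{q^n b\}_{n,b}$ and hence on all of $M$. For (4), if $\phi(b_1+q^{-1}\mathcal L_M)=\phi(b_2+q^{-1}\mathcal L_M)\neq 0$ with $b_1\neq b_2\in B_M$, then $f(b_1),f(b_2)\in B_N$ are congruent modulo $q^{-1}\mathcal L_N$ and therefore equal; hence $b_1-b_2\in\ker f$, and basedness of $\ker f$ forces $f(b_1)=0$, a contradiction.

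For the converse, assume (1)--(4) and fix $b\in B_M$. Conditions (1) and (2) put $f(b)\in {_\mathcal A N}\cap\mathcal L_N$, so $f(b)=\sum_{b'\in B_N}c_{b'}(q)b'$ with $c_{b'}(q)\in\mathcal A\cap\mathbf{A}_\infty=\Z[q^{-1}]$; condition (3) forces each $c_{b'}(q)$ to be bar-invariant, hence a constant $n_{b',b}\in\Z$. If $\phi(b+q^{-1}\mathcal L_M)=0$, i.e.\ $f(b)\in q^{-1}\mathcal L_N$, then the $n_{b',b}$ lie in $q^{-1}\mathbf{A}_\infty\cap\Z=\{0\}$ and $f(b)=0$. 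Granted the remaining claim that, when $\phi(b+q^{-1}\mathcal L_M)\neq 0$, exactly one $n_{b',b}$ equals $1$ with the rest zero, the set $B_K:=\{b\in B_M:f(b)=0\}$ spans $\ker f$: indeed (4) makes $\{f(b):b\in B_M\setminus B_K\}$ a linearly independent family in $N$, so any $v=\sum c_b(q)b\in\ker f$ has $c_b(q)=0$ for $b\notin B_K$.

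The single-basis-element claim is the main obstacle. The approach is to upgrade (4) to full compatibility of $\phi$ with the $\imath$crystal Kashiwara operators on $\mathcal L_N/q^{-1}\mathcal L_N$; this compatibility is forced by the $_\mathcal A\dot{\uqb}$-linearity implicit in (2) combined with the formulas for the $\imath$divided powers of Definition \ref{def:idiv}, which descend to operators on $\mathcal L/q^{-1}\mathcal L$ permuting $\mathcal B\cup\{0\}$. Using injectivity in (4) on non-vanishing classes, a weight induction through the resulting $\imath$crystal graph then reduces every non-zero integer vector $(n_{b',b})_{b'\in B_N}$ to a unit vector of $\Z^{B_N}$, following the crystal-rigidity strategy of Watanabe's proof of \cite[Lemma 3.3.1]{Wat23}.
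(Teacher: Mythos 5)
The paper does not prove this lemma at all: it is imported verbatim from \cite[Lemma 3.3.1]{Wat23}, so there is no internal argument to compare yours against. Taken on its own terms, your forward direction is correct, and the first half of your converse is exactly the right computation: $f(b)\in {_\mathcal A N}\cap\mathcal L_N$ gives coefficients in $\mathcal A\cap\mathbf{A}_\infty=\Z[q^{-1}]$, and $\psi^\imath$-invariance then forces $f(b)=\sum_{b'}n_{b'}b'$ with $n_{b'}\in\Z$, with the case $\phi(b)=0$ handled correctly.

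The genuine gap is the step you yourself flag as ``the main obstacle'': showing that a nonzero $f(b)$ is a \emph{single} element of $B_N$. Conditions (i)--(iv), read literally, give you no leverage here --- they record only lattice/integral-form preservation, bar-equivariance, and injectivity of $\phi$ on nonzero classes, and none of these distinguishes $f(b)=b'$ from, say, $f(b)=b_1'+b_2'$ so long as distinct $b$'s still have distinct nonzero images. Your proposed repair via $\imath$crystal operators does not close this: Definition \ref{def:Bbased} nowhere asserts that the $\imath$divided powers descend to operators on $\mathcal L/q^{-1}\mathcal L$ permuting $\mathcal B\cup\{0\}$ (that is a substantive theorem of Watanabe's $\imath$crystal theory, not a formal consequence of $_\mathcal A\dot{\uqb}$-stability), and the asserted ``weight induction reducing every nonzero integer vector to a unit vector'' is not carried out and is far from automatic (the $n_{b'}$ may a priori be negative, and the relevant graphs need not be connected in a useful way). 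The actual resolution is that Watanabe's condition (iv) is an injection of $\{b\in\mathcal B_M:\phi(b)\neq 0\}$ \emph{into} $\mathcal B_N$, i.e.\ it asserts $\phi(\mathcal B_M)\subset\mathcal B_N\cup\{0\}$ in addition to injectivity; the paper's paraphrase drops the containment. With that hypothesis restored, your integrality computation finishes the converse in one line ($\sum_{b'}n_{b'}\overline{b'}\in\mathcal B_N$ together with linear independence of $\mathcal B_N$ forces exactly one $n_{b'}=1$ and the rest $0$), and no crystal rigidity is needed. You should either prove the containment from the remaining hypotheses or, more honestly, add it to the statement before using the lemma.
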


In analogy to Definition \ref{def:dualcan}, we define dual canonical basis of a based $\uqb$-module and their integral form. To each based $\uqb$-module $(M,B)$ we associate $B^\ast$, the dual canonical basis.

\subsection{Characters of $\uqb$}
Based on \cite{Mee24} this section reviews the theory of characters for $\uqb$.
\begin{definition}[Integrable character]\label{def:integrable}
A character $\chi: \uqb\to \Q(q)$ is said to be \emph{integrable} if it occurs in an integrable $\uq$-module.
\end{definition}

\begin{definition}[Hermitian type]\cite[\S4]{Mee24}
A quantum symmetric pair $(\uq,\uqb)$ is said to be of \emph{Hermitian type} if its classical analog $(\mathfrak{g},\mathfrak{g}^\Theta)$ is of Hermitian type. Equivalently, if the corresponding admissible pair $(I_\bullet,\tau)$ is one of Table \ref{table:1}.
\end{definition}

Classically nontrivial characters of $\mathfrak{k}$ occur precisely when $(\mathfrak{g},\mathfrak{k})$ is of Hermitian type, cf. \cite[\S 5.1]{Heckman1995}. Let $i\in I_\circ$ and recall the associated rank one admissible pair as introduced in Definition \ref{def:rankone}.

\begin{notation}
Fix a linear order $(I,\succ)$.
Let $(\uq,\uqb)$ be a Hermitian quantum symmetric pair. Then there exists a unique $i\in I_\circ$ with $i\succeq \tau i$ such that the  irreducible rank one admissible pair associated to $i$ is one of Table \ref{table:1}. We denote $I_{\scriptscriptstyle\otimes }\subset I_\circ$ the corresponding orbit under $\tau$. 
\end{notation}

{\allowdisplaybreaks\begin{table}[H]
		\centering
		\begin{tabular}{|c | c | c |}
			\hline
			Type	& Satake diagram & Restricted root system  \\
			\hline
			$\mathsf{AIII_a}$	&$\begin{dynkinDiagram} A{IIIa}\dynkinRootMark{t}3 \dynkinRootMark{t}8
			\end{dynkinDiagram}$& $\mathsf{BC_n}$  \\
			\hline
			$\mathsf{AIII_b}$	& $\begin{dynkinDiagram} A{IIIb} \dynkinRootMark{t}4
			\end{dynkinDiagram}$ & $\mathsf{C_n}$   \\
			\hline
			$\mathsf{BI}	$&  $\begin{dynkinDiagram} B{o2*.**}\dynkinRootMark{t}1\end{dynkinDiagram}$& $\mathsf{B_2}$   \\
			\hline
			$\mathsf{CI}$	& $\begin{dynkinDiagram} CI \dynkinRootMark{t}4\end{dynkinDiagram}$ &  $\mathsf{C_m}$, $m\geq2$\\
			\hline
			$\mathsf {DI}$	& $\begin{dynkinDiagram} D{o2*.***}\dynkinRootMark{t}1\end{dynkinDiagram}$ &  $\mathsf{B_2}$ \\
			\hline
			$\mathsf{DIII_b}$	& $\begin{dynkinDiagram} D{IIIb} \dynkinRootMark{t}6\dynkinRootMark{t}7\end{dynkinDiagram}$ & $\mathsf{BC_m}$, $m\geq 2$   \\
			\hline		
			$\mathsf{EIII}$	&  $\begin{dynkinDiagram} E{III}\dynkinRootMark{t}1\dynkinRootMark{t}6\end{dynkinDiagram}$& $\mathsf{BC_2}$   \\
			\hline
			$\mathsf{EVII}$	&  $\begin{dynkinDiagram} E{VII} \dynkinRootMark{t}7\end{dynkinDiagram}$&  $\mathsf{C_3}$   \\
			\hline
		\end{tabular}
		\caption{Satake diagrams of Hermitian type and $I_{\scriptscriptstyle\otimes}$} \label{table:1}
\end{table}}

\begin{remark}\label{rem:existence}
If $\chi$ is an nontrivial integrable character for $\uqb$, then the scalars can be extended to $k=\overline{\C(q)}$ to obtain a nontrivial integrable character $\chi^k: \uqb_{\Q(q)}\otimes k\to k$. By \cite[Proposition 4.21]{Mee24} this implies that $\uqb$ is of Hermitian type. For the rest of the paper we thus assume $(\uq,\uqb)$ to be of Hermitian type.
\end{remark}

\begin{notation}\label{rem:n}
It follows from Table \ref{table:1} that the rank one Hermitian quantum symmetric pairs are of type $\mathsf{AIII_a}$ and $\mathsf{AIII_b}$. By Remark \ref{rem:par} these types admit an extra parameter depending on an integer $n$. We use the notation $\uqb=\uqb_n$ to highlight the parameter dependence.
\end{notation}

\begin{notation}
Let $\chi:\uqb\to \Q(q)$ be a character. Denote $V_{\chi}=\mathrm{span}_{\Q(q)}\{1\}$ the $\uqb$-module with
\[b1=\chi(b)1,\qquad \text{where}\qquad b\in \uqb.\]
\end{notation}

By \cite[Proposition 3.2.3]{Wat24}, the algebra anti-automorphism $\varrho$ from \eqref{eq:varrho} restricts to an algebra anti-automorphism of $\uqb$. This is used to conclude Proposition \ref{prop:complement}.

\begin{proposition}\label{prop:complement}\cite[Corollary 3.2.4]{Wat24}
Let $\lambda\in X^+$. Then the highest weight module $L(\lambda)$ regarded as a $\uqb$-module, is completely reducible.
\end{proposition}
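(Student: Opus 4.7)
The plan is to exploit the non-degenerate symmetric $\varrho$-contravariant bilinear form $(\,,\,):L(\lambda)\times L(\lambda)\to\Q(q)$ with $(v_\lambda,v_\lambda)=1$ introduced in the setup preceding Definition \ref{def:dualcan}. By \cite[Proposition 3.2.3]{Wat24}, the anti-automorphism $\varrho$ defined in \eqref{eq:varrho} restricts to an anti-automorphism of $\uqb$. Consequently, the identity $(xv,w)=(v,\varrho(x)w)$ continues to hold for all $x\in\uqb$, so the same bilinear form is a $\varrho$-contravariant pairing for the restricted $\uqb$-action on $L(\lambda)$.

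Given any $\uqb$-submodule $N\subset L(\lambda)$, the orthogonal complement $N^\perp=\{v\in L(\lambda):(v,n)=0\text{ for all }n\in N\}$ is again a $\uqb$-submodule: for $b\in\uqb$, $v\in N^\perp$ and $n\in N$, contravariance gives $(bv,n)=(v,\varrho(b)n)=0$ since $\varrho(b)n\in N$. Non-degeneracy of the form forces $\dim N+\dim N^\perp=\dim L(\lambda)$, so once $N\cap N^\perp=0$ is established one obtains the direct sum decomposition $L(\lambda)=N\oplus N^\perp$, and complete reducibility follows by induction on the dimension.

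The main obstacle is thus showing $N\cap N^\perp=0$, equivalently, that $L(\lambda)$ admits no non-zero totally isotropic $\uqb$-submodule. Since $N\cap N^\perp$ is itself a $\uqb$-submodule, by passing to a minimal one it suffices to treat the case when $N$ is a simple $\uqb$-submodule; then $N\cap N^\perp$ is either $0$ or all of $N$, and only the latter possibility must be excluded. The plan for this step is a specialization argument at $q=1$: the form takes values in $\mathcal A$ on ${_\mathcal A L(\lambda)}$ by \cite[Proposition 19.3.3]{Lus10}, and its classical limit recovers, up to normalization, the Shapovalov form on the finite-dimensional simple $\mathfrak g$-module of highest weight $\lambda$, which is positive definite on the compact real form. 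Hence no non-zero classical vector is self-orthogonal, and a careful rescaling-and-specialization argument with integral forms lifts anisotropy back to $\Q(q)$, forcing $N\cap N^\perp=0$. Once this step is completed, the inductive decomposition $L(\lambda)=N\oplus N^\perp$ yields complete reducibility of $L(\lambda)$ as a $\uqb$-module.
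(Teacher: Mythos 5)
Your overall strategy --- taking orthogonal complements with respect to the $\varrho$-contravariant form, which are $\uqb$-stable because $\varrho$ restricts to an anti-automorphism of $\uqb$ --- is exactly the mechanism the paper invokes: the paper gives no proof of its own, deferring to \cite[Corollary 3.2.4]{Wat24} and noting only that it rests on \cite[Proposition 3.2.3]{Wat24}. The one substantive step is anisotropy of the form, and there you take a different route from the one standard in this literature: specialization at $q=1$ and positive definiteness of the classical contravariant (Shapovalov) form. This works, granted two facts you use somewhat loosely: that $(b,b')\in\mathcal A$ so the form can be specialized at $q=1$ (the paper records this, citing \cite[Proposition 19.3.3]{Lus10}), and that the classical form is positive definite on the $\Q$-form spanned by the specialized canonical basis (true, via unitarizability for the compact real form). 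A shorter, more self-contained argument --- and the one implicit in the sources --- works at $q=\infty$ instead: by \cite[Proposition 19.3.3]{Lus10} the Gram matrix of the canonical basis lies in $\delta_{b,b'}+q^{-1}\Z[[q^{-1}]]$, so after normalizing a putative isotropic vector $v=\sum_b c_b b$ to have coefficients in $\mathbf{A}_\infty$ not all in $q^{-1}\mathbf{A}_\infty$, the leading term of $(v,v)$ at $q=\infty$ is a nonzero sum of squares of rationals, whence $(v,v)\neq 0$. Either way you obtain $N\cap N^\perp=0$ for every $\uqb$-submodule $N$ and the induction goes through; note that your detour through simple submodules is unnecessary once anisotropy of the form on all of $L(\lambda)$ is established.
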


\begin{definition}[Spherical vector]
Let $M$ be an integrable $\uq$-module. Then a \emph{spherical vector} $v\in M$ is a nonzero vector spanning a one-dimensional $\uqb$-module.
\end{definition}

\begin{notation}
Let $v\in L(\lambda)$. Then we write $v=v_\mu+\mathrm{l.o.t}$ to indicate that $\mathrm{l.o.t}\in \bigoplus_{\nu<\mu}L(\lambda)_\nu$. 
\end{notation}

\begin{lemma}\label{lem:sometechnical}
Let $\lambda\in X^+$ and let $v\in L(\lambda)$ be a spherical vector. Then
\begin{enumerate}
	\item up to a nonzero scalar multiple, $v=v_\lambda+\mathrm{l.o.t.}$;
	\item the vectors $v$ and $\Upsilon\overline{v}$ are linearly dependent.
\end{enumerate}
\end{lemma}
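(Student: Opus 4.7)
The two claims call for rather different arguments: (1) is a direct weight analysis, while (2) combines (1) with multiplicity-freeness and a character identification.

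\emph{For (1)}, I would decompose $v=\sum_\mu v_\mu$ into $\uq$-weight components and let $\chi$ denote the character of the one-dimensional $\uqb$-module $\Q(q)v$. Since one-dimensional representations of the Drinfel'd-Jimbo subalgebra $\uq_\bullet$ must vanish on $E_j,F_j$ for $j\in I_\bullet$, matching weight components in $E_jv=F_jv=0$ forces each nonzero $v_\mu$ to lie in the trivial $\uq_\bullet$-isotype; in particular $\langle\alpha_j^\vee,\mu\rangle=0$ for $j\in I_\bullet$. Let $\mu_0$ be maximal with $v_{\mu_0}\neq 0$. Reading off the weight-$(\mu_0+w_\bullet\alpha_{\tau i})$ component of $B_iv=\chi(B_i)v$ for $i\in I_\circ$ — this weight strictly exceeds $\mu_0$ since $\alpha_{\tau i}$ is outside the $I_\bullet$-span, so $w_\bullet\alpha_{\tau i}>0$ — only $c_iT_{w_\bullet}(E_{\tau i})K_i^{-1}v_{\mu_0}$ contributes on the LHS while the RHS vanishes, hence $T_{w_\bullet}(E_{\tau i})v_{\mu_0}=0$. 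Because $v_{\mu_0}$ is $\uq_\bullet$-trivial of zero $I_\bullet$-weight, the braid operator $T_{w_\bullet}$ fixes it, and the intertwining identity together with invertibility of $T_{w_\bullet}$ give $E_{\tau i}v_{\mu_0}=0$. Ranging over $i\in I_\circ$ (with $\tau$ permuting $I_\circ$) and combining with the $I_\bullet$ case yields $E_kv_{\mu_0}=0$ for all $k\in I$, forcing $\mu_0=\lambda$ and $v_{\mu_0}\in\Q(q)v_\lambda$; rescaling gives the normal form $v=v_\lambda+\mathrm{l.o.t.}$

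\emph{For (2)}, after normalizing so that $v=v_\lambda+\mathrm{l.o.t.}$, I would note that by Example \ref{ex:BW} the map $\psi^\imath=\Upsilon\circ\overline{\,\cdot\,}$ intertwines the $\imath$bar involution on $\uqb$, so $\Upsilon\overline v=\psi^\imath(v)$ spans a one-dimensional $\uqb$-submodule of $L(\lambda)$ with character $\chi'(b):=\overline{\chi(\psi^\imath(b))}$; this vector is nonzero since $\Upsilon$ is invertible. Applying (1) to $\psi^\imath(v)$, we see $\Upsilon\overline v=c(v_\lambda+\mathrm{l.o.t.})$ for some nonzero scalar $c$. The key step is to identify $\chi=\chi'$: using $\psi^\imath(K_h)=K_{-h}$ for $h\in Y^\imath$ and $\psi^\imath(B_i)=B_i$, one computes $\chi'(K_h)=\chi(K_h)$ and $\chi'(B_i)=\overline{\chi(B_i)}$. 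Combined with the classification of integrable characters $\chi_l$ from \cite{Mee24} (Theorem \ref{thmintro}) and the bar-invariance of the relevant character values afforded by the parameter families of Remark \ref{rem:par}, one concludes $\chi=\chi'$. The multiplicity-freeness in Theorem \ref{thmintro} together with complete reducibility (Proposition \ref{prop:complement}) then force the $\chi$-isotypic component of $L(\lambda)$ to be one-dimensional, so $v$ and $\Upsilon\overline v$ span the same subspace and are linearly dependent.

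\emph{Main obstacle.} The identification $\chi=\chi'$ in (2) is the subtle step: it rules out the a priori possibility that $\psi^\imath$ swaps distinct one-dimensional $\uqb$-subrepresentations of $L(\lambda)$. Because the nontrivial characters come entirely from the rank-one Hermitian pairs of types $\mathsf{AIII_a}$ and $\mathsf{AIII_b}$ (Notation \ref{rem:n}), this amounts to a finite check against the explicit formulas in \cite{Mee24}. The weight-theoretic argument for (1) is routine once one recognizes the triviality of $T_{w_\bullet}$ on trivial $\uq_\bullet$-vectors.
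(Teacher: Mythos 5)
Your argument is correct, but it is worth noting that the paper does not actually prove this lemma: it simply cites \cite[Proposition 4.1 \& Lemma 4.7]{Mee24} and observes that the proofs there, written over $k=\overline{\C(q)}$, go through verbatim over $\Q(q)$. What you have written is a self-contained reconstruction of (essentially) that standard argument. Part (1) is sound: the commutation of $E_j,F_j$ with $K_j$ forces $\chi(E_j)=\chi(F_j)=0$, each weight component is then $\uq_\bullet$-trivial, and isolating the strictly-higher weight $\mu_0+w_\bullet\alpha_{\tau i}$ in $B_iv=\chi(B_i)v$ kills $T_{w_\bullet}(E_{\tau i})v_{\mu_0}$, hence $E_{\tau i}v_{\mu_0}$ since $T_{w_\bullet}$ fixes $\uq_\bullet$-trivial vectors; this makes $v_{\mu_0}$ primitive in the simple module $L(\lambda)$, so $\mu_0=\lambda$. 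Part (2) is also correct, with two presentational caveats. First, for multiplicity-freeness you should not invoke Theorem \ref{thmintro} (whose proof, via Theorem \ref{thm:branching}, itself rests on part (1) of this lemma); the bound $[L(\lambda)|_{\uqb}:\chi]\leq 1$ follows directly from part (1), since the difference of two spherical vectors of the same character, both normalized as $v_\lambda+\mathrm{l.o.t.}$, would be a spherical vector with no top term. Second, the identification $\chi=\chi'$ reduces to bar-invariance of $\chi(B_i)$ (the $K_h$-values are automatically fixed since $\chi'(K_h)=\overline{\chi(K_h)^{-1}}=q^{\langle h,\lambda\rangle}$), and the values $[l-n]_i$ and $0$ occurring in Remark \ref{rem:par} and Proposition \ref{prop:fieldextend} are indeed bar-invariant; but since that classification comes from \cite{Mee24} over $k$, you should cite it there rather than as a forward reference within this paper. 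What your route buys is transparency about exactly which inputs (integrability, the positivity of $w_\bullet\alpha_{\tau i}$, bar-invariance of the character values) make the base change to $\Q(q)$ harmless, which the paper leaves implicit in ``mutatis mutandis.''
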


\begin{proof}
These are \cite[Proposition 4.1 \& Lemma 4.7]{Mee24}. The field of definition in the cited article is $k=\overline{\C(q)}$. However, the same proofs works over $\Q(q)$, mutatis mutandis.
\end{proof}

\begin{definition}[Shift of basepoint]
Let $\chi: \uqb\to \Q(q)$ be an integrable character. Then, the \emph{shift of basepoint} $\rho_\chi:\uqb\to \uq$ is defined by
\[b\mapsto \chi(b_{(1)})b_{(2)},\qquad \text{where}\qquad b\in \uqb.\]
By \cite[Proposition 10.5]{Ko23} $\rho_\chi: \uqb=\uqbs\to \uqds$ is an isomorphism of right $\uq$-comodule algebras, where $d_i=c_i \chi(K_{\tau i}K_i^{-1})$ and $s_i=\chi(B_i)$ for $i\in I_\circ$.
\end{definition}

\begin{lemma}\label{lem:shift}
Let $\lambda,\mu\in X^+$. Let $v\in L(\lambda)$ be a $\chi$-spherical vector for $\uqb$ and let $v'\in L(\mu)$ be a $\eta$-spherical vector for $\rho_{\chi}(\uqb)$. Then $v\otimes v'\in L(\lambda)\otimes L(\mu)$ is a $\eta\circ \rho_\chi$-spherical vector for $\uqb$. Moreover, let $\pi_{\lambda+\mu}: L(\lambda)\otimes L(\mu)\to L(\lambda+\mu)$ denote the Cartan projection. Then $\pi_{\lambda+\nu}(v\otimes v')\neq 0$.
\end{lemma}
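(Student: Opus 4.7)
The plan is to establish the two assertions in turn; both follow from explicit computation combined with the structural facts already collected in this section.

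For the first assertion, the key input is the coideal property $\triangle(\uqb)\subset \uqb\otimes \uq$, which lets us write $\triangle(b)=b_{(1)}\otimes b_{(2)}$ with $b_{(1)}\in\uqb$ and $b_{(2)}\in\uq$ in Sweedler notation for any $b\in\uqb$. Using this together with the $\chi$-sphericity of $v$, I compute
\[
b(v\otimes v')=\chi(b_{(1)})\,v\otimes b_{(2)}v'=v\otimes \chi(b_{(1)})b_{(2)}v'=v\otimes \rho_\chi(b)v'.
\]
Since $\rho_\chi(b)\in\rho_\chi(\uqb)$ and $v'$ is $\eta$-spherical for $\rho_\chi(\uqb)$, the right-hand side equals $\eta(\rho_\chi(b))\,v\otimes v'$, which is exactly the claim that $v\otimes v'$ is $(\eta\circ\rho_\chi)$-spherical for $\uqb$.

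For the nonvanishing of the Cartan projection, the strategy is to inspect the highest-weight component. By Lemma \ref{lem:sometechnical}(i), after rescaling we have $v=v_\lambda+\mathrm{l.o.t.}$ in $L(\lambda)$ and $v'=v_\mu+\mathrm{l.o.t.}$ in $L(\mu)$, so
\[
v\otimes v'=v_\lambda\otimes v_\mu+\sum_{(\nu_1,\nu_2)<(\lambda,\mu)}w_{\nu_1}\otimes w_{\nu_2},
\]
with each $w_{\nu_1}\otimes w_{\nu_2}$ of weight strictly less than $\lambda+\mu$. The Cartan projection $\pi_{\lambda+\mu}$ is a morphism of weight $\uq$-modules and by definition sends $v_\lambda\otimes v_\mu$ to $v_{\lambda+\mu}$, while every summand of weight strictly less than $\lambda+\mu$ is sent into weight spaces of $L(\lambda+\mu)$ strictly below $\lambda+\mu$. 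Hence $\pi_{\lambda+\mu}(v\otimes v')=v_{\lambda+\mu}+\mathrm{l.o.t.}$, which is manifestly nonzero.

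There is no genuine obstacle here; the one point to be careful about is ensuring that the Sweedler computation is interpreted correctly, i.e., that $\chi(b_{(1)})$ is defined because $b_{(1)}\in\uqb$, which is exactly the coideal property recorded after Definition \ref{def:qsp}. The highest-weight argument for the Cartan projection is straightforward once Lemma \ref{lem:sometechnical}(i) is invoked.
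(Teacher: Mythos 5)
Your proposal is correct and follows essentially the same route as the paper: the identical Sweedler/coideal computation for the first claim, and for the second the observation that $v\otimes v'=v_\lambda\otimes v_\mu+\mathrm{l.o.t.}$ forces a nonzero Cartan component. Your weight-space phrasing of the last step is just a slightly more explicit version of the paper's argument.
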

\begin{proof}
Let $b\in \uqb$. Then
\[b (v\otimes v')=b_{(1)}v\otimes b_{(2)}v'=\chi(b_{(1)})v\otimes b_{(2)}v'=v\otimes \chi(b_{(1)})b_{(2)}v'=v\otimes \rho_{\chi}(b)v'=\eta\circ \rho_\chi(b) v\otimes v'.\]
Thus, $v\otimes v'$ is a $\eta\circ \rho_\chi$-spherical vector for $\uqb$. Up to a scalar multiple, $v\otimes v'=v_\lambda\otimes v_\mu+\mathrm{l.o.t.}$. As $L(\lambda+\mu)\subset L(\lambda)\otimes L(\mu)$ is the $\uq$-module generated by $v_\lambda\otimes v_\mu$, it follows that the Cartan projection $\pi_{\lambda+\nu}(v\otimes v')$ is nonzero 
\end{proof}

\section{The case of rank one}\label{sec:rankone}
Recall from Table \ref{table:1} that the rank one quantum symmetric pairs of type Hermitian type are of type $\mathsf{AIV}=\mathsf{AIII_a}$ and $\mathsf{AI}=\mathsf{AIII_b}$. In this section we let $(\uq,\uqb)$ be a Hermitian quantum symmetric pair of rank one.
As in \cite{Wat23}, we give parameters so that the vectors spanning one-dimensional modules have crystal limits equal to the highest weight vector.

\subsection{The case $\mathsf{AI}$}\label{sec:AI}
In this subsection we consider the quantum symmetric pair $(\uq,\uqb_n)$ of type $\mathsf{AI}$. Recall from Notation \ref{rem:n} and Remark \ref{rem:par} that we take the parameters $c=q^{-1}$ and $s=[n]=\frac{q^n-q^{-n}}{q-q^{-1}}$, depending on an integer $n$. 
Recall that each integrable $\uq$-module is completely reducible as $\uqb_n$-module, cf. Proposition \ref{prop:complement}. 
The possible eigenvalues of the generator \[B=F+q^{-1}EK^{-1}+[n]K^{-1}\]
on integrable $\uq$-modules are of the form $\{[m]: m\in \Z\}$, cf. \cite[Lemma 4.5]{Mee24}. 
We label the integrable characters of $\uqb_n$ by $\Z$ as follows
\begin{equation}\label{eq:labelchar}
	\chi_l:\uqb_n\to \Q(q),\qquad \qquad \chi_l(B)=[l-n],\qquad l\in \Z.\end{equation}
Denote by $L(n)$ the highest weight $\uq$-module of weight $n\omega$.
The branching rules $[L(n)|_{\uqb_n}: \chi_l]$ are described by:
\begin{equation}\label{eq:branching}
	[L(n)|_{\mathbf{B}_n}: \chi_l]=\begin{cases}
		1&\text{if }|l|\leq n\text{ and } l\equiv n\mod 2\\
		0&\text{else}
	\end{cases}.
\end{equation}

We begin by investigating the \emph{bottom} vectors, i.e. those vectors $f_{l,n}\in L(|l|)$ that span a $\uqb_n$-module of type $\chi_l$. Consider the two dimensional $\uq$-module $L(1)$ with canonical basis $B(1)=\{v_1,v_2\}$. The direct computation of \cite[Lemma 5.1]{Mee25} shows that $f_{1,n}=v_1+q^{-n}v_2$ spans a vector of type $\chi_1$ in $L(1)$ and that $f_{-1,n}=v_1-q^{n}v_2$ spans a vector of type $\chi_{-1}$ in $L(1)$. 

Let $l>0$ and $e=\pm1 $. Using Lemma \ref{lem:shift}, the vector
\begin{equation}\label{eq:a1vector}
f_{el,n}:=f_{e,n}\otimes f_{e,n+e}\otimes \dots\otimes f_{e,n+e(l-1)}\in L(1)^{\otimes l}=L(l)\oplus\bigoplus _{j<l}m_jL(j)
\end{equation}
spans a vector of type $\chi_{el}$. By the branching rules of \eqref{eq:branching}, it follows that \[f_{e,n}\otimes f_{e,n+e}\otimes \dots\otimes f_{e,n+e(l-1)}\in L(l)\subset L(1)^{\otimes l}.\] 

\subsection{The case $\mathsf{AIV}$}\label{sec:AIII_a}
The approach for the quantum symmetric pair $(\uq,\uqb_n)$ of type $\mathsf{AIV^m}$ is analogous to that of $\mathsf{AI}$. Using Notation \ref{rem:n}, the parameters are described by $s=0$, $c_{1,n}=q^n$ and $c_{m,n}=(-1)^mq^{m-1-n}$, depending on an integer $n$.
By \cite[Lemma 4.16]{Mee24}, the integrable characters of $\uqb_n$ are labeled by $\Z$;
\begin{equation}\label{eq:chara}
\chi_l:\uqb_n\to \Q(q),\quad \chi_l(K_1K_m^{-1})=q^l,\quad \chi(B_i)=0,\quad \chi|_{\uq_\bullet}=\epsilon|_{\uq_\bullet},\quad \text{where}\quad i\in I_\circ,l\in \Z.
\end{equation}
Let $l>0$. The branching rules $[L(\lambda)|_{\uqb_n}: \chi_l]$ are described by
\begin{equation}\label{eq:branching2}
	[L(\lambda)|_{\mathbf{B}_n}: \chi_l]=\begin{cases}
		1&\text{if } \lambda=l\omega_1 +\mu \text{ and }\overline{\mu}=0\\
		0&\text{else}.
	\end{cases}.
\end{equation}
\begin{equation}\label{eq:branching3}
	[L(\lambda)|_{\mathbf{B}_n}: \chi_{-l}]=\begin{cases}
		1&\text{if }\lambda=-l\omega_n +\mu \text{ and }\overline{\mu}=0\\
		0&\text{else}.
	\end{cases}.\end{equation}

Consider \eqref{eq:branching2} and \eqref{eq:branching3}, then $f_1=v_1-c_{1,n}^{-1}v_m \in L(\omega_1)$ is a $\chi_1$-spherical vector and $f_{-1}=  v_1+(-1)^mc_{n,m}^{-1}v_m\in L(\omega_1)^\tau$ is a $\chi_{-1}$ spherical vector, cf. \cite[\S 5.1.2]{Mee25}. Here, $v_m=F_m\dots F_2F_1 v_1\in B(\omega_1)$ denotes the lowest weight vector.
Let $l>0$, then Lemma \ref{lem:shift} implies that 
\begin{equation}\label{eq:AIII_aector}
	f_{l,n}:=f_{1,n}\otimes f_{1,n+1}\otimes \dots\otimes f_{1,n+l-1}\in L(\omega_1)^{\otimes l}=L(l\omega_1)\oplus\bigoplus _{\mu< l\omega_1}m_\mu L(\mu)
\end{equation}
is a $\chi_{l}$-spherical vector. By the branching rules of \eqref{eq:branching2}, it follows that 
\[f_{1,n}\otimes f_{1,n+1}\otimes \dots\otimes f_{1,n+l-1}\in L(l\omega_1).\]
We have an analogous description for the vector $f_{-l,n}\in  L(l \omega_m)$.

\subsection{The behavior of spherical vectors at $q=\infty$}\label{sec:regular}
By the explicit computations of subsection \ref{sec:AI}-\ref{sec:AIII_a}, it follows that evaluation at $q=\infty$ of spherical vectors depends on the parameter and on the character. In this subsection we use this information to study based morphisms in rank one.

Let $l\in\Z$ and set $\mu_l\in X^+$ equal to
\begin{equation}\label{eq:mul}
	\mu_l=|l|\omega_1\qquad \text{in type }\mathsf{AI},\qquad\text {and}\qquad
	\mu_l=\begin{cases}
		l\omega_1&\text{if }l\geq0\\
		-l\omega_n&\text{if }l<0
	\end{cases}
	\qquad \text{in type }\mathsf{AIV}.
\end{equation}

\begin{notation}\label{not:chil}
Let $l\in \Z$. Denote $\chi_l:\uqb_n\to \Q(q)$ the character of subsections \ref{sec:AI}-\ref{sec:AIII_a}. Moreover, denote $f_{n,l}=v_\lambda+\mathrm{l.o.t}\in L(\mu_l)$ the $\chi_l$-spherical vector for $\uqb_n$. The existence of $f_{n,l}$ is assured by subsections \ref{sec:AI}-\ref{sec:AIII_a}.
\end{notation}

Let $l\geq 0$ and $e=\pm1$. Using Notation \ref{not:cartan} we denote $\pi_{\mu_{el}}: L(\mu_{e})^{\otimes l}\to L(\mu_{el})$ the repeated application of Cartan projections. 

\begin{lemma}\label{lem:injective}
The spherical vector $ f_{l,n}\in L(\mu_l)$ of $\uqb_n$
satisfies $\pi_{\mu_l}(f_{l,n})\equiv_\infty v_{\mu_l}$ if and only if
\begin{enumerate}
	\item In case $\mathsf{AI}$,
	$n> 0$ in case $l> 0$ and $0> n$ in case $l<0$.
	\item In case $\mathsf{AIV^m}$, 
	$n> 0$ in case $l> 0$ and $m-1>n$ in case $l<0$.
\end{enumerate}
\end{lemma}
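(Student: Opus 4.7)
The plan is to reduce the claim to a single-variable regularity condition on $n$ using the tensor product realization of $f_{l,n}$ from \eqref{eq:a1vector} (and its type $\mathsf{AIV^m}$ analog \eqref{eq:AIII_aector}) together with Lemma \ref{lem:preserveaform}. I will treat case $\mathsf{AI}$ with $l > 0$ in detail; the remaining cases proceed in parallel using the rank-one vectors of Sections \ref{sec:AI}--\ref{sec:AIII_a}.

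For the \emph{if} direction, suppose $n > 0$. Each rank-one factor $f_{1,n+k} = v_1 + q^{-(n+k)} v_2 \in L(1)$ has $n+k \geq 1$, so its coefficient of $v_2$ lies in $q^{-1}\mathbf{A}_\infty$, i.e.\ $f_{1,n+k} \equiv_\infty v_1$ in $\mathcal L(1)$. Factoring $\pi_{\mu_l}$ as a composition of binary Cartan projections $L(\mu_{j-1}) \otimes L(1) \to L(\mu_j)$ and iterating Lemma \ref{lem:preserveaform} then gives $\pi_{\mu_l}(f_{l,n}) \equiv_\infty v_{\mu_l}$.

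For the \emph{only if} direction, suppose $n \leq 0$, and I will exhibit a canonical basis coefficient of $\pi_{\mu_l}(f_{l,n})$ that does not lie in $q^{-1}\mathbf{A}_\infty$. Expanding $f_{l,n}$, the weight-$(\mu_l-\alpha)$ component equals $\sum_{k=0}^{l-1} q^{-(n+k)} e_k$ with $e_k := v_1^{\otimes k} \otimes v_2 \otimes v_1^{\otimes (l-1-k)}$. From the iterated coproduct one has $Fv_{\mu_l} = \sum_k q^{-k} e_k$ in $L(1)^{\otimes l}$. Since $\{e_k\}$ is orthonormal under the contravariant form and distinct irreducible summands of $L(1)^{\otimes l}$ are mutually orthogonal, the projection onto the one-dimensional weight-$(\mu_l - \alpha)$ subspace of $L(\mu_l)$ reads
\[
\pi_{\mu_l}(e_k) \;=\; \frac{(e_k, Fv_{\mu_l})}{(Fv_{\mu_l}, Fv_{\mu_l})}\, Fv_{\mu_l} \;=\; \frac{q^{-k}}{\sum_{j=0}^{l-1} q^{-2j}}\, Fv_{\mu_l}.
\]
Summing, the coefficient of $Fv_{\mu_l}$ in $\pi_{\mu_l}(f_{l,n})$ telescopes to $q^{-n}$, which fails to lie in $q^{-1}\mathbf{A}_\infty$ for $n \leq 0$, so $\pi_{\mu_l}(f_{l,n}) \not\equiv_\infty v_{\mu_l}$.

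The remaining cases are handled identically. For $\mathsf{AI}$ with $l < 0$ one uses the factors $f_{-1,k} = v_1 - q^k v_2$; the analogous telescoping produces the coefficient $-q^n$ and the condition $n < 0$. For $\mathsf{AIV^m}$ with either sign of $l$, the rank-one factors involve the scaling parameters $c_{1,k}$ and $c_{m,k}$ of Remark \ref{rem:par} together with the lowest-weight vector $v_m$ of $L(\omega_1)$; the analogous telescoping then produces $q^{-n}$ and $q^{n+1-m}$ respectively, giving the stated inequalities. The technical heart of the argument is the telescoping identity, which relies on the exact matching between the exponent pattern in the definition of $f_{l,n}$ and the iterated coproduct formula for $F$; once this matching is recorded, the lemma reduces to a trivial regularity check at $q = \infty$. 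The main bookkeeping obstacle is therefore confirming, case by case through Table \ref{table:1} and Remark \ref{rem:par}, that the scaling parameters $c_{i,n}$ are precisely those for which the exponent matching persists.
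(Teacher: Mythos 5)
Your \emph{if} direction is exactly the paper's argument (iterate Lemma \ref{lem:preserveaform} through the binary Cartan projections), and is fine. The problem is the \emph{only if} direction, whose entire weight rests on the identity $Fv_{\mu_l}=\sum_k q^{-k}e_k$ and the resulting telescoping. That identity is not verified, and with the coproduct convention the paper actually fixes (Jantzen, Ch.~4: $\triangle(F)=F\otimes K^{-1}+1\otimes F$) it is false: the iterated coproduct gives $Fv_1^{\otimes l}=\sum_{k}q^{-(l-1-k)}e_k$, so the exponents run \emph{opposite} to the exponents $q^{-(n+k)}$ appearing in $f_{l,n}=f_{1,n}\otimes\cdots\otimes f_{1,n+l-1}$. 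The numerator of your projection formula then becomes $\sum_k q^{-(n+k)}q^{-(l-1-k)}=l\,q^{-(n+l-1)}$, which does not telescope, and the resulting coefficient is a unit multiple of $l\,q^{-(n+l-1)}$; for $l\geq 2$ this lies in $q^{-1}\mathbf{A}_\infty$ for a range of $n\leq 0$, so the coefficient you exhibit fails to witness $\pi_{\mu_l}(f_{l,n})\not\equiv_\infty v_{\mu_l}$, and the argument proves nothing in those cases. You flag the ``exact matching'' as the technical heart of the proof but never check it against the stated conventions; as written, the only-if direction is not established. (For comparison, the paper disposes of this direction by reducing, again via Lemma \ref{lem:preserveaform} and the triangularity of the $\diamondsuit$-basis, to the single-factor statements $f_{1,n+k}\equiv_\infty v_{\mu_1}$, which are read off from the explicit rank-one formulas of Sections \ref{sec:AI}--\ref{sec:AIII_a}; no tensor-leg bookkeeping of the kind you attempt is needed.)

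Two further points. First, even granting the telescoping, your treatment of type $\mathsf{AIV^m}$ is only asserted: there the second canonical basis vector involved is the lowest-weight vector $v_m=F_m\cdots F_2F_1v_1$, which sits many simple-root drops below $v_1$, so the relevant weight component of $L(\omega_1)^{\otimes l}$ is not the single-root drop $\mu_l-\alpha$, the set of monomials replacing your $e_k$ is larger, and the orthonormality and one-dimensionality claims you use must be re-justified rather than carried over ``identically.'' Second, be careful with the phrase ``the projection onto the one-dimensional weight-$(\mu_l-\alpha)$ subspace'': you need that the Cartan projection restricted to that weight space of the tensor product agrees with orthogonal projection onto $\operatorname{span}(Fv_{\mu_l})$, which holds because distinct isotypic components are orthogonal for the $\varrho$-contravariant product form; you should say this explicitly, since it is the only reason the inner-product formula computes $\pi_{\mu_l}$ at all.
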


\begin{proof}
Repeated application of Lemma \ref{lem:preserveaform} shows that $\pi_{\mu_l}(f_{l,n})\equiv_\infty v_{\mu_l}$ if and only if $f_{1,n+k}\equiv_{\infty} v_{\mu_1}$ for all $0\leq k \leq l-1$. The conditions on $n$ then follow by the explicit description of the spherical vectors, as determined in subsections \ref{sec:AI}-\ref{sec:AIII_a}.
\end{proof}

\begin{lemma}\label{lem:basedrankone}
The $\uqb_n$-module $V_{\chi_l}$ carries a up to scalar multiple unique structure of based $\uqb_n$-module.
\end{lemma}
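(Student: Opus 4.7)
The plan is to take $B = \{1\}$ as the distinguished basis of $V_{\chi_l}$ and verify the four axioms of Definition \ref{def:Bbased}. Conditions (i) (weight decomposition) and (iv) (crystal basis) are essentially tautological for a one-dimensional module: $\mathcal L/q^{-1}\mathcal L$ automatically has $\{1+q^{-1}\mathcal L\}$ as a $\Q$-basis, and the $\imath$weight of $1$ is read off from the character formula --- trivially $0$ in type $\mathsf{AI}$ since $Y^\imath=0$, and from \eqref{eq:chara} in type $\mathsf{AIV^m}$. For condition (iii) I define $\psi^\imath\colon V_{\chi_l}\to V_{\chi_l}$ to fix $1$ and act by $q\mapsto q^{-1}$ on scalars; compatibility with the $\imath$bar involution on $\uqb_n$ reduces to checking $\chi_l\circ\psi^\imath=\overline{(\cdot)}\circ\chi_l$ on a set of algebra generators. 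This holds immediately: $\chi_l(B)=[l-n]$ is bar-invariant in type $\mathsf{AI}$, while in type $\mathsf{AIV^m}$ each $B_i$ is killed by $\chi_l$, the restriction $\chi_l|_{\uq_\bullet}$ agrees with the (bar-invariant) counit, and $\chi_l(K_{\pm h})$ are mutually bar-conjugate powers of $q$.

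The substantive step is condition (ii), namely $\chi_l\bigl({_\mathcal A\dot{\uqb}_n}\bigr)\subset\mathcal A$. A direct verification on the generators \eqref{eq:genaform} would require wading through the parity-dependent recursion of Definition \ref{def:idiv} for the $\imath$divided powers. Instead I exploit the explicit spherical vector $f_{l,n}$ produced in subsections \ref{sec:AI}--\ref{sec:AIII_a}. By construction $f_{l,n}$ is a tensor product of rank-one factors of the form $v_1\pm q^{\mp(n+k)}v_2$ (type $\mathsf{AI}$) or $v_1-c_{1,n+k}^{-1}v_m$ (type $\mathsf{AIV^m}$), so $f_{l,n}$ lies in the tensor-product integral form ${_\mathcal A}\bigl(L(\omega)^{\otimes|l|}\bigr)$ (respectively ${_\mathcal A}\bigl(L(\omega_1)^{\otimes|l|}\bigr)$) and has coefficient $1$ on the highest-weight canonical-basis element $v_\omega^{\otimes|l|}$. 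Since ${_\mathcal A\dot{\uqb}_n}$ preserves the integral form of any integrable $\uq$-module by \eqref{eq:dotaform}, applying $x\in{_\mathcal A\dot{\uqb}_n}$ to the identity $x\cdot f_{l,n}=\chi_l(x)f_{l,n}$ and reading off the coefficient of $v_\omega^{\otimes|l|}$ in the canonical basis of the ambient tensor power yields $\chi_l(x)\in\mathcal A$.

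For uniqueness, any other basis of the one-dimensional space $V_{\chi_l}$ is of the form $\{c\cdot 1\}$ for some $c\in\Q(q)^\times$; axiom (iii) forces $\overline{c}=c$, so the based structure is determined up to a bar-invariant scalar. I expect the only real obstacle to be condition (ii); the key move is to circumvent the recursive definition of the $\imath$divided powers by leveraging the tensor-product construction of $f_{l,n}$ together with the fact that its leading coefficient in the ambient canonical basis is $1$.
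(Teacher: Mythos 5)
Your proposal is correct in substance, and for the one step that actually carries content --- axiom (ii), the stability of $\mathcal A\cdot 1$ under ${_\mathcal A\dot{\uqb}_n}$ --- you take a genuinely different route from the paper. The paper argues directly on the generators \eqref{eq:genaform}: $E_j\boldsymbol{1}_\zeta$ acts by zero, and $B_{i,\overline{\mu_l}}^{(a)}$ acts on the spherical line as a polynomial in $B_i$ with $\mathcal A$-coefficients evaluated at $\chi_l(B_i)\in\mathcal A$, so the integrality is read off from Definition \ref{def:idiv} (together with the known integrality of the $\imath$divided powers). You instead realize the character inside ${_\mathcal A}\bigl(L(\omega)^{\otimes|l|}\bigr)$ via the explicit tensor-product spherical vector $f_{l,n}$, whose leading coefficient on the highest-weight canonical basis element is $1$, and then invoke the defining property \eqref{eq:dotaform} of ${_\mathcal A\dot{\uqb}_n}$ to extract $\chi_l(x)\in\mathcal A$ as that coefficient. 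This is a clean way to bypass the parity-dependent recursion entirely; its cost is that you must know (a) that the tensor power with its canonical-basis $\mathcal A$-form is among the modules quantified over in \eqref{eq:dotaform}, which holds because tensor products of based modules are based, and (b) that $\mathrm{span}_{\Q(q)}\{f_{l,n}\}$ and $V_{\chi_l}$ are isomorphic as \emph{weight} $\uqb_n$-modules (same $\imath$weight $\overline{\mu_l}$), so that the scalar by which $x\in{_\mathcal A\dot{\uqb}_n}$ acts is the same on both; you should say this explicitly, since $\dot{\uqb}_n$ only acts through the weight structure. Your verification of axiom (iii) on generators is fine and in fact more explicit than the paper's. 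Two small inaccuracies that do not affect the conclusion: in type $\mathsf{AI}$ the $\imath$weight of $1$ should be taken to be $\overline{\mu_l}\in X/2X$ (needed later for the maps $L(\mu_l)\to V_{\chi_l}$ to be weight-module morphisms), not declared to be $0$; and axiom (iii) does \emph{not} force $\overline{c}=c$ for an alternative basis $\{c\cdot 1\}$ --- the compatibility condition $\chi_l\circ\psi^\imath=\overline{(\cdot)}\circ\chi_l$ is independent of $c$ --- but uniqueness up to scalar is immediate anyway since every basis of a one-dimensional space is a scalar multiple of $\{1\}$ and the associated lattices and involution rescale accordingly.
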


\begin{proof}
Fixing a non-zero vector $v\in V_{\chi_l}$ uniquely determines the conditions $(i)$, $(iii)$ and $(iv)$ in Definition \ref{def:Bbased}. By Lemma \ref{lem:sometechnical}, $v$ is a weight vector of weight $\overline{\mu_l}$, making $V_{\chi_l}$ a weight $\uqb_n$-module.
We remain to verify that 
\begin{equation}\label{eq:preserve2}
	_\mathcal A \dot{\uqb}_n v\subset \mathcal Av.\end{equation}
We recall by \eqref{eq:genaform} that, as an $\mathcal A$-algebra, $_\mathcal A \dot{\uqb}_n$ is generated by the elements $B_{i,\zeta}^{(a)}$ and $E_j \boldsymbol{1}_{\zeta}$ for $\zeta\in X_\imath$, $i\in I_\circ $, $j\in I_\bullet$ and $a\geq0$. Thus it is sufficient to check that these generators preserve \eqref{eq:preserve2}. We have $\boldsymbol{1}_\zeta v=0$ if $\zeta\neq \overline{\mu_l}$. Now, consider the case $\zeta=\overline{\mu_l}$.
Let $i\in I_\circ$, $j\in I_\bullet$ and $a\geq 0$. Indeed \eqref{eq:preserve2} holds for $E_j 1_{\overline{\mu_l}}$ as $E_j v=0$ by \eqref{eq:chara}. Moreover, by Definition \ref{def:idiv}, the action of $B_{i,\overline{\mu_l}}^{(a)}$ on $v$ equals the action of an polynomial in $B_i$ with coefficients in $\mathcal A$. By \eqref{eq:chara} and \eqref{eq:labelchar}, $\chi_l(B_i)$ has its values in $\mathcal A$. It thus follows that $B_{i,\overline{\mu_l}}^{(a)}v\in \mathcal Av$, which concludes that \eqref{eq:preserve2} holds.
\end{proof}

In rank one, we recall that the monoid of spherical weights $\breve{X}^+$ has a single generator $\varpi$. 

\begin{lemma}\cite[Lemma 4.1.2, Lemma 4.1.1]{Wat23}\label{lem:wat}
	There exists $w_0 \in  L(\varpi)$ such that $\text{span}_{\Q(q)} \{w_0\} \cong L(0)$ as $\uqb_n$-modules and $w_0\equiv_\infty v_\varpi$.
\end{lemma}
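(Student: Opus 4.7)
The existence of $w_0$ spanning a trivial $\uqb_n$-submodule is purely structural: by Proposition \ref{prop:complement}, $L(\varpi)$ is completely reducible as a $\uqb_n$-module, and since $\varpi\in\breve{X}^+$, the branching rules \eqref{eq:branching}, \eqref{eq:branching2}, \eqref{eq:branching3} show that $\chi_0$ occurs in $L(\varpi)$ with multiplicity one. Lemma \ref{lem:sometechnical}(i) then permits the normalization $w_0=v_\varpi+\mathrm{l.o.t.}$, which in particular determines $w_0$ uniquely up to a scalar.

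To establish $w_0\equiv_\infty v_\varpi$, the plan is to realize $w_0$ explicitly through the tensor-product construction of Subsections \ref{sec:AI}-\ref{sec:AIII_a}. In both rank one Hermitian types the generator $\varpi$ decomposes as $\varpi=\mu_1+\mu_{-1}$: namely $\varpi=2\omega$ in type $\mathsf{AI}$ and $\varpi=\omega_1+\omega_m$ in type $\mathsf{AIV^m}$. By Lemma \ref{lem:shift} one can pair a $\chi_1$-spherical vector $f\in L(\mu_1)$ with a spherical vector $f'\in L(\mu_{-1})$ for the shifted coideal $\rho_{\chi_1}(\uqb_n)$ to produce a $\chi_0$-spherical vector $f\otimes f'\in L(\mu_1)\otimes L(\mu_{-1})$. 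Its Cartan projection $\pi_\varpi(f\otimes f')$ is a non-zero (again by Lemma \ref{lem:shift}) $\chi_0$-spherical vector in $L(\varpi)$, so by multiplicity one it is a non-zero scalar multiple of $w_0$.

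The $q=\infty$ behavior then follows from Lemma \ref{lem:preserveaform}: if both factors satisfy $f\equiv_\infty v_{\mu_1}$ and $f'\equiv_\infty v_{\mu_{-1}}$, then $\pi_\varpi(f\otimes f')\equiv_\infty v_\varpi$. By Lemma \ref{lem:injective}, the required $\equiv_\infty$-behavior of $f_{1,n}$ is controlled by an explicit inequality on $n$, and a similar condition controls the behavior of the second factor in terms of the shifted parameter produced by $\rho_{\chi_1}$.

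\textbf{Main obstacle.} The two conditions of Lemma \ref{lem:injective} must be coupled for an arbitrary fixed $n\in\Z$, and the shifted parameter of $\rho_{\chi_1}(\uqb_n)$ is determined by the isomorphism $\rho_{\chi_1}$ and need not automatically satisfy the inequality required by Lemma \ref{lem:injective} on the second factor. For values of $n$ outside the favorable range the fallback is a direct computation on $L(\varpi)$: one imposes $E_jw_0=F_jw_0=0$ for $j\in I_\bullet$ and $B_iw_0=\chi_0(B_i)w_0$ for $i\in I_\circ$, which by Lemma \ref{lem:sometechnical}(i) reduces to a small triangular linear system for the expansion coefficients of $w_0$ in the canonical basis of $L(\varpi)$. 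The explicit choice of parameters in Remark \ref{rem:par} is precisely what ensures these coefficients lie in $q^{-1}\Z[q^{-1}]\cdot\mathbf{A}_\infty$, yielding $w_0\equiv_\infty v_\varpi$.
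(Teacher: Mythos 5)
First, a point of comparison: the paper does not actually prove this lemma. It is imported wholesale from \cite[Lemmas 4.1.1 and 4.1.2]{Wat23}, and the only new content is the remark immediately following the statement, namely that the $K$-matrix still exists for the generalized parameters of Remark \ref{rem:par} by \cite[(5.17) \& Corollary 7.7]{Ko19}, so that Watanabe's proof carries over verbatim. Your proposal is therefore a genuinely different route: a self-contained construction in place of a citation. Its first half --- existence of $w_0$ via Proposition \ref{prop:complement}, the branching rules, and the normalization $w_0=v_\varpi+\mathrm{l.o.t.}$ from Lemma \ref{lem:sometechnical} --- is fine. The second half, which is the actual content of the lemma, has a genuine gap.

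Your primary route requires \emph{both} tensor factors to be $\equiv_\infty$ their highest weight vectors, and you correctly observe that this couples the two alternatives in Lemma \ref{lem:injective}. But the coupling is not merely ``unfavorable for some $n$'': it is incompatible for essentially every $n$. In type $\mathsf{AI}$ the factor $f_{1,n}$ requires $n>0$, while the second factor is a $\chi_{-1}$-type vector for the shifted coideal, whose parameter is $n+1$ (cf.\ the pattern in \eqref{eq:a1vector}) and which by Lemma \ref{lem:injective} would need a \emph{negative} parameter; these conditions cannot hold simultaneously. So the tensor argument never applies and the whole burden falls on your ``fallback''. That fallback is not a proof: the assertion that the coefficients of $w_0$ in $B(\varpi)$ land in $q^{-1}\mathbf{A}_\infty$ ``because of the choice of parameters'' is precisely what the lemma claims, and it is the delicate point. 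Already in the smallest case (type $\mathsf{AI}$, $L(\varpi)=L(2\omega)$, $c=q^{-1}$, $s=[n]$) solving the eigenvalue equation for the $\chi_0$-spherical vector $v_\varpi+a\,Fv_\varpi+b\,F^{(2)}v_\varpi$ forces $a=\pm[n](q-q^{-1})/(q+q^{-1})=\pm(q^{n}-q^{-n})/(q+q^{-1})$, which does \emph{not} lie in $q^{-1}\mathbf{A}_\infty$ once $|n|\geq 1$. So the computation does not ``work out'' for free, and whatever rescues the statement for general $n$ --- in \cite{Wat23} it is the $K$-matrix and control of its behavior at $q=\infty$, and in this paper it is arguably an implicit parameter restriction as in Theorem \ref{thm:rankone} --- is entirely absent from your argument. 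In type $\mathsf{AIV^m}$ the situation is worse: $L(\omega_1+\omega_m)$ is the adjoint representation, whose dimension grows with $m$, so the ``small triangular linear system'' is not small and a case-free argument cannot be avoided. In short, the existence part is correct, but the statement $w_0\equiv_\infty v_\varpi$ is not established.
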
 

Note that the existence of the \emph{$K$-matrix} is assured by the choice of parameters from Remark \ref{rem:par} and \cite[(5.17) \& Corollary 7.7]{Ko19}. Thus, the proof of \cite[Lemma 4.1.2, Lemma 4.1.1]{Wat23} remains valid for the parameters of Remark \ref{rem:par}.

Recall the weight $\mu_l$ from \eqref{eq:mul}.

\begin{theorem}\label{thm:rankone}
Let $n\in \Z$ so that the conclusion of Lemma \ref{lem:injective} holds. Then, for each $k \geq 0$ there exists a unique 
morphism $g_k : L(\mu_l+k\varpi) \to V_{\chi_l}$ of based $\uqb_n$-modules such that $g_k(v_{\mu_l+k\varpi}) =1.$
\end{theorem}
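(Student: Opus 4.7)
The plan is to construct an explicit $\chi_l$-spherical vector $\tilde{f}_k\in L(\mu_l+k\varpi)$ whose crystal limit at $q=\infty$ equals $v_{\mu_l+k\varpi}$, and then to define $g_k$ as the projection onto the $\chi_l$-isotypic component, finally verifying the based-morphism axioms via Lemma \ref{lem:equivbased}. This is the rank-one analogue of the strategy of \cite[Proposition 4.2.2]{Wat23}, adapted from the trivial character to $\chi_l$.

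First, I would iterate the tensor-product construction. Starting from the spherical vector $f_{l,n}\in L(\mu_l)$ of Notation \ref{not:chil} and Watanabe's $w_0\in L(\varpi)$ of Lemma \ref{lem:wat}, the tensor $f_{l,n}\otimes w_0^{\otimes k}\in L(\mu_l)\otimes L(\varpi)^{\otimes k}$ spans a one-dimensional $\uqb_n$-submodule of type $\chi_l$ by iterated application of Lemma \ref{lem:shift} (after the appropriate bookkeeping with shifts of basepoint, using that the trivial character is fixed by these shifts). Setting
\[\tilde{f}_k := \pi_{\mu_l+k\varpi}\bigl(f_{l,n}\otimes w_0^{\otimes k}\bigr)\in L(\mu_l+k\varpi),\]
Lemma \ref{lem:shift} ensures $\tilde{f}_k\neq 0$ and that it is $\chi_l$-spherical. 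Iterating Lemma \ref{lem:preserveaform} together with $f_{l,n}\equiv_\infty v_{\mu_l}$ (Lemma \ref{lem:injective}) and $w_0\equiv_\infty v_\varpi$ (Lemma \ref{lem:wat}) yields the key crystal property $\tilde{f}_k\equiv_\infty v_{\mu_l+k\varpi}$.

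Next, complete reducibility comes in. By Proposition \ref{prop:complement}, $L(\mu_l+k\varpi)$ decomposes as a $\uqb_n$-module, and by the branching rules of Subsections \ref{sec:AI}--\ref{sec:AIII_a} the multiplicity of $\chi_l$ equals one, so the $\chi_l$-isotypic is exactly $\Q(q)\tilde{f}_k$. Define $g_k$ to be the unique $\uqb_n$-equivariant projection onto this isotypic, normalised so that $g_k(v_{\mu_l+k\varpi})=1$. Such a normalisation is possible because the congruence $\tilde{f}_k\equiv_\infty v_{\mu_l+k\varpi}$ forces the $\tilde{f}_k$-component of $v_{\mu_l+k\varpi}$ to be congruent to $1$ modulo $q^{-1}\mathbf{A}_\infty$, and in particular nonzero. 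Uniqueness of $g_k$ is then immediate from the multiplicity-one statement.

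The final step is to verify the four conditions of Lemma \ref{lem:equivbased}. Condition (iv) is vacuous since $V_{\chi_l}$ is one-dimensional; condition (iii) reduces to $\psi^\imath\tilde{f}_k=\tilde{f}_k$, which follows from Lemma \ref{lem:sometechnical}(ii) after adjusting $\tilde{f}_k$ by a sign. The main obstacle I expect is in verifying the lattice stabilities (i) $g_k(\mathcal{L})\subset\mathbf{A}_\infty$ and (ii) $g_k({_\mathcal{A}}L(\mu_l+k\varpi))\subset{_\mathcal{A}}V_{\chi_l}$. To handle these I would exploit the observation that, once $\psi^\imath$-normalised, $\tilde{f}_k$ is a $\psi^\imath$-fixed element of $\mathcal{L}$ that is $\equiv_\infty v_{\mu_l+k\varpi}$; by the uniqueness characterisation in Example \ref{ex:BW}, this forces $\tilde{f}_k$ to coincide with the $\imath$canonical basis element $v_{\mu_l+k\varpi}^\imath$. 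A triangularity argument comparing the isotypic decomposition with the $\imath$canonical basis of $L(\mu_l+k\varpi)$ then shows that the matrix entries of $g_k$ lie in $\mathbf{A}_\infty\cap\mathcal{A}$, yielding (i) and (ii).
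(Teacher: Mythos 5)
Your construction of $\tilde f_k$ and the congruence $\tilde f_k\equiv_\infty v_{\mu_l+k\varpi}$ are sound and essentially match the paper's strategy (tensoring the bottom spherical vector with Watanabe's $w_0$ and applying Lemmas \ref{lem:shift} and \ref{lem:preserveaform}). The gaps are in your verification of Lemma \ref{lem:equivbased}. Condition (iv) is not vacuous for a one-dimensional target: it is exactly the assertion that at most one element of the crystal basis of $L(\mu_l+k\varpi)$ has nonzero image in $\mathcal L_{V_{\chi_l}}/q^{-1}\mathcal L_{V_{\chi_l}}$, i.e.\ that $g_k(b)\in q^{-1}\mathbf{A}_\infty$ for every canonical basis element $b\neq v_{\mu_l+k\varpi}$. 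This is a substantive computation; it follows from the explicit formula $g_k(w)=(w,\tilde f_k)/(\tilde f_k,\tilde f_k)$ (valid because $\varrho(\uqb_n)=\uqb_n$ makes the isotypic components orthogonal for the contravariant form) together with $\tilde f_k\equiv_\infty v_{\mu_l+k\varpi}$ and the near-orthonormality of the canonical basis, but you do not supply it. The same formula is also what justifies the nonvanishing of the $\chi_l$-component of $v_{\mu_l+k\varpi}$: the congruence $\tilde f_k\equiv_\infty v_{\mu_l+k\varpi}$ alone does not force that component to be $\equiv 1$, since a priori the other isotypic summands could contribute to the top weight space.

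More seriously, your route to conditions (i) and (ii) rests on the claim that $\tilde f_k=v^\imath_{\mu_l+k\varpi}$. The uniqueness in Example \ref{ex:BW} characterises $b^\imath$ among $\psi^\imath$-fixed vectors whose lower-order coefficients lie in $q^{-1}\Z[q^{-1}]$; being $\psi^\imath$-fixed and lying in $v_{\mu_l+k\varpi}+q^{-1}\mathcal L$ does not suffice (for instance $(q+q^{-1})/(q^2+1+q^{-2})$ is a nonzero bar-fixed element of $q^{-1}\mathbf{A}_\infty$). Worse, the conclusion would force $\tilde f_k\in{_\mathcal A L(\mu_l+k\varpi)}$, which is precisely the delicate integrality question the paper isolates: in general only membership in the dual integral form ${_\mathcal A L^{\mathrm{up}}}$ is established (Lemma \ref{lem:dualcan}), and membership in ${_\mathcal A L}$ is proved only for the bottom case using that $\mu_{\pm1}$ is quasi-minuscule (Theorem \ref{lem:integral1}). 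So (ii) cannot be obtained this way. The paper sidesteps the issue entirely: since $g_k$ is $\uqb_n$-equivariant, ${_\mathcal A L(\mu_l+k\varpi)}={_\mathcal A\dot{\uqb}}\,v_{\mu_l+k\varpi}$ and ${_\mathcal A\dot{\uqb}}$ preserves $\mathcal A\cdot 1\subset V_{\chi_l}$ (Lemma \ref{lem:basedrankone}), so $g_k({_\mathcal A L(\mu_l+k\varpi)})\subset\mathcal A$ without any knowledge of the integrality of $\tilde f_k$ itself. You should replace your final paragraph with these two arguments.
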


\begin{remark}
The proof of Theorem \ref{thm:rankone} follows the approach of \cite[Proposition 4.6.1]{Wat23}.
\end{remark}

\begin{proof}
Recall by Lemma \ref{lem:basedrankone} that $V_{\chi_l}$ carries a up to scalar multiple unique structure of based $\uqb_n$-module. Fix the based $\uqb_n$-module $(V_{\chi_l},\{1\})$.

We first consider the induction base $k=0$. 
Consider the vector $v= f_{l,n}$ from Definition \ref{not:chil}. Using Lemma \ref{lem:sometechnical}, set 
$W_0= \mathrm{span}_{\Q(q)}\{v\}$ and denote $W_1\subset L(\mu_l)$ the complement of $W_0$ with respect to the inner product of subsection \ref{sec:base}. As $(v_\lambda,v)=1$, we have $v_\lambda=cv+w_1$ with $w_1\in W_1$ and $c=\frac{1}{(v,v)}\in 1+\mathbf{A}_\infty$.
Proposition \ref{prop:complement} implies that the map $g_0$ arises as
\[g(v)=c^{-1},\quad g(W_1)=\{0\}.\]
It thus follows that $g_0$ preserves crystal lattices, and we have $g(v_\lambda)=1$. This moreover shows that $g_0$ preserves $\mathcal A$-forms and that the induced map \eqref{eq:L} satisfies 
\[b+q^{-1}\mathcal L\mapsto \delta_{b,v_\lambda}+q^{-1}\mathcal L,\qquad b\in B(\mu_l).\]
Recall from Lemma \ref{lem:sometechnical} that $\Upsilon \overline{v}=av$, for a nonzero scalar $a$. As the complement $\mathrm{span}_{\Q(q)}\{v\}^\perp$ is $\imath$bar invariant, we conclude that
\[g_0\circ \psi^\imath_{L(\mu_l)}=a'\, \psi^\imath_{V_{\chi_l}}\circ g,\qquad \text{where}\qquad a'\in \Q(q).\]
Because \[g_0(\psi^\imath(v_\lambda))=g_0(v_\lambda)=(v_\lambda,v)=1=\psi^\imath(1),\]
it follows that $a'=1$.
Thus, we may apply Lemma \ref{lem:equivbased} to conclude that $g_0$ is based.

With use of Lemma \ref{lem:shift}, the induction step is shown analogous to \cite[Proposition 4.6.1]{Wat23}.
\end{proof}

\section{Integrable characters over $\Q(q)$}\label{Sec:Char}
In this section we classify the integrable characters and relate their branching rules to those over the bigger field $k=\overline{\C(q)},$ the algebraic closure of $\C(q)$. In this section we let $(\uq,\uqb_n)$ be a Hermitian quantum symmetric pair of general rank. We recall from Notation \ref{rem:n}  that the parameters of $\uqb_n$ depend on an integer $n$. Furthermore, recall the restricted root system from \eqref{eq:restroot} and Table \ref{table:1}.
\begin{proposition}\label{prop:fieldextend}
	For each $l\in \Z$, there exists a character $\chi_l: \uqb_n\to \Q(q)$ with:
	\begin{enumerate}
		\item For all $ i\in I_{\scriptscriptstyle\otimes},j\in I_\circ\setminus I_{\scriptscriptstyle\otimes}$
		\[
		\chi_l(K_jK_{\tau j}^{-1})=1,\quad \chi_l(B_i)=[l-n]_{i},\quad \chi_l(B_j)=0,\quad \chi_l|_{\uq_\bullet}=\epsilon
		\]
		in case the restricted root system $\Sigma$ is reduced.
		\item For all $i \in I_{\scriptscriptstyle\otimes},j\in I_\circ\setminus I_{\scriptscriptstyle\otimes}$ with $i\succeq \tau i$
		\[\chi_l(K_iK_{\tau i}^{-1})=q_i^l,\quad \chi_l(K_jK_{\tau j}^{-1})=1,\quad \chi_l(B_j)=0,\quad \chi_l|_{\uq_\bullet}=\epsilon\]
		if the restricted root system $\Sigma$ is not reduced. 
	\end{enumerate}
Moreover, the characters $\chi_l$ are pairwise inequivalent and provide a classification of the integrable characters.
\end{proposition}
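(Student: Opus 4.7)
The plan is to combine the explicit rank-one spherical vector construction from Section \ref{sec:rankone} with descent from the classification over $k=\overline{\C(q)}$ carried out in \cite{Mee24}.

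\textbf{Step 1 (Rank-one foundation).} Fix $i_0 \in I_{\scriptscriptstyle\otimes}$ with $i_0 \succeq \tau i_0$. The rank-one subpair $(\uq_{i_0},\uqb_{i_0,n})$ of Definition \ref{def:rank} is of Hermitian type (type $\mathsf{AIII_b}$ when $\Sigma$ is reduced, type $\mathsf{AIII_a}$ otherwise). Sections \ref{sec:AI}--\ref{sec:AIII_a} furnish, for every $l\in\Z$, a $\Q(q)$-valued character of $\uqb_{i_0,n}$ whose values agree with the formulas in the proposition restricted to the rank-one subcoideal, together with an explicit spherical vector $f_{l,n} = v_{\mu_l} + \mathrm{l.o.t.} \in L(\mu_l)$.

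\textbf{Step 2 (Promotion to all of $\uqb_n$).} I would argue that $f_{l,n}$, viewed inside the ambient $\uq$-module $L(\mu_l)$, spans a $\uqb_n$-stable line on which the remaining generators act by the prescribed scalars. The key observation is that the weight $\mu_l$, equal to $|l|\omega_1$, $l\omega_1$ or $-l\omega_n$ in the respective cases of \eqref{eq:mul}, is orthogonal to every simple coroot $\alpha_j^\vee$ with $j \in I_\bullet \cup (I_\circ \setminus I_{\scriptscriptstyle\otimes})$. Consequently, for $j \in I_\bullet$ the simple generators act trivially on every weight space supporting $f_{l,n}$, yielding $\chi_l|_{\uq_\bullet} = \epsilon$; and for $j \in I_\circ\setminus I_{\scriptscriptstyle\otimes}$ the weight $\mu_l - \alpha_j$ does not appear in $L(\mu_l)$, forcing $B_j f_{l,n} = 0$ and $(K_j K_{\tau j}^{-1})f_{l,n} = f_{l,n}$. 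Assembling these facts produces the desired character $\chi_l : \uqb_n \to \Q(q)$ satisfying the displayed values.

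\textbf{Step 3 (Classification and inequivalence).} Let $\chi : \uqb_n \to \Q(q)$ be an arbitrary integrable character. Scalar extension yields an integrable character $\chi^k$ of $\uqb_n\otimes_{\Q(q)} k$. By \cite[Proposition 4.21]{Mee24} and the discussion in Remark \ref{rem:existence}, $\chi^k$ must coincide with $\chi_l^k$ for a unique $l\in\Z$. Since both take values in $\Q(q)\subset k$, descending scalars gives $\chi = \chi_l$. Pairwise inequivalence is immediate from the distinguishing scalar $\chi_l(B_{i_0}) = [l-n]_{i_0}$ in the reduced case and $\chi_l(K_{i_0}K_{\tau i_0}^{-1}) = q_{i_0}^l$ in the non-reduced case.

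The main obstacle is Step 2: verifying that the rank-one character extends consistently to the full coideal subalgebra. A brute-force check of the coideal relations would be cumbersome, so the cleaner route is via the concrete spherical vector $f_{l,n}$. The orthogonality of $\mu_l$ to the non-distinguished simple roots localizes the non-trivial behavior of $\chi_l$ inside the rank-one subalgebra, matching exactly the data computed in Section \ref{sec:rankone}, while the weight-argument above handles the generators from $I_\bullet$ and $I_\circ\setminus I_{\scriptscriptstyle\otimes}$ uniformly.
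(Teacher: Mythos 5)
Your Step 3 is essentially the paper's argument for the classification (extend scalars to $k$, invoke \cite[Lemma 4.16]{Mee24} and Remark \ref{rem:existence}, descend because the values already lie in $\Q(q)$), and that part is fine. The problem is Step 2, which carries the existence claim and has a genuine gap.

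You want to show that the rank-one spherical vector $f_{l,n}=v_{\mu_l}+\mathrm{l.o.t.}$ spans a line stable under all of $\uqb_n$, and you justify this by the orthogonality $\langle\alpha_j^\vee,\mu_l\rangle=0$ for $j\in I_\bullet\cup(I_\circ\setminus I_{\scriptscriptstyle\otimes})$. That orthogonality only controls the action of $E_j$, $F_j$, $B_j$ on the \emph{leading} term $v_{\mu_l}$; the vector $f_{l,n}$ has lower-order terms in many weight spaces of $L(\mu_l)$, and the generators indexed by $I_\bullet$ and $I_\circ\setminus I_{\scriptscriptstyle\otimes}$ certainly do not act trivially on those weight spaces (already in type $\mathsf{AIV}^m$, the $F_j$ with $j\in I_\bullet$ move the lower-order terms of $f_{1,n}\in L(\omega_1)$ around nontrivially). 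So ``$\mu_l-\alpha_j$ does not occur'' does not give $B_jf_{l,n}=0$, and ``the simple generators act trivially on every weight space supporting $f_{l,n}$'' is false. There is also a prior issue: the vector of Section \ref{sec:rankone} lives in the rank-one module $L(\mu_l)$ for $\uq_{i_0}$, which sits inside the full $L(\mu_l)$ only as the $\uq_{i_0}$-submodule generated by $v_{\mu_l}$, and nothing in your argument shows that subspace, let alone the specific line, is preserved by the remaining generators of $\uqb_n$. In the paper the existence of a $\chi_l$-spherical vector in the full $L(\mu_l)$ is exactly the content of Theorem \ref{thm:branching}, proved later with Watanabe's integrable-$\uqb$-module machinery, and that theorem itself presupposes Proposition \ref{prop:fieldextend}; so your route is close to circular within the paper's logical order.

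The fix is much cheaper than what you attempt: the proposition only asserts the existence of an algebra homomorphism $\uqb_n\to\Q(q)$ with the prescribed values, not (yet) its realization in a module. The consistency of those values with the relations of $\uqb_n$ is already known over $k=\overline{\C(q)}$ by \cite[Lemma 4.16]{Mee24} and \cite[Theorem 7.2]{Schlichtkrull_1984}; since all generators of $\uqb_n$ are sent to elements of $\Q(q)$ and $\uqb_n$ is generated over $\Q(q)$ by them, the restriction of $\chi_l^k$ to $\uqb_n$ is automatically $\Q(q)$-valued. This descent argument is the paper's entire existence proof, and it lets you drop Step 2 completely; integrability and the spherical-vector realization are then handled separately in Lemma \ref{lem:integrable} and Theorem \ref{thm:branching}.
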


\begin{proof}
	The existence of the characters $\chi_l^k:\uqb_n\otimes_{\Q(q)} k\to k$ with the required defining relations follow by \cite[Lemma 4.16]{Mee24} and \cite[Theorem 7.2]{Schlichtkrull_1984}. The character $\chi_l^k$ maps all the generators of $\uqb_n$ to scalars in $\Q(q)$, thus it follows that the character $\chi_l^k$ can be restricted to a character $\chi_l:\uqb_n\to \Q(q)$. This yields the desired character. Moreover, by \cite[Lemma 4.16]{Mee24} and Remark \ref{rem:existence} it follows that all integrable characters are of the described form. 
\end{proof}

We shall hereafter suppress $n$ from the notation, and write simply $\uqb$ for the coideal subalgebra $\uqb_n$.

The next goal is to show that the characters of Proposition \ref{prop:fieldextend} are integrable in the sense of Definition \ref{def:integrable}. We tackle this using Watanabe's framework of \textit{integrable $\uqb$-modules} \cite{Wat2025}.

\begin{notation}
As in the proof of Proposition \ref{prop:fieldextend}, write $\chi_l^k: \uqb\otimes_{\Q(q)}k\to k$ for the character having the same defining relations as $\chi_l$. 
\end{notation}

For all $\lambda,\mu\in X^+$ let $V^\imath(\lambda,\mu)\subset L(\lambda)\otimes L(\mu)$ denote the $\uq$-module generated by $v_{\lambda,\mu}=v_{w_\bullet \lambda}\otimes v_\mu$, here $v_{w_\bullet \lambda}\in B(\lambda)_{w_\bullet \lambda}$. Denote $V^\imath(\lambda)$ the module $V^\imath(0,\lambda)$ that as a $\uq$-module is isomorphic to $L(\lambda)$. 

\begin{definition}\cite[Definition 3.3.4]{Wat2025}.
	A weight $\uqb$-module $V$ is said to be \emph{integrable}, if for each weight vector $v\in V$ there exist weights $\lambda,\mu \in X^+$  and a $\uqb$-module homomorphism $V^\imath(\lambda,\mu) \to V$ which sends
	$v_{\lambda,\mu}$ to $v$.
\end{definition}

Note that by definition integrable $\uqb$-modules are local quotients of integrable $\uq$-modules. 

\begin{notation}\label{notaion:bottom}
Let $l\in \Z$ and let $i\in I_{\scriptscriptstyle\otimes }$ with $i\succeq \tau i$. Set $\mu_l\in X^+$ equal to
\begin{equation}\label{eq:mul2}
	\mu_l=|l|\omega_i\qquad \text{if }\Sigma\text{ is reduced},\qquad
	\mu_l=\begin{cases}
		l\omega_i&\text{if }l\geq0\\
		-l\omega_{\tau i}&\text{if }l<0
	\end{cases}
	\qquad \text{if }\Sigma\text{ is not reduced}.
\end{equation}
\end{notation}

\begin{remark}
	Note that the Notation \ref{notaion:bottom} generalizes the definition of $\mu_l$, as given in \eqref{eq:mul}
\end{remark}

\begin{lemma}\label{lem:integrable}
	The characters $\chi_l$ from Proposition \ref{prop:fieldextend} define integrable $\uqb$-modules.
\end{lemma}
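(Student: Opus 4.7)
The plan is to construct, for each $l\in\Z$, an explicit $\uqb$-module homomorphism $\phi\colon V^\imath(0,\mu_l)=L(\mu_l)\to V_{\chi_l}$ that sends the generator $v_{0,\mu_l}=v_{\mu_l}$ to $1\in V_{\chi_l}$. Since $V_{\chi_l}$ is one-dimensional, any other weight vector is a $\Q(q)$-scalar multiple of $1$ (or zero), and rescaling $\phi$ accordingly (respectively taking the zero map) will then cover every weight vector, establishing integrability.

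First, I will establish the existence of a $\chi_l$-spherical vector $f_l\in L(\mu_l)$ over $\Q(q)$. By \cite[Proposition 4.16]{Mee24}, the character $\chi_l^k$ appears as a submodule of $L(\mu_l)\otimes_{\Q(q)}k$. The $\chi_l$-eigenspace of $L(\mu_l)$ is the common kernel of the $\Q(q)$-linear operators $b-\chi_l(b)\id$ for $b$ ranging over a generating set of $\uqb$; hence it is a $\Q(q)$-subspace whose dimension is preserved under base change to $k$, and so is $1$-dimensional. Let $f_l$ generate it; by Lemma~\ref{lem:sometechnical}(i) it can be rescaled so that $f_l=v_{\mu_l}+\mathrm{l.o.t.}$ Next, I let $(\cdot,\cdot)$ denote the $\varrho$-contravariant non-degenerate symmetric bilinear form on $L(\mu_l)$ normalized by $(v_{\mu_l},v_{\mu_l})=1$, and define
\[
\phi\colon L(\mu_l)\to V_{\chi_l}=\Q(q)\cdot 1,\qquad \phi(v)=(v,f_l)\cdot 1.
\]
Orthogonality of distinct $\uq$-weight spaces under $(\cdot,\cdot)$ then yields $\phi(v_{\mu_l})=(v_{\mu_l},v_{\mu_l})=1$.

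The core step is to verify that $\phi$ is a $\uqb$-module homomorphism. Using $\varrho$-contravariance of the form, $\varrho(\uqb)=\uqb$ from \cite[Proposition 3.2.3]{Wat24}, and the $\chi_l$-sphericalness of $f_l$, the computation
\[
\phi(bv)=(bv,f_l)=(v,\varrho(b)f_l)=\chi_l(\varrho(b))\,\phi(v),\qquad b\in\uqb,
\]
reduces the problem to checking the identity $\chi_l\circ\varrho=\chi_l$ on a generating set of $\uqb$. The cases $b=K_h$ with $h\in Y^\imath$ and $b\in\{F_j,E_j\}$ with $j\in I_\bullet$ are immediate: the first because $\varrho$ fixes $K_h$, the second because $\varrho(F_j),\varrho(E_j)\in\uq_\bullet$ and $\chi_l|_{\uq_\bullet}=\epsilon$. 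The main obstacle will be the case $b=B_i$ with $i\in I_\circ$: it requires an explicit computation of $\varrho(B_i)$ via \eqref{eq:varrho} and the standard interaction of $\varrho$ with Lusztig's braid operator $T_{w_\bullet}$, combined with the specific values of the parameters $(c_i,s_i)$ in Remark~\ref{rem:par}. A cleaner alternative that bypasses this case-by-case work is a Galois-descent argument: integrability of $V_{\chi_l^k}$ over $\uqb\otimes_{\Q(q)}k$ is already encoded in the explicit submodule construction of \cite{Mee24}, and since $V_{\chi_l}$ and each $V^\imath(\lambda,\mu)$ are defined over $\Q(q)$, the $\Q(q)$-rationality of the evaluation map $\Hom_{\uqb}(V^\imath(\lambda,\mu),V_{\chi_l})\to V_{\chi_l}$, $\phi\mapsto\phi(v_{\lambda,\mu})$, transfers the needed existence statement from $k$ back down to $\Q(q)$.
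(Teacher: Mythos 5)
Your overall strategy---exhibit, for each weight vector of $V_{\chi_l}$, an explicit $\uqb$-morphism $V^\imath(0,\mu_l)=L(\mu_l)\to V_{\chi_l}$ hitting it---is a legitimate way to verify Watanabe's definition, and it is genuinely different from the paper's proof. The paper never constructs such a morphism at this stage; instead it checks the abstract criterion of \cite[Proposition 4.3.1]{Wat2025} directly from the character values in Proposition \ref{prop:fieldextend}: in the non-reduced case $E_j,F_j,B_i$ all annihilate $V_{\chi_l}$, and in the reduced case one verifies $B^{(l+1)}_{i,\overline{\mu_l}}v=0$ via the rank-one result \cite[Proposition 4.1.1]{Wat2025}. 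That route needs neither the spherical vector in $L(\mu_l)$ over $\Q(q)$ nor the contravariant form.

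The problem with your write-up is that the one step carrying all the content is left open. Your map $\phi(v)=(v,f_l)$ is a $\uqb$-morphism precisely when $\chi_l\circ\varrho=\chi_l$, and you explicitly defer the case $b=B_i$ ("the main obstacle will be\dots"), so as written the proof is incomplete. The Galois-descent fallback does not close it: the assertion that integrability of $V_{\chi_l^k}$ "is already encoded in the explicit submodule construction of \cite{Mee24}" is not true as stated---\cite{Mee24} produces an \emph{embedding} $V_{\chi_l^k}\hookrightarrow L(\mu_l)\otimes k$, whereas integrability requires a morphism \emph{out of} $V^\imath(\lambda,\mu)$ sending the cyclic generator $v_{\lambda,\mu}$ to a prescribed nonzero vector; producing that surjection is exactly the nontrivial point being begged. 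Both gaps are fixable without the $\varrho(B_i)$ computation: by Proposition \ref{prop:complement} write $L(\mu_l)=\Q(q)f_l\oplus W$ with $W$ a $\uqb$-submodule; the projection onto $\Q(q)f_l\cong V_{\chi_l}$ is then automatically a $\uqb$-morphism, and it is nonzero on $v_{\mu_l}$ because $v_{\mu_l}$ generates $L(\mu_l)$ as a $\uqb$-module (cf.\ $_\mathcal AL(\lambda)={_\mathcal A\dot{\uqb}}v_\lambda$ used in Proposition \ref{prop:aform}), so $v_{\mu_l}\notin W$. (Alternatively, $\chi_l\circ\varrho$ is again a character of $\uqb$ and agrees with $\chi_l$ on the $\varrho$-fixed elements $K_iK_{\tau i}^{-1}$, resp.\ $B_i$ for $i\in I_{\mathrm{ns}}$, which determine the character by the classification, so $\chi_l\circ\varrho=\chi_l$.) You should also record explicitly that $V_{\chi_l}$ is a weight $\uqb$-module of weight $\overline{\mu_l}$, since the definition of integrability only applies to weight modules.
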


\begin{proof}
From Proposition \ref{prop:fieldextend} it follows that $V_{\chi_l}$ is a weight $\uqb$-module of weight $\overline{\mu_l}$. In the case that the restricted root system $\Sigma$ is not reduced it follows from Proposition \ref{prop:fieldextend} that 
	\begin{equation}\label{eq:intreduced}
	E_jv=F_jv=B_iv=0,\qquad \text{for all}\qquad i\in I_\circ,j\in I_\bullet.
	\end{equation}
Thus it follows from \cite[Proposition 4.3.1]{Wat2025} that $V_{\chi_l}$ is integrable. Consider the case that $\Sigma$ is reduced. Then it follows from Proposition \ref{prop:fieldextend} that
	\begin{equation}\label{eq:intnotred}
	E_jv=F_jv=0,\qquad \text{for all}\qquad j\in I_\bullet.
	\end{equation}
	Let $i\in I_{\scriptscriptstyle\otimes}$ and let $v\in V_{\chi_l}$. Consider the rank one QSP coideal subalgebra $\uqb_i$ from Definition \ref{def:rank}. The character $\chi_l$ restricted the coideal subalgebra $\uqb_i$ is the character described in \eqref{eq:labelchar}. Thus by \cite[Proposition 4.1.1]{Wat2025}, we have
	\begin{equation}\label{eq:integrable}
		B_{i,\overline{\mu_l}}^{(l+1)}v=0,\end{equation}
	where $B_{i,\overline{\mu_l}}^{(l+1)}v$ denotes the $\imath$divided power from Definition \ref{def:idiv}. Therefore, if the restricted root system $\Sigma$ is reduced, \cite[Proposition 4.3.1]{Wat2025} can also be applied to conclude that the $\uqb$-module $V_{\chi_l}$ is integrable.
\end{proof}

\begin{corollary}
The characters $\chi_l$ are integrable in the sense of Definition \ref{def:integrable}.
\end{corollary}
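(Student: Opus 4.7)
The plan is to deduce the corollary directly from Lemma \ref{lem:integrable} by combining Watanabe's notion of integrability with the complete reducibility result of Proposition \ref{prop:complement}.

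First I would apply Lemma \ref{lem:integrable} to conclude that $V_{\chi_l}$ is integrable as a $\uqb$-module in Watanabe's sense \cite[Definition 3.3.4]{Wat2025}. Unpacking the definition for the single (weight) vector $1 \in V_{\chi_l}$ yields weights $\lambda,\mu\in X^+$ and a surjective $\uqb$-module homomorphism
\[
\pi \colon V^\imath(\lambda,\mu) \twoheadrightarrow V_{\chi_l}, \qquad v_{\lambda,\mu} \mapsto 1.
\]
By construction $V^\imath(\lambda,\mu)$ is a $\uq$-submodule of $L(\lambda) \otimes L(\mu)$, hence a finite-dimensional integrable $\uq$-module.

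Next I would show that $V^\imath(\lambda,\mu)$ is completely reducible as a $\uqb$-module. As a $\uq$-module it decomposes into a finite direct sum $\bigoplus_\nu n_\nu L(\nu)$ of simple highest weight modules, and by Proposition \ref{prop:complement} each summand $L(\nu)$ is completely reducible when restricted to $\uqb$. Complete reducibility is preserved under finite direct sums, so $V^\imath(\lambda,\mu)|_{\uqb}$ is completely reducible.

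Consequently the surjection $\pi$ splits as a morphism of $\uqb$-modules, and $V_{\chi_l}$ occurs as a direct summand (in particular, a $\uqb$-submodule) of the integrable $\uq$-module $V^\imath(\lambda,\mu)$. This is precisely the condition of Definition \ref{def:integrable}, so the characters $\chi_l$ are integrable. No step is expected to pose a real obstacle; the argument is essentially an unpacking of definitions bridging the two notions of integrability via complete reducibility.
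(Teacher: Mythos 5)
Your argument is correct and matches the (implicit) reasoning the paper intends: the corollary is stated without proof, and the step from Lemma \ref{lem:integrable} (which exhibits $V_{\chi_l}$ as a quotient of an integrable $\uq$-module $V^\imath(\lambda,\mu)$) to Definition \ref{def:integrable} (which requires $\chi_l$ to occur as a submodule) is exactly the splitting via complete reducibility from Proposition \ref{prop:complement} that you spell out. Nothing further is needed.
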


\begin{theorem}\label{thm:branching}
	The branching rules of $\chi_l$ coincide with those of $\chi_l^k$, that is to say
	\[[L(\lambda)|_\uqb: \chi_l]=[L(\lambda)\otimes _{\Q(q)}k|_{\uqb\otimes _{\Q(q)}k}: \chi^k_l]=\begin{cases}
	1&\text{if\,\,}\lambda=\mu_l+\mu\text{ with }\mu\in \breve{X}^+\\
	0&\text{else}
	\end{cases}.\]
Moreover, all $\uqb$-homomorphisms $\pi:L(\mu_l+\lambda) \to V_{\chi_l}$ are a scalar multiple of the $\uqb$-homomorphism
$$L(\mu_l+\lambda) \to V_{\chi_l},\qquad\text{with} \qquad  v_{\lambda+\mu_l}\mapsto 1.$$
\end{theorem}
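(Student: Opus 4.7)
The plan is to split the theorem into two pieces. The equality of multiplicities over $\Q(q)$ and over $k$ will follow by flat base change; its common value is the one computed in \cite{Mee24}. The moreover-statement then follows from complete reducibility together with a short $\varrho$-contravariant-form argument showing that the highest weight vector of $L(\mu_l+\lambda)$ survives the projection onto the $\chi_l$-isotypic component.

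First, since $L(\lambda)$ is finite-dimensional over $\Q(q)$, standard flat base change yields
\[
\Hom_\uqb\bigl(L(\lambda),V_{\chi_l}\bigr)\otimes_{\Q(q)}k\;\cong\;\Hom_{\uqb\otimes k}\bigl(L(\lambda)\otimes_{\Q(q)}k,\,V_{\chi_l^k}\bigr),
\]
so these $\Hom$ spaces have the same dimension. Complete reducibility (Proposition~\ref{prop:complement}, which holds over both $\Q(q)$ and $k$) identifies each of these dimensions with the respective branching multiplicity, and the value over $k$ is precisely the content of \cite{Mee24}.

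For the moreover, fix $\lambda\in\breve{X}^+$, so that the multiplicity equals $1$ and hence $\Hom_\uqb(L(\mu_l+\lambda),V_{\chi_l})$ is one-dimensional. By Lemma~\ref{lem:sometechnical}(i) the $\chi_l$-isotypic component of $L(\mu_l+\lambda)$ is the line spanned by some vector $v=v_{\mu_l+\lambda}+\mathrm{l.o.t.}$ Since $\varrho(\uqb)=\uqb$, the orthogonal complement $W$ of this line with respect to the $\varrho$-contravariant form $(\,,\,)$ of Section~\ref{sec:base} is a $\uqb$-submodule, and I define $\pi$ by $\ker\pi=W$ together with $\pi(v)=1$. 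Weight-orthogonality of $(\,,\,)$ then gives $(v_{\mu_l+\lambda},v)=(v_{\mu_l+\lambda},v_{\mu_l+\lambda})=1$, so $v_{\mu_l+\lambda}\notin W$ and $\pi(v_{\mu_l+\lambda})\ne 0$; rescaling achieves the normalization $v_{\mu_l+\lambda}\mapsto 1$. The scalar-multiple statement is then immediate from one-dimensionality.

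The only delicate point is the last one: ensuring that the highest weight vector has non-zero image under the projection. Everything else is either formal base change or a previously established classification, so this single Shapovalov-type computation is the main content of the argument.
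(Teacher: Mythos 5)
Your base-change argument for the multiplicity statement is correct and is a genuinely different, more elementary route than the paper's. The $\chi_l$-eigenspace (equivalently $\Hom_{\uqb}(L(\lambda),V_{\chi_l})$) is the solution space of a finite homogeneous linear system whose coefficients lie in $\Q(q)$, because $\chi_l$ takes values in $\Q(q)$ and $\uqb$ is finitely generated; the dimension of such a solution space is insensitive to field extension, so the multiplicities over $\Q(q)$ and $k$ agree and the common value is the one computed in \cite{Mee24}. The paper instead proves $[L(\lambda)|_\uqb:\chi_l]\leq 1$ and then \emph{constructs} a spherical vector over $\Q(q)$ by showing that $V_{\chi_l}$ is an integrable $\uqb$-module in Watanabe's sense (Lemma \ref{lem:integrable}) and invoking the presentation of $V^\imath(\lambda,\mu)$ from \cite{Wat2025}. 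The heavier route is not wasted effort, though: it produces the homomorphism already normalized by $v_{\mu_l+\lambda}\mapsto 1$, which is exactly the ``moreover'' clause, and Lemma \ref{lem:integrable} is needed elsewhere.

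That normalization is where your argument has a genuine gap. You define $\pi$ by $\ker\pi=W$ \emph{and} $\pi(v)=1$, which presupposes $v\notin W$, i.e.\ $(v,v)\neq 0$; your computation $(v_{\mu_l+\lambda},v)=1$ only shows $v_{\mu_l+\lambda}\notin W$. Equivalently, the functional $w\mapsto (w,v)$ satisfies $(bw,v)=(w,\varrho(b)v)=\chi_l(\varrho(b))(w,v)$, so it is a $\uqb$-homomorphism onto $V_{\chi_l\circ\varrho}$, not a priori onto $V_{\chi_l}$: one must check $\chi_l\circ\varrho=\chi_l$, which is equivalent to $(v,v)\neq 0$ and is not addressed. (If $\chi_l\circ\varrho\neq\chi_l$ occurred in $L(\mu_l+\lambda)$, the line $\Q(q)v$ could be isotropic and your $\pi$ would not exist.) The paper's rank-one argument (Theorem \ref{thm:rankone}) faces the same issue and resolves it by showing $v\equiv_\infty v_{\mu_l}$ from the explicit computations, whence $(v,v)\in 1+q^{-1}\mathbf{A}_\infty$ is a unit; in general rank the paper bypasses the form entirely via \cite[Theorem 4.2.6]{Wat2025}. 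You can repair your proof either by verifying $\chi_l(\varrho(B_i))=\chi_l(B_i)$ on generators, or by first invoking Lemma \ref{lem:qinfty} (whose proof only needs the existence of $v$, which your first part supplies) to conclude $(v,v)\in 1+q^{-1}\mathbf{A}_\infty\neq 0$.
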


\begin{proof}
By Remark \ref{rem:existence} it follows that
\[[L(\lambda)|_\uqb: \chi_l]\geq [L(\lambda)\otimes _{\Q(q)}k|_{\uqb\otimes _{\Q(q)}k}: \chi^k_l],\qquad \text{where}\qquad\lambda\in X^+.\]
An argument analogous to \cite[Lemma 4.2]{Mee24} shows that $ [L(\lambda)|_\uqb: \chi_l]\leq1.$
Thus to conclude that the branching rules coincide, it is sufficient to show that $[L(\lambda)\otimes _{\Q(q)}k|_{\uqb\otimes _{\Q(q)}k}: \chi^k_l]=1$ implies that $[L(\lambda)|_\uqb: \chi_l]=1$. 

First we consider the case that $\Sigma$ is reduced. Recall from \cite[Corollary 4.20]{Mee24} that 
\[[L(\lambda)\otimes _{\Q(q)}k|_{\uqb\otimes _{\Q(q)}k}: \chi^k_l]=1\text{ if and only if }\lambda =\mu_l+\nu\text{ with }\overline{\nu}=0.\]
Here, we recall that $\overline{\nu}=0$ if and only if $\nu\in \breve{X}^+$.
According to Notation \ref{notaion:bottom}, we have 
\[\langle \alpha_i^\vee, \mu_l+\nu\rangle \geq\langle \alpha_i^\vee, \mu_l\rangle =\langle \alpha_i^\vee, |l| \omega_i\rangle= |l|.\]
Combining this with \eqref{eq:intreduced}, we apply \cite[Proposition 4.1.1 \& Theorem 4.2.6]{Wat2025} to obtain the existence of a $\uqb$-module homomorphism $L(\lambda)\to V_{\chi_l}$ with $v_\lambda\mapsto 1$.

Lastly, consider the case that $\Sigma$ is not reduced. Let $v\in L(\lambda)\otimes_{\Q(q)}k$ span a one-dimensional module of type $\chi_l^k$. By \cite[Proposition 4.1]{Mee24}, we have $v=v_{\lambda}+l.o.t.$ and therefore $K_h v=q^{\langle h,\lambda\rangle} v$ for all $h\in Y^\imath$. In particular the $\imath$weight of $\overline{\lambda}$ coincides with that of $V_{\chi_l}$.  Thus, using Proposition \ref{prop:fieldextend} we apply \cite[Proposition 4.1.1 \& Theorem 4.2.6]{Wat2025} to obtain the existence a $\uqb$-module homomorphism $V(\lambda)\to V_{\chi_l}$ with $v_\lambda \mapsto 1$.
\end{proof}

\begin{lemma}
	The $\uqb$-module $V_{\chi_l}$ carries a up to scalar multiple unique structure of based $\uqb$-module.
\end{lemma}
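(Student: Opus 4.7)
The plan is to extend Lemma \ref{lem:basedrankone} to general rank by essentially the same argument. I would fix a nonzero vector $v \in V_{\chi_l}$ and take $B = \{v\}$; this choice determines $\mathcal L = \mathbf{A}_\infty v$, $\mathcal B = \{v + q^{-1}\mathcal L\}$, and the semilinear involution $\psi^\imath$ on $V_{\chi_l}$ that fixes $v$. Condition (i) of Definition \ref{def:Bbased} holds because Theorem \ref{thm:branching} together with Lemma \ref{lem:sometechnical} produces a $\chi_l$-spherical vector of $\imath$weight $\overline{\mu_l}$, so $V_{\chi_l}$ is concentrated in that single $\imath$weight space. Condition (iv) is immediate from the definition of $\mathcal L$ and $\mathcal B$. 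For condition (iii) one verifies the identity $\chi_l \circ \psi^\imath = \overline{\,\cdot\,} \circ \chi_l$ directly on the generators of $\uqb$ using Proposition \ref{prop:fieldextend}: the values $[l-n]_i$, $0$, and $q_i^{\pm l}$ are preserved or inverted by the bar involution exactly as required, and the compatibility on $\uq_\bullet$ is the tautology $\epsilon \circ \overline{\,\cdot\,} = \overline{\,\cdot\,} \circ \epsilon$.

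The substantive step is condition (ii), namely $_\mathcal A \dot{\uqb} \cdot v \subset \mathcal A v$. By \eqref{eq:genaform} it suffices to check this on the generators $B_{i,\zeta}^{(a)}$ and $E_j \boldsymbol{1}_\zeta$ for $i \in I_\circ$, $j \in I_\bullet$, $\zeta \in X_\imath$, $a \geq 0$. The idempotent $\boldsymbol{1}_\zeta$ annihilates $v$ unless $\zeta = \overline{\mu_l}$, reducing the problem to the generators $B_{i,\overline{\mu_l}}^{(a)}$ and $E_j \boldsymbol{1}_{\overline{\mu_l}}$. The relation $\chi_l|_{\uq_\bullet} = \epsilon$ from Proposition \ref{prop:fieldextend} gives $E_j v = 0$. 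The inductive formulas of Definition \ref{def:idiv} express $B_{i,\overline{\mu_l}}^{(a)}$ applied to $v$ as a polynomial in $B_i$ with $\mathcal A$-coefficients (the parameters $c_i, s_i$ lie in $\mathcal A$ by Remark \ref{rem:par}); combined with the integrality $\chi_l(B_i) \in \mathcal A$ guaranteed by Proposition \ref{prop:fieldextend} (and by the vanishing $\chi_l(B_i) = 0$ in the non-reduced case already observed in the proof of Lemma \ref{lem:integrable}), one obtains $B_{i,\overline{\mu_l}}^{(a)} v \in \mathcal A v$.

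Uniqueness up to scalar multiple is immediate since $\dim_{\Q(q)} V_{\chi_l} = 1$: any basis of $V_{\chi_l}$ is a nonzero scalar multiple of $v$, and any such multiple yields a valid based structure by the same argument applied to the rescaled vector. The main, and really only, non-routine ingredient is the integrality statement $\chi_l(B_i) \in \mathcal A$ for all $i \in I_\circ$, which is precisely what Proposition \ref{prop:fieldextend} secures across both the reduced and non-reduced cases; I do not anticipate any further obstacle beyond this check.
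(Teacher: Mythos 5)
Your proposal is correct and follows essentially the same route as the paper: reduce to checking condition (ii) of Definition \ref{def:Bbased} on the generators \eqref{eq:genaform}, kill the $E_j\boldsymbol{1}_\zeta$ via $\chi_l|_{\uq_\bullet}=\epsilon$, and handle the $\imath$divided powers using $\chi_l(B_i)\in\mathcal A$ (the paper phrases this last step as a reduction to the single index $i\in I_{\mathrm{ns}}\cap I_{\scriptscriptstyle\otimes}$ and then invokes the rank-one Lemma \ref{lem:basedrankone}, whose argument is exactly the one you reproduce inline). Your explicit verification of conditions (i), (iii), (iv) only spells out what the paper dismisses as determined by the choice of $v$.
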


\begin{proof}
By Lemma \ref{lem:integrable} $V_{\chi_l}= \mathrm{span}_{\Q(q)}\{1\}$ is a weight $\uqb$-module. As in Lemma \ref{lem:basedrankone}, we remain to verify the condition 
\begin{equation}\label{eq:preserve}
	_\mathcal A \dot{\uqb} 1\subset \mathcal A1.\end{equation}
It suffices to check that the generators $B_{i,\zeta}^{(a)}$ and $E_j \boldsymbol{1}_{\zeta}$ for $\zeta\in X_\imath$, $i\in I_\circ $, $j\in I_\bullet$ and $a\geq0$, preserve \eqref{eq:preserve}. By Proposition \ref{prop:fieldextend} $\chi_l|_{\uq_\bullet}=\epsilon$, thus the generators $E_j \boldsymbol{1}_{\zeta}$ for $j\in I_\bullet$ annihilate $V$. Now, it is sufficient to check \eqref{eq:preserve} for $i\in I_\circ $ with $\chi_l(B_i)\neq 0$. There is at most one $i\in I_\circ$ with $\chi(B_i)\neq 0$, namely $i\in I_{\mathrm{ns}}\cap I_{\scriptscriptstyle\otimes }$. We therefore reduce to the rank one case, which is shown in the proof of Lemma \ref{lem:basedrankone}.
\end{proof}

\begin{proposition}\label{prop:aform}
	Let $\pi:L(\mu_l+\lambda) \to V_{\chi_l}$ be the $\uqb$-homomorphism as described in Theorem \ref{thm:branching}. Then $\pi({_\mathcal A}L(\lambda))=\mathcal A 1.$
\end{proposition}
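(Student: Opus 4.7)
The plan is to establish the two inclusions separately. Surjectivity $\mathcal A \cdot 1 \subseteq \pi({_\mathcal A}L(\mu_l+\lambda))$ is immediate since $v_{\mu_l+\lambda}\in B(\mu_l+\lambda)\subset {_\mathcal A}L(\mu_l+\lambda)$ and $\pi(v_{\mu_l+\lambda}) = 1$. For the opposite inclusion, I will dualize: under the bilinear form of Section \ref{sec:base}, $\pi$ corresponds to a spherical vector $\phi_{l,\lambda} \in L(\mu_l+\lambda)$ satisfying $\pi(v) = (\phi_{l,\lambda}, v)$ and $(\phi_{l,\lambda}, v_{\mu_l+\lambda}) = 1$; the inclusion $\pi({_\mathcal A}L(\mu_l+\lambda)) \subseteq \mathcal A$ then becomes equivalent to $\phi_{l,\lambda} \in {_\mathcal A}L^{\mathrm{up}}(\mu_l+\lambda)$. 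This is precisely the reformulation advertised in the introduction.

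I will show $\phi_{l,\lambda} \in {_\mathcal A}L^{\mathrm{up}}(\mu_l+\lambda)$ by reduction to minimal cases combined via tensor products. For the base cases $l=\pm 1$, $\lambda=0$: the weight $\mu_{\pm 1}$ is a fundamental weight $\omega_i$ for $i \in I_{\scriptscriptstyle\otimes}$, and by inspection of Table \ref{table:1} the representation $L(\omega_i)$ is (quasi-)minuscule. By \cite[\S 9.21]{Jan96} one then has ${_\mathcal A}L^{\mathrm{up}}(\omega_i) = {_\mathcal A}L(\omega_i)$ on all nonzero weight spaces, and the explicit spherical vectors from Subsections \ref{sec:AI}--\ref{sec:AIII_a} (e.g.\ $f_{1,n} = v_1 + q^{-n}v_2$) are visibly $\mathcal A$-linear combinations of canonical basis elements, so $\phi_{\pm 1, 0} \in {_\mathcal A}L(\omega_i) \subseteq {_\mathcal A}L^{\mathrm{up}}(\omega_i)$. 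For the third base case $l = 0$, $\lambda \in \breve{X}^+$, Watanabe's Theorem \ref{thm:watintro} provides a based morphism $L(\lambda) \to L(0)$ which preserves $\mathcal A$-forms by Lemma \ref{lem:equivbased}; dually, $\phi_{0,\lambda} \in {_\mathcal A}L^{\mathrm{up}}(\lambda)$.

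The general case will follow by iterating Lemma \ref{lem:shift} and the rank-one constructions of Section \ref{sec:rankone} to write
\[
\phi_{l,\lambda} \;=\; \pi_{\mu_l+\lambda}\bigl(\phi_{e,n_1}\otimes\phi_{e,n_2}\otimes\cdots\otimes\phi_{e,n_{|l|}}\otimes\phi_{0,\lambda}\bigr), \qquad e = \mathrm{sgn}(l),
\]
with appropriately chosen shifts $n_k$; comparing the coefficient of $v_{\mu_l+\lambda}$ on both sides pins down the normalization, and uniqueness in Theorem \ref{thm:branching} identifies the right-hand side with $\phi_{l,\lambda}$. Each factor lies in ${_\mathcal A}L^{\mathrm{up}}$ of its respective module by the base cases, and the pairing $({_\mathcal A}L \otimes {_\mathcal A}L, \ {_\mathcal A}L^{\mathrm{up}} \otimes {_\mathcal A}L^{\mathrm{up}}) \subseteq \mathcal A$ shows that the tensor product sits in ${_\mathcal A}L^{\mathrm{up}}$ of the tensor module. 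Finally, because the Cartan projection $\pi_{\mu_l+\lambda}$ is a based $\uq$-module morphism (Example \ref{ex:tensor}), its transpose $\iota: L(\mu_l+\lambda) \hookrightarrow L(\mu_l)\otimes L(\lambda)$ preserves ${_\mathcal A}L$, and contravariance of the bilinear form then yields $\pi_{\mu_l+\lambda}({_\mathcal A}L^{\mathrm{up}}) \subseteq {_\mathcal A}L^{\mathrm{up}}$. Thus $\phi_{l,\lambda} \in {_\mathcal A}L^{\mathrm{up}}(\mu_l+\lambda)$, which is what was required.

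The main obstacle will be the careful bookkeeping of the dual integral form ${_\mathcal A}L^{\mathrm{up}}$ (rather than the more familiar ${_\mathcal A}L$) under tensor products and Cartan projections; since ${_\mathcal A}L \subsetneq {_\mathcal A}L^{\mathrm{up}}$ in general, this preservation does not follow formally from the based-morphism property of the Cartan projection, but rather from the dual statement that its transpose preserves ${_\mathcal A}L$.
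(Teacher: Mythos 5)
Your proposal takes a genuinely different route from the paper, but it has a gap that is fatal in general rank. The problem is the base case $l=\pm1$, $\lambda=0$: the explicit spherical vectors of Subsections \ref{sec:AI}--\ref{sec:AIII_a} exist only for the rank-one types $\mathsf{AI}$ and $\mathsf{AIV}$, whereas Proposition \ref{prop:aform} is stated for a Hermitian pair of arbitrary rank. For, say, type $\mathsf{EIII}$ or $\mathsf{DIII_b}$ the spherical vector $\phi_{\pm1,0}\in L(\mu_{\pm1})$ is never written down, so it is not ``visibly an $\mathcal A$-linear combination of canonical basis elements''. Quasi-minuscularity identifies ${_\mathcal A}L^{\mathrm{up}}(\mu_{\pm1})$ with ${_\mathcal A}L(\mu_{\pm1})$ away from weight zero, but you still have to prove membership in one of the two lattices, and in the paper that membership (Lemma \ref{lem:dualcan} and Theorem \ref{lem:integral1}) is \emph{deduced from} Proposition \ref{prop:aform}. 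As written, your argument is therefore circular (or at best incomplete) outside rank one. Your dualization $\pi({_\mathcal A}L)\subseteq\mathcal A \Leftrightarrow \phi_{l,\lambda}\in{_\mathcal A}L^{\mathrm{up}}$ is correct, and the Cartan-projection bookkeeping can be repaired --- the adjoint of $\pi_{\mu+\nu}$ sends $v_{\mu+\nu}$ to $v_\mu\otimes v_\nu$ and hence preserves $\mathcal A$-forms because ${_\mathcal A}L(\mu+\nu)={_\mathcal A}\uq\, v_{\mu+\nu}$ --- but none of this rescues the base case.

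The paper's own proof is a three-line computation that works uniformly in all ranks and needs none of this machinery: by \cite[Theorem 5.3 (Claim $\star$)]{Bao18} one has ${_\mathcal A}L(\mu_l+\lambda)={_\mathcal A}\dot{\uqb}\,v_{\mu_l+\lambda}$, and since $(V_{\chi_l},\{1\})$ is a based $\uqb$-module one has ${_\mathcal A}\dot{\uqb}\cdot 1\subseteq\mathcal A\cdot 1$ by \eqref{eq:preserve}; as $\pi$ is a $\uqb$-homomorphism, $\pi({_\mathcal A}L(\mu_l+\lambda))={_\mathcal A}\dot{\uqb}\,\pi(v_{\mu_l+\lambda})=\mathcal A\cdot 1$. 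Note that the key fact you would need to justify your Cartan-projection step (integral forms are cyclic over the integral form of the algebra, applied to $\uq$) is exactly the fact that, applied to $\dot{\uqb}$ instead, finishes the whole proof at once; the dual-canonical-basis considerations you invoke belong to the \emph{application} of this proposition in Section \ref{Sec:Int}, not to its proof.
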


\begin{proof}
	By \cite[Theorem 5.3 (Claim $\star$)]{Bao18} we have $_\mathcal AL(\lambda)={_\mathcal A\dot{\uqb}}v_\lambda$, thus by \eqref{eq:genaform} and \eqref{eq:preserve} we have
	\[\pi( _\mathcal A L(\mu_l+\lambda))=\pi( _\mathcal A \dot{\uqb} v_{\mu_l+\lambda} )=\mathcal A \pi(v_{\mu_l+\lambda})=\mathcal A 1.\qedhere\]
\end{proof}

\section{A reduction to rank one}\label{Sec:Randreduc}
In this section we generalize Theorem \ref{thm:rankone} to a Hermitian quantum symmetric pair of general rank. 

\begin{remark}\label{rem:parameters}
Let $i\in I_{\scriptscriptstyle \otimes}$. For the rest of the paper the parameters $(c_i,s_i)$ of $\uqb$ are assumed to satisfy the conditions of Lemma \ref{lem:injective}. 
\end{remark} 

\begin{lemma}\label{lem:qinfty}
Let $f\in L(\lambda)$ be a $\chi$-spherical vector normalized with $f=v_\lambda+\mathrm{l.o.t}$, then $f\equiv_{\infty} v_\lambda$.
\end{lemma}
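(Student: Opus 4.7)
The strategy is to exhibit a specific realization of $f$ that is manifestly congruent to $v_\lambda$ modulo $q^{-1}\mathcal L(\lambda)$, and then invoke the uniqueness of $\chi$-spherical vectors given by Theorem \ref{thm:branching}. By that theorem we may write $\chi = \chi_l$ for some $l \in \Z$, and $\lambda = \mu_l + \nu$ with $\nu \in \breve{X}^+$, so the argument splits cleanly into a minimal part $\mu_l$ carrying the character and a spherical-weight part $\nu$ carrying the trivial character.

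The plan is to construct a candidate $\chi_l$-spherical vector as $g := \pi_\lambda(w_\nu \otimes f^{\min}) \in L(\lambda)$, where $w_\nu \in L(\nu)$ spans the trivial-character $\uqb$-submodule and $f^{\min} \in L(\mu_l)$ spans the $\chi_l$-isotypic subspace of the minimal realization. Both inputs will be chosen so that $w_\nu \equiv_\infty v_\nu$ and $f^{\min} \equiv_\infty v_{\mu_l}$. The existence of $w_\nu$ with this $q=\infty$ behavior is the general-rank counterpart of Lemma \ref{lem:wat}, and is supplied by the based morphism of Watanabe's Theorem \ref{thm:watintro} together with Proposition \ref{prop:complement}. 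The existence of $f^{\min}$ with $f^{\min} \equiv_\infty v_{\mu_l}$ is obtained by lifting the rank-one construction of Section \ref{sec:rankone} to $L(\mu_l)$: since $\mu_l$ is a multiple of a fundamental weight supported on $I_{\scriptscriptstyle\otimes}$, the rank-one analysis inside the $\uqb_i$-submodule generated by $v_{\mu_l}$ produces a vector which, under the parameter hypotheses of Remark \ref{rem:parameters}, satisfies the required $q=\infty$ equivalence by Lemma \ref{lem:injective}.

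With both inputs available, Lemma \ref{lem:shift} ensures that $w_\nu \otimes f^{\min}$ is $\chi_l$-spherical for $\uqb$ and that its Cartan projection $g$ is nonzero. Lemma \ref{lem:preserveaform} applied to $v = w_\nu \in \mathcal L(\nu)$ and $w = f^{\min} \in \mathcal L(\mu_l)$ then yields $g \equiv_\infty v_\lambda$. Since Theorem \ref{thm:branching} asserts that the $\chi_l$-isotypic subspace of $L(\lambda)$ is one-dimensional, $g = c\,f$ for some $c \in \Q(q)$. Comparing the coefficient of $v_\lambda$ on both sides, the congruence $g \equiv_\infty v_\lambda$ forces $c \in 1 + q^{-1}\mathbf A_\infty$, so $c$ is a unit of $\mathbf A_\infty$ with $c(\infty) = 1$. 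It follows that $f = c^{-1} g \equiv_\infty c^{-1} v_\lambda \equiv_\infty v_\lambda$, as required.

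The main obstacle in this plan is the verification that the rank-one candidate for $f^{\min}$ is actually $\chi_l$-spherical for the full general-rank $\uqb$, and not merely for the rank-one subalgebra $\uqb_i$. This requires checking that the remaining generators $B_j$ with $j \in I_\circ \setminus I_{\scriptscriptstyle\otimes}$ together with the parabolic generators $E_j, F_j$ with $j \in I_\bullet$ act on the constructed vector in accordance with $\chi_l$. This step relies essentially on the special form $\mu_l = |l|\omega_i$ (respectively its analog in the non-reduced case), on the vanishing of $\langle \alpha_j^\vee, \mu_l\rangle$ for $j \neq i$, and on the vanishing of $\chi_l$ outside of $I_{\scriptscriptstyle\otimes} \cap I_{\mathrm{ns}}$ established in Proposition \ref{prop:fieldextend}.
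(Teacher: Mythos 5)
Your reduction of the general case $\lambda=\mu_l+\nu$ to the minimal case $L(\mu_l)$ via $g=\pi_\lambda(w_\nu\otimes f^{\min})$, Lemma \ref{lem:shift}, Lemma \ref{lem:preserveaform} and the multiplicity-one statement of Theorem \ref{thm:branching} is a genuinely different route from the paper's proof, which instead normalizes $f$ into $\mathcal L(\lambda)\setminus q^{-1}\mathcal L(\lambda)$, shows $\widetilde{E}_iw_0\equiv_\infty 0$ for every $i\in I$ by restricting to each rank-one subalgebra $(\uq_i,\uqb_i)$ separately, and then uses that only the highest weight element of the crystal is killed by all Kashiwara operators. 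The problem is your base case. The vector produced by the rank-one analysis lives in the $\uq_i$-submodule $\uq_iv_{\mu_l}\subset L(\mu_l)$ and is spherical only for $\uqb_i$; it is in general \emph{not} $\chi_l$-spherical for the full $\uqb$, so the verification you defer to your last paragraph fails rather than succeeds. Concretely, in type $\mathsf{CI}$ for $\mathfrak{sp}_4$ one has $\mu_1=\omega_2$ and the rank-one candidate is $v_{\omega_2}+cF_2v_{\omega_2}$ with $c\neq0$; then $B_1(v_{\omega_2}+cF_2v_{\omega_2})=cF_1F_2v_{\omega_2}\neq0$, whereas Proposition \ref{prop:fieldextend} forces $\chi_1(B_1)=0$. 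The true spherical vector of $L(\mu_l)$ has lower-order terms outside $\uq_iv_{\mu_l}$, so the rank-one computation does not control it.

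If you instead take $f^{\min}=v_{\mu_l}+\lot$ to be the genuine full-rank spherical vector supplied by Theorem \ref{thm:branching}, then the statement $f^{\min}\equiv_\infty v_{\mu_l}$ that your tensor-product step needs is exactly the $\nu=0$ instance of the lemma being proved, so the argument becomes circular: the reduction is valid but the special case it reduces to is left unproved. (A smaller issue: the congruence $w_\nu\equiv_\infty v_\nu$ is not a formal consequence of the based-morphism statement of Theorem \ref{thm:watintro} together with Proposition \ref{prop:complement}; it is an intermediate result in Watanabe's argument, the one the paper invokes as \cite[Corollary 4.2.3]{Wat24}. That attribution is repairable.) To close the real gap you would need an independent argument for $L(\mu_l)$ --- for instance a direct computation exploiting that $\mu_{\pm1}$ is (quasi-)minuscule, followed by another tensor-product step --- or simply the paper's Kashiwara-operator argument, which treats all $\lambda$ at once and makes the reduction unnecessary.
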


\begin{proof}
Let $c\in \Q(q)$ be such that $w_0=cf\in \mathcal  \mathcal L(\lambda) \setminus  q^{-1}\mathcal L(\lambda)$.
Let $i \in I_\bullet$. By \cite[Lemma 4.16]{Mee24} it follows that
\[E_iw_0 = F_iw_0 = 0.\]
Thus
\[\tilde{E}_iw_0 = 0,\]
where $\tilde{E}_i$ denotes the Kashiwara operator.
Let $i \in  I_\circ$ and consider the associated quantum symmetric pair $(\uq_i,\uqb_i)$ of rank one from Definition \ref{def:rank}. Next, consider the $\uq$-module $L(\lambda)$ as a semisimple $\uq_i$-module. Here we can apply the rank one cases of Theorem \ref{thm:rankone} and \cite[Corollary 4.2.3]{Wat24} (dealing with the more general parameters using Lemma \ref{lem:wat}) to conclude that
\[\tilde{E}_iw_0 \equiv_\infty 0.\]
Because $\widetilde{E}_i w_0\equiv_\infty0$ for all $i\in I$, it follows from \cite[Example 2.3.4]{Wat24} that there exists $0\neq a\in \Q$ with $aw_0\equiv_\infty v_\lambda$. As $f=v_\lambda+\mathrm{l.o.t.}$, this shows that $f\equiv_\infty v_\lambda$.
\end{proof}

\begin{theorem}\label{thm:mainthm}
	Let $\lambda\in \breve{X}^+$ and let $l\in \Z$. Then there exists a unique morphism as based $\uqb$-modules
	$$L(\mu_l+\lambda) \to V_{\chi_l},\qquad \qquad  v_{\lambda+\mu_l}\mapsto 1.$$
\end{theorem}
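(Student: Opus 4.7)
The plan is to verify, for the unique $\uqb$-homomorphism $\pi : L(\mu_l+\lambda) \to V_{\chi_l}$ with $\pi(v_{\mu_l+\lambda}) = 1$ provided by Theorem \ref{thm:branching}, the four criteria of Lemma \ref{lem:equivbased}. The argument parallels the rank one proof of Theorem \ref{thm:rankone}, now with Lemma \ref{lem:qinfty} providing the crystal-limit input in arbitrary rank.

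Fix the spherical vector $v \in L(\mu_l+\lambda)$, normalized via Lemma \ref{lem:sometechnical}(i) so that $v = v_{\mu_l+\lambda} + \mathrm{l.o.t.}$. By Lemma \ref{lem:qinfty}, $v \equiv_\infty v_{\mu_l+\lambda}$, and hence $(v,v) \equiv_\infty 1$. By Proposition \ref{prop:complement}, set $W = \ker \pi$ equal to the $\varrho$-orthogonal complement of $v$ in $L(\mu_l+\lambda)$. Writing $v_{\mu_l+\lambda} = cv + w_1$ with $w_1 \in W$ gives $c = (v,v)^{-1} \in 1 + q^{-1}\mathbf{A}_\infty$ and $\pi(v) = c^{-1}$. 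Criterion (ii) is Proposition \ref{prop:aform}, and criterion (iv) is automatic since $\dim V_{\chi_l} = 1$. For criterion (iii), Lemma \ref{lem:sometechnical}(ii) gives $\Upsilon \overline{v} = av$ for some $a \in \Q(q)^\times$; as $\Upsilon$ is lower-triangular with identity on the highest weight, comparison of the coefficients of $v_{\mu_l+\lambda}$ forces $a = 1$, so $\psi^\imath(v) = v$. A short verification using the explicit scalars from Proposition \ref{prop:fieldextend} shows the $\uqb$-isotypic decomposition of $L(\mu_l+\lambda)$ is preserved by $\psi^\imath$, so $W$ is $\psi^\imath$-stable and $\pi \circ \psi^\imath$ and $\psi^\imath \circ \pi$ agree on each summand.

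The main obstacle is criterion (i), the crystal-lattice preservation $\pi(\mathcal L(\mu_l+\lambda)) \subset \mathbf{A}_\infty \cdot 1$. Since $v \equiv_\infty v_{\mu_l+\lambda}$, the class $[v]$ in $\mathcal L/q^{-1}\mathcal L$ equals the crystal basis class $[v_{\mu_l+\lambda}]$. The task thus reduces to establishing the $\mathbf{A}_\infty$-module splitting $\mathcal L(\mu_l+\lambda) = \mathbf{A}_\infty v \oplus (\mathcal L(\mu_l+\lambda) \cap W)$, after which $\pi$ sends $\mathbf{A}_\infty v$ into $\mathbf{A}_\infty c^{-1} \subset \mathbf{A}_\infty$ and kills $W$. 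This crystal-lattice splitting is the delicate point; it rests on the rank one inputs of Subsections \ref{sec:AI}--\ref{sec:AIII_a} that already went into Lemma \ref{lem:qinfty}, together with the parameter choice of Remark \ref{rem:parameters} ensuring the desired $\equiv_\infty$ behavior of rank one spherical vectors.
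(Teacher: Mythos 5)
Your overall strategy coincides with the paper's: take the $\uqb$-homomorphism supplied by Theorem \ref{thm:branching} and check the four criteria of Lemma \ref{lem:equivbased}, with Lemma \ref{lem:qinfty} providing the crystal-limit input, Proposition \ref{prop:aform} handling the $\mathcal A$-forms, and the rank one argument of Theorem \ref{thm:rankone} handling the $\imath$bar involutions. Your treatment of (ii) and (iii) is in line with the paper.

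The gap is in criterion (i), which you yourself flag as the main obstacle and then do not close. You reduce (i) to the splitting $\mathcal L(\mu_l+\lambda)=\mathbf{A}_\infty v\oplus(\mathcal L(\mu_l+\lambda)\cap W)$ and assert that this ``rests on the rank one inputs that already went into Lemma \ref{lem:qinfty}.'' That is not sufficient: knowing $v\equiv_\infty v_{\mu_l+\lambda}$ (hence $v\in\mathcal L$) does not by itself imply that projection along $W$ preserves $\mathcal L$ --- a $\uqb$-complement of an $\mathbf{A}_\infty$-line in a crystal lattice need not meet the lattice in a complementary sublattice. The missing ingredient is the compatibility of the $\varrho$-contravariant form with the lattice: the canonical basis is almost orthonormal, $(b,b')\in\delta_{b,b'}+q^{-1}\mathbf{A}_\infty$, so $(\mathcal L,\mathcal L)\subset\mathbf{A}_\infty$. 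Since $\pi$ is realized as $w\mapsto (w,v)/(v,v)$ (as in the proof of Lemma \ref{lem:dualcan}) and Lemma \ref{lem:qinfty} gives $(v,v)\in 1+q^{-1}\mathbf{A}_\infty$, one obtains $\pi(\mathcal L(\mu_l+\lambda))\subset\mathbf{A}_\infty$ directly; your splitting is then a consequence rather than an input. The same computation is needed to repair your claim that (iv) is automatic because $\dim V_{\chi_l}=1$: injectivity of $\phi$ on $\{b\in\mathcal B:\phi(b)\neq0\}$ is not forced by one-dimensionality of the target (two basis classes could map to distinct nonzero rationals); what one actually shows is $\phi(b)=\delta_{b,v_{\mu_l+\lambda}}$, i.e.\ exactly one basis class survives, and this again rests on $(b,v)\equiv_\infty\delta_{b,v_{\mu_l+\lambda}}$.
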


\begin{proof}
The existence of the $\uqb$-homomorphism $g: L(\mu_l+\lambda)\to V_{\chi_l}$ with $v_{\lambda+\mu_l}\mapsto 1$ is concluded from Theorem \ref{thm:branching}. We apply Lemma \ref{lem:equivbased} to conclude that the morphism is based. We must verify required conditions. Part $(i)$ and $(iv)$ in Lemma \ref{lem:equivbased} follow by Lemma \ref{lem:qinfty}. Part $(ii)$ in Lemma \ref{lem:equivbased} follows from Proposition \ref{prop:aform}.
Part $(iii)$ in Lemma \ref{lem:equivbased}, commutation with the $\imath$bar involutions, is proved in an analogous way as in Theorem \ref{thm:rankone}.
\end{proof}

\section{Compatibility with integral forms}\label{Sec:Int}
In this section we show that spherical vectors are compatible with the integral forms of the canonical basis. As a first step we show compatibility with the integral form of the dual canonical basis, these integral forms were introduced in Definition \ref{def:dualcan} and the end of subsection \ref{sec:qsp}.

\begin{lemma}\label{lem:inclusion}
Let $M$ and $N$ be a based $\uqb$-modules and let $\pi:M\to N$ be a linear map with $\pi(_\mathcal A M)\subset _\mathcal A N$. Then $\pi^\ast({_\mathcal A M^\ast })\subset  {_\mathcal A N^\ast}$.
\end{lemma}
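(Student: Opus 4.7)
The proof is a direct, formal consequence of the fact that integrality of a linear map between based modules is detected by matrix coefficients in the chosen bases; I do not foresee any serious obstacle.

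The essential observation is the following dual characterization of the integral form: for a based $\uqb$-module $(M, B_M)$, the $\mathcal A$-submodule ${_\mathcal A M^\ast}=\mathrm{span}_{\mathcal A} B_M^\ast$ consists precisely of those functionals $f\in M^\ast$ of finite support on the weight-graded basis with $f(b)\in \mathcal A$ for every $b\in B_M$, equivalently $f({_\mathcal A M})\subseteq \mathcal A$. Indeed, writing $f=\sum_{b\in B_M}\lambda_b\, b^\ast$ with $\lambda_b=f(b)$, the condition $\lambda_b\in \mathcal A$ for all $b$ coincides with $f$ taking $\mathcal A$-values on every basis vector, hence on all of ${_\mathcal A M}$ by $\mathcal A$-linearity. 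The identical statement holds with $N$ in place of $M$.

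With this in hand, the lemma reduces to a one-line matrix calculation. The hypothesis $\pi({_\mathcal A M})\subseteq {_\mathcal A N}$ says that the coefficients $P_{c,b}$ determined by $\pi(b)=\sum_{c\in B_N}P_{c,b}\,c$ for $b\in B_M$ all lie in $\mathcal A$. The associated map $\pi^\ast$ on dual spaces is governed by the same matrix entries, so sends each basis element $b^\ast\in B_M^\ast$ to an $\mathcal A$-linear combination of elements of $B_N^\ast$; extending by $\mathcal A$-linearity then yields $\pi^\ast({_\mathcal A M^\ast})\subseteq {_\mathcal A N^\ast}$, as required.

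The only substantive ingredient is the finite-dimensionality of each weight space, which guarantees that the dual basis $B^\ast$ spans the weight-graded dual and makes the characterization of $_\mathcal A M^\ast$ by $\mathcal A$-valued evaluation meaningful; this is automatic in the applications of interest, since $M$ and $N$ will be finite-dimensional highest weight $\uq$-modules or the one-dimensional modules $V_{\chi_l}$. The lemma is then invoked immediately to transfer the integrality of a projection $\pi$ to its dual, in order to place the spherical vectors $f^\lambda_l$ within the integral form $_\mathcal A^{\mathrm{up}}L(\lambda)$ of the dual canonical basis as outlined in the introduction.
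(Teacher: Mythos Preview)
Your core argument is correct and is essentially the paper's: both reduce to the fact that if $\pi$ has $\mathcal A$-valued matrix coefficients in the given bases then so does its transpose, or equivalently that precomposition $g \mapsto g \circ \pi$ sends $\mathcal A$-valued functionals to $\mathcal A$-valued functionals.

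However, you have the direction of $\pi^\ast$ reversed. For $\pi: M \to N$, the dual map $\pi^\ast = (-)\circ \pi$ goes from $N^\ast$ to $M^\ast$; in your matrix notation, $\pi^\ast(c^\ast) = \sum_{b \in B_M} P_{c,b}\, b^\ast$ for $c \in B_N$, not the other way around. The correct conclusion is $\pi^\ast({_\mathcal A N^\ast}) \subset {_\mathcal A M^\ast}$. The lemma as stated in the paper carries the same slip, but the paper's own proof displays the map as $\pi^\ast: \Hom_{\mathcal A}({_\mathcal A N}, \mathcal A) \to \Hom_{\mathcal A}({_\mathcal A M}, \mathcal A)$ with the correct domain and codomain, and the subsequent application uses it in that direction (one applies $\pi^\ast$ to $v^\ast \in {_\mathcal A N^\ast}$ to land in ${_\mathcal A L(\lambda)^\ast}$). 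Once the arrows are corrected your argument coincides with the paper's.
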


\begin{proof}
By Lemma \ref{lem:equivbased}, the map $\pi$ preserves the $\mathcal A$-forms. Thus, the induced map
\[ \,\pi^\ast: \Hom_{\mathcal A}(_\mathcal A N, \mathcal A)\to \Hom(_\mathcal A M, \mathcal A),\qquad f\mapsto  f\circ \pi,\]
gives rise to the inclusion $\pi^\ast({_\mathcal A M^\ast })\subset  {_\mathcal A N^\ast}$.
\end{proof}

\begin{notation}
	Let $\lambda \in \breve{X}^+$ and $l\in \Z$. Denote $f^\lambda_{l,n}= v_{\mu_l+\lambda}+\mathrm{l.o.t}\in L(\mu_l+\lambda)$ the $\chi_l$ spherical vector for $\uqb_n$. Its existence is assured by Theorem \ref{thm:branching} and Lemma \ref{lem:sometechnical}.
\end{notation}

Recall that $_\mathcal A L^{\mathrm{up}}(\lambda)\subset L(\lambda)$ denotes the integral form of the dual canonical basis as introduced in Section \ref{Sec:QG}.

\begin{lemma}\label{lem:dualcan}
Let $\lambda \in X^+$, then $f^\lambda_{l,n}\in {_\mathcal AL^{\mathrm{up}}(\mu_l+\lambda)}$.
\end{lemma}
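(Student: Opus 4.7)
The plan is to follow the strategy outlined in the introduction: combine the based morphism from Theorem \ref{thm:mainthm} with Lemma \ref{lem:inclusion}, and then identify the resulting vector under the form-induced isomorphism $L(\mu_l+\lambda)^\ast \cong L(\mu_l+\lambda)$ with the spherical vector $f^\lambda_{l,n}$.

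First I would invoke Theorem \ref{thm:mainthm} to obtain the based $\uqb$-module morphism $\pi: L(\mu_l+\lambda) \to V_{\chi_l}$ with $v_{\mu_l+\lambda} \mapsto 1$. By Proposition \ref{prop:aform} this $\pi$ restricts to an $\mathcal A$-linear surjection $_\mathcal A L(\mu_l+\lambda) \twoheadrightarrow {_\mathcal A V_{\chi_l}} = \mathcal A \cdot 1$. Applying Lemma \ref{lem:inclusion} yields $\pi^\ast({_\mathcal A V_{\chi_l}^\ast}) \subset {_\mathcal A L^{\mathrm{up}}}(\mu_l+\lambda)$; in particular the vector $u \in L(\mu_l+\lambda)$ corresponding to $\pi^\ast(1^\ast)$ under $v \mapsto (v,\cdot\,)$ lies in ${_\mathcal A L^{\mathrm{up}}}(\mu_l+\lambda)$.

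Next I would pin down $u$. Evaluating the form at the highest weight vector gives $(u, v_{\mu_l+\lambda}) = 1^\ast(\pi(v_{\mu_l+\lambda})) = 1$, which forces the $(\mu_l+\lambda)$-weight component of $u$ to equal $v_{\mu_l+\lambda}$; in particular $u \neq 0$. Using the $\varrho$-contravariance of the bilinear form, the fact that $\varrho|_{\uqb}$ is an involutive anti-automorphism of $\uqb$ \cite[Proposition 3.2.3]{Wat24}, and the intertwining property $\pi(bw) = \chi_l(b)\pi(w)$, a direct calculation yields
\[(\varrho(b)u, w) = (u, bw) = 1^\ast(\pi(bw)) = \chi_l(b)(u, w) \quad \text{for all } b \in \uqb, w \in L(\mu_l+\lambda),\]
so nondegeneracy of the form gives $b\,u = (\chi_l\circ \varrho)(b)\,u$ for every $b \in \uqb$. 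Hence $u$ spans a one-dimensional $\uqb$-submodule of $L(\mu_l+\lambda)$.

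The main obstacle is that a priori this submodule carries the character $\chi_l \circ \varrho$, which is not transparently equal to $\chi_l$. This is the point where Theorem \ref{thm:branching} does the work: the \emph{only} character of $\uqb$ appearing in $L(\mu_l+\lambda)|_\uqb$ is $\chi_l$, and it appears with multiplicity one. Since $u$ is a nonzero spherical vector in $L(\mu_l+\lambda)$, its character must therefore be $\chi_l$, so $u$ is a $\chi_l$-spherical vector. By Lemma \ref{lem:sometechnical}(i) together with the normalization $(u,v_{\mu_l+\lambda}) = 1$ established above, this forces $u = f^\lambda_{l,n}$. Combining this identification with the containment $u \in {_\mathcal A L^{\mathrm{up}}}(\mu_l+\lambda)$ from the first paragraph completes the proof.
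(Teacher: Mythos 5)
Your overall strategy is the same as the paper's: dualize the $\mathcal A$-form-preserving morphism $\pi$ via Lemma \ref{lem:inclusion} and then identify the vector representing $\pi^\ast(1^\ast)$ under the contravariant form with $f^\lambda_{l,n}$. The gap is in the identification step. You assert that ``the only character of $\uqb$ appearing in $L(\mu_l+\lambda)|_\uqb$ is $\chi_l$,'' citing Theorem \ref{thm:branching}, and use this to upgrade the character $\chi_l\circ\varrho$ of $u$ to $\chi_l$. That assertion is false: Theorem \ref{thm:branching} says $\chi_l$ occurs (with multiplicity one) in $L(\lambda')$ exactly when $\lambda'-\mu_l\in\breve{X}^+$, but for a \emph{fixed} $\lambda'=\mu_l+\lambda$ every $\chi_{l'}$ with $\mu_l+\lambda-\mu_{l'}\in\breve{X}^+$ also occurs. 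Already in rank one type $\mathsf{AI}$, the branching rule \eqref{eq:branching} shows $L(n\omega)$ contains the $n+1$ pairwise inequivalent characters $\chi_{l'}$ with $|l'|\le n$, $l'\equiv n \bmod 2$; this happens even for $\lambda=0$ once $|l|\ge 2$. So knowing that $u$ is \emph{some} spherical vector with leading term $v_{\mu_l+\lambda}$ does not pin down its character --- by Lemma \ref{lem:sometechnical}(i) \emph{every} spherical vector in $L(\mu_l+\lambda)$ has leading term $v_{\mu_l+\lambda}$ up to scalar.

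The conclusion $u=f^\lambda_{l,n}$ is nonetheless correct, and there are two clean ways to close the gap. One is to verify directly from Proposition \ref{prop:fieldextend} and \eqref{eq:varrho} that $\chi_l\circ\varrho=\chi_l$ (e.g.\ in type $\mathsf{AI}$ one checks $\varrho(B)=B$), after which your argument goes through. The other, which is essentially what the paper does, avoids characters altogether: since $f^\lambda_{l,n}$ is spherical, $\varrho$-contravariance shows $(f^\lambda_{l,n})^\perp$ is a $\uqb$-submodule, so by multiplicity one it equals $\ker\pi$; hence $\pi$ has the explicit realization $w\mapsto (w,f^\lambda_{l,n})\cdot 1$, and $\pi^\ast(1^\ast)=(\,\cdot\,,f^\lambda_{l,n})$ corresponds to $f^\lambda_{l,n}$ under the identification $L(\mu_l+\lambda)^\ast\cong L(\mu_l+\lambda)$ with no need to discuss which character $u$ carries. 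I recommend replacing your third paragraph with one of these arguments.
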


\begin{proof}
Let $v=f^\lambda_{l,n}$ and $V=\mathrm{span}_{\Q(q)}\{v\}$. Consider the based module $(V,\{v\})$. It follows from Proposition \ref{prop:aform} that the $\uqb_n$-homomorphism with $v_\lambda\mapsto v$ satisfies $\pi( {_\mathcal AL}(\lambda))= \mathcal A \{v\}$. This homomorphism has the explicit realization with respect to the inner product of subsection \ref{sec:base}
\begin{equation}\label{eq:realization}
w\mapsto (w,v)v,\qquad\text{where}\qquad w\in L(\lambda).
\end{equation}
By Lemma \ref{lem:inclusion} this means that $\pi^\ast (v^\ast)\ \in {_\mathcal A L(\lambda)^\ast}$.
When identifying ${_\mathcal A L(\lambda)^\ast}\cong {_\mathcal AL^{\mathrm{up}}(\lambda)}$, the realization \eqref{eq:realization} implies that $\pi^\ast(v^\ast)\mapsto v$. Thus $v\in {_\mathcal AL^{\mathrm{up}}(\lambda)}$.
\end{proof}

\begin{theorem}\label{lem:integral1}
Let $l\in \Z$. Then $f^0_{l,n}\in {_\mathcal AL(\mu_l)}$
\end{theorem}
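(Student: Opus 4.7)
The plan is to upgrade Lemma \ref{lem:dualcan}, which places $f^0_{l,n}$ in the (typically larger) lattice ${_\mathcal{A}L^{\mathrm{up}}(\mu_l)}$, to membership in ${_\mathcal{A}L(\mu_l)}$. As indicated in the introduction, the strategy is to first treat the minimal cases $l = \pm 1$, where $\mu_l$ is the fundamental weight $\omega_i$ (or $\omega_{\tau i}$ in the non-reduced case) at the Hermitian node $i \in I_{\scriptscriptstyle\otimes}$, using that this fundamental weight is minuscule or quasi-minuscule in every Hermitian type (Table \ref{table:1}); then extend to arbitrary $l$ by tensor products.

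For the minimal case $l = \pm 1$, recall from \cite[\S9.21]{Jan96} that for any (quasi-)minuscule dominant weight $\varpi$, the two integral forms ${_\mathcal{A}L(\varpi)}$ and ${_\mathcal{A}L^{\mathrm{up}}(\varpi)}$ coincide on every non-zero weight space. Combined with Lemma \ref{lem:dualcan}, this reduces the problem to showing that $f^0_{\pm 1, n}$ has no component of weight zero (or that any such component already lies in ${_\mathcal{A}L(\omega_i)}$). In the strictly minuscule types this is automatic (the zero weight space is empty), and in the quasi-minuscule cases it is handled by direct inspection of the explicit form of the minimal spherical vector written out in Sections \ref{sec:AI}--\ref{sec:AIII_a}.

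For general $l$, I would reduce to the minimal case by the iterated-tensor-product construction of Sections \ref{sec:AI}--\ref{sec:AIII_a}, generalized to higher rank via the shift-of-basepoint Lemma \ref{lem:shift}. Concretely, for $l > 0$ the iteration yields, up to a nonzero scalar, a $\chi_l$-spherical vector
\[
F := f^0_{1, n} \otimes f^0_{1, n^{(1)}} \otimes \cdots \otimes f^0_{1, n^{(l-1)}} \in L(\omega_i)^{\otimes l},
\]
where the $n^{(k)}$ are the parameter shifts produced by successive applications of $\rho_{\chi_1}$. By the multiplicity-one branching rule (Theorem \ref{thm:branching}) together with the non-vanishing of the Cartan projection in Lemma \ref{lem:shift}, $F$ lives in the Cartan summand $L(\mu_l) \subset L(\omega_i)^{\otimes l}$ and is proportional to $f^0_{l, n}$. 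Since each tensor factor lies in ${_\mathcal{A}L(\omega_i)}$ by the minimal case, one has $F \in {_\mathcal{A}(L(\omega_i)^{\otimes l})}$. Finally, $L(\mu_l)$ is a based submodule of $L(\omega_i)^{\otimes l}$ (Example \ref{ex:tensor}, cf.\ \cite[\S27.3]{Lus10}), so its canonical integral form is cut out by intersection, ${_\mathcal{A}L(\mu_l) = L(\mu_l) \cap {_\mathcal{A}(L(\omega_i)^{\otimes l})}}$, whence $f^0_{l, n} \in {_\mathcal{A}L(\mu_l)}$. The case $l < 0$ proceeds symmetrically using $\omega_{\tau i}$ when the restricted root system is non-reduced.

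The main obstacle will be the case-by-case verification for the quasi-minuscule types, where the zero weight space of $L(\omega_i)$ is nontrivial and the inclusion ${_\mathcal{A}L(\omega_i)} \subsetneq {_\mathcal{A}L^{\mathrm{up}}(\omega_i)}$ can be strict on it; one must argue separately that the minimal spherical vector has no problematic component there. A secondary technical point is the bookkeeping of the parameter shifts $n \mapsto n^{(k)}$ under $\rho_{\chi_1}$ required to identify the iterated tensor product $F$ with $f^0_{l, n}$ rather than with some other $\chi_l$-spherical vector.
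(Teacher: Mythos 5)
Your overall strategy matches the paper's: treat the minimal cases $e=\pm1$ via the (quasi-)minuscule equality ${_\mathcal AL^{\mathrm{up}}(\mu_e)}_\lambda={_\mathcal AL(\mu_e)}_\lambda$ for $\lambda\neq 0$ from \cite[\S 9.21]{Jan96} together with Lemma \ref{lem:dualcan}, then pass to general $l$ by tensoring minimal spherical vectors, using Lemma \ref{lem:shift}, the multiplicity-one branching rules to see that the tensor vector lies entirely in the Cartan component, and \cite[Proposition 27.1.7, Theorem 27.3.2]{Lus10} to conclude compatibility of integral forms. The tensor-product step and the reduction to $e=\pm1$ are essentially identical to the paper's argument.

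However, there is a genuine gap exactly at the point you flag as ``the main obstacle'': the zero weight space in the quasi-minuscule case. You propose to handle it by ``direct inspection of the explicit form of the minimal spherical vector written out in Sections \ref{sec:AI}--\ref{sec:AIII_a}'', but those sections only treat the rank one Hermitian pairs (types $\mathsf{AI}$ and $\mathsf{AIV}$), where $\mu_{\pm1}$ is in fact minuscule and the issue does not arise. In the higher-rank types of Table \ref{table:1} where $\mu_{\pm1}$ is genuinely quasi-minuscule (e.g.\ $\mathsf{BI}$, $\mathsf{DI}$, $\mathsf{EIII}$, $\mathsf{EVII}$), no explicit formula for the minimal spherical vector is available, so the inspection you rely on cannot be carried out. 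The paper closes this gap with a short structural observation that your proposal is missing: by Notation \ref{notaion:bottom} one has $\overline{\mu_e}\neq\overline{0}$ in $X_\imath$, while the zero weight space $L(\mu_e)_0$ is contained in the $\imath$weight space of $\imath$weight $\overline{0}$ by \eqref{eq:iweightspace}; since $f^0_{e,n}$ spans a weight $\uqb$-module of $\imath$weight $\overline{\mu_e}$, its component in $L(\mu_e)_0$ vanishes automatically, and the equality of lattices away from weight zero suffices. A minor further point is your bookkeeping of the parameter shifts $n^{(k)}$: the paper simply takes $n^{(k)}=n+ek$ as in \eqref{eq:a1vector} and \eqref{eq:AIII_aector}, which is forced by the computation of $\rho_{\chi_e}$ on the generators, so this is not an obstacle.
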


\begin{proof}
We first show this for $e=\pm1$. 
Let $v=f^0_{e,n}$. By a case-by-case verification using Table \ref{table:1}, the weights $\mu_e$ from Notation \ref{notaion:bottom} are quasi-minuscule. As $\mu_e$ is quasi-minuscule, an analogous computation to  \cite[Example 9.21]{Jan96} (here the bilinear form is contravariant with a slightly different algebra anti-automorphism, however the evaluations of the forms differ by a unit in $\mathcal A$) shows that
\begin{equation}\label{eq:aform}
	\bigoplus _{\lambda\neq 0}{_\mathcal AL^{\mathrm{up}}(\mu_e)}_\lambda =\bigoplus _{\lambda\neq 0}{_\mathcal AL(\mu_e )}_\lambda.\end{equation}
By Notation \ref{notaion:bottom}, we remark $\overline{\mu_{e}}\neq \overline{0}$. As $v$ is an $\imath$weight vector of weight $\overline{\mu_e}$, Lemma \ref{lem:dualcan} and \eqref{eq:aform} imply that $v\in{_\mathcal AL(\lambda)}$. 
Let $l> 0$. Consider the vector
$f_{e,n}^0\otimes f_{e,n+e}^0\otimes \dots\otimes f_{e,n+e(l-1)}^0\in L(\mu_e)^{\otimes l}$. 
Recall that $\pi_{\mu_{el}}: L(\mu_e)^{\otimes l}\to L({\mu_{el}})$ is the Cartan projection. 
We deduce from Lemma \ref{lem:shift} that the vector
\begin{equation}\label{eq:somevector}
	\pi_{\mu_{el}}(f_{e,n}^0\otimes f_{e,n+e}^0\otimes f_{e,n+e(l-1)}^0)=v_{\mu_l}+\mathrm{l.o.t.}
\end{equation}
is $\chi_{el}$-spherical. By the branching rules of Proposition \ref{prop:fieldextend} it follows that \eqref{eq:somevector} equals $f^0_{el,n}$ and that the projection on all the other isotypic components is zero. By \cite[Proposition 27.1.7 \& Theorem 27.3.2]{Lus10} this implies $f^0_{el,n}\in {_\mathcal AL(\mu_l)}$.
\end{proof}
\printbibliography
\Addresses
\end{document}